\newtheorem{thm}{Theorem}[section]
\newtheorem{cor}[thm]{Corollary}
\newtheorem{lem}[thm]{Lemma}
\newtheorem{keylem}[thm]{Key Lemma}
\newtheorem{prop}[thm]{Proposition}
                   \theoremstyle{definition}
\newtheorem{property}[thm]{Property}
\newtheorem{defn}[thm]{Definition}
\newcommand\blank{\mathord{\hbox to 1.5ex{\hrulefill}}\,}
\DeclareMathOperator\Ker{Ker}
\DeclareMathOperator\Max{Max}
\DeclareMathOperator\GL{GL}
\DeclareMathOperator\Hom{Hom}
\DeclareMathOperator\GG{G}
\DeclareMathOperator\E{E}
\DeclareMathOperator\SL{SL}
\DeclareMathOperator\Um{Um}
\DeclareMathOperator\K{K}
\DeclareMathOperator\rank{rank}
\DeclareMathOperator\EE{EE}
\newcommand\q{\mathfrak q}
\newcommand\Z{\mathbb Z}
\newcommand\N{\mathbb N}
\let\ph\varphi
\let\mf\mathfrak
\let\wt\widetilde
\newcommand\hlrarrow{\lhook \mkern-8mu\hbox to 28pt{\rightarrowfill}}
\newcommand\tensor{\mathbin{\overline\otimes}}
\journal{Journal of Algebra}
\begin{document}

\begin{frontmatter}
\title
{Structure of Chevalley groups over rings via universal localization}

\author[spbgu,leti]{Alexei Stepanov}
\ead{stepanov239@gmail.com}
\address[spbgu]{Dept. of Mathematics and Mechanics,
St.Petersburg State University,\\
Universitetsky 28, Peterhof,
198504, St. Petersburg, Russia}
\address[leti]
{St.Petersburg State Electrotechnical University,\\
Prof. Popova 5, 197376, St. Petersburg, Russia}

\begin{abstract}
In the current article we study the structure of a Chevalley group $G(R)$ over a
commutative ring $R$. We generalize and improve the following results on:
\begin{itemize}
\item
the standard, relative, and multi-relative commutator formulas;
\item
the nilpotent structure of [relative] $\K_1$;
\item
the bounded word length of commutators.
\end{itemize}
To this end we enlarge the elementary group, construct a generic element for the
extended elementary group, and use localization in the universal ring. The key
step is a construction of a generic element for the principal congruence
subgroup, corresponding to a principal ideal.
\end{abstract}

\begin{keyword}
{Chevalley group\sep
commutator formula\sep
elementary subgroup\sep
nilpotent structure\sep
word length\sep
principal congruence subgroup\sep
localization}
\MSC[2000] 20G35
\end{keyword}

\end{frontmatter}

\begin{center}
\itshape
Dedicated to my teacher and good friend, Nikolai Vavilov,\\ on his sixtieth birthday
\end{center}

\section*{Introduction}

In the current article we study the structure of a Chevalley group $G(R)$ over a commutative
ring $R$ with a reduced irreducible root system $\Phi$. We generalize and improve the following results:
\begin{itemize}
\item
the standard, relative, and multi-relative commutator formulas;
\item
the nilpotent structure of a [relative] $\K_1$;
\item
the bounded word length of commutators.
\end{itemize}
The results of the article were announced in~\cite{StepAnnounce}.

The standard commutator formulas for $G=\GL_n$ were proved by L.\,Vaser\-stein in~\cite{VasersteinGLn}
and independently by Z.\,Borewich and N.\,Vavilov in~\cite{BV84}.
In 1987 G.\,Taddei proved the normality of the elementary subgroup in a Chevalley
group. It was already known at that time how to deduce the standard commutator formulas from this
result using the double of a ring along an ideal and the splitting principle.
The relative commutator formula was proved in different settings and by different methods
in~\cite{VavStepCommForm0,VavStepCommForm,HazZhangCommForm,HVZrelachev}.
In~\cite{VavStepCommForm} it was shown that in most cases it follows from the
standard commutator formulas by pure group theoretical arguments.
However, the proof of multiple relative commutator formula for $G=\GL_n$
from~\cite{HazZhangMulti} used all the power of Bak's localization procedure.

In the current article for all simply connected Chevalley groups we prove that
\begin{align*}
&[\wt E(R,\mf a),G(R,\mf b)]\le\EE(R,\mf a,\mf b),\text{ where}\\
&\EE(R,\mf a,\mf b)=E(R,\mf a\mf b)[E(R,\mf a),E(R,\mf b)],
\end{align*}
$\mf a$ and $\mf b$ are ideals of $R$, and $\wt E$ denotes the extended elementary
subgroup defined in~\ref{defEtilde}.
The formula is shown to be stronger than all the commutator formulas mentioned above.
Note that if $\Phi\ne C_l,G_2$ or $2$ is invertible in $R$, then
$\EE(R,\mf a,\mf b)=[E(R,\mf a),E(R,\mf b)]$, and if $\mf a+\mf b=R$, then
$\EE(R,\mf a,\mf b)=E(R,\mf a\mf b)$.

\smallskip
Let $Y$ be a subset of an abstract group $H$ and let $\Sigma$ be a generating set of~$H$.
The \emph{width} (or word length) of $Y$ with respect to $\Sigma$ is the smallest integer
$L$ such that each element of $Y$ can be written as a product of at most $L$ generators
and their inverses. The width of an element $a$ is the width of one-point set $\{a\}$, the width
of the identity element equals $0$.

The width of the linear elementary group $\E_n(R)$ with respect to the set of
elementary transvections or to the set of all commutators was studied by D.\,Carter,
G.\,Keller, K.\,Dennis, L.\,Vaserstein,
W.~van der Kallen, O.\,Tavgen and others. This is related to computing the Kazhdan
constatant~\cite{Shalom}.
The width of $E(R)$ is known to be finite if $R$ is semilocal (by Gauss decomposition)
or $R=\mathbb O_S$, where $O_S$ is the ring of $S$-integers of
a number field and $\Phi\ne A_1$
(by D.\,Carter and G.\,Keller~\cite{CarterKellerZ} and O.\,Tavgen~\cite{TavgenChevalley}).
But it is infinite for $\E_n(\mathbb C[x])$ (W.~van der Kallen~\cite{vdkSL3}).
It is amazing that the answer is unknown
already for $\E_n(F[x])$, where $F$ is a finite field or a number field.

It turns out that finiteness of the width of the elementary subgroup $E(R)$ of a
Chevalley group with respect to the elementary root unipotents is equivalent to
finiteness of the width of $E(R)$ with respect to commutators
(provided that $\Phi\ne C_2,G_2$ or $R$ has no residue fields of $2$ elements, otherwise elementary group
is not necessarily perfect). This follows from joint results of the author with
A.\,Sivatski~\cite{SivStep} (for $G=\SL_n$) and with
N.\,Vavilov~\cite{StepVavLength} (for all simply connected Chevalley groups of rank $\ge 2$).
It is shown in these papers that the width of the set of all commutators is
finite with respect to the elementary root unipotents provided that
the maximal spectrum of the ground  ring $R$ has finite combinatorial
dimension. The width bound obtained in these articles depended on the dimension.

In the current article we prove that the width of a commutator $[a,b]$,
$a\in G(R,\mf a)$, $b\in\wt E(R,\mf b)$ with respect to an arbitrary functorial generating
set of $\EE(R,\mf a,\mf b)$ is bounded by a constant, depending only on the root system of $G$.

One of the motivations to study width of commutators is the following observations by
W.~van der Kallen: the group $\E_n(R)^\infty/E_n(R^\infty)$ is an obstruction
for the finiteness of the width of $\E_n(R)$, where infinite power means the direct
product of countably many copies of a ring or a group.
The bounded width of commutators implies that this group is central in
$K_1(R^\infty)$.

\smallskip
Recall that $K_1$-functor of a Chevalley group $G$ is defined by $K_1^G=G(R)/E(R)$.
If $G=\GL_n$ and we pass to the limit, we get the definition
$K_1(R)=\lim\limits_{n\to\infty}K_1^{\GL_n}(R)$  of usual stable $K_1$.
The group $K_1(R)$ is known to be abelian for an arbitrary (even noncommutative)
ring $R$, see~\cite{BassBook}. Nonstable group $K_1^G(R)$ can be nonabelian;
examples can be found in~\cite{vdkUniRows83,BakNonabelian}. But for a finite
dimensional Noetherian ring $R$ the group $K_1^G(R)$ is
nilpotent provided that $G$ is simply connected.
A nilpotent filtration was obtained by A.\,Bak in~\cite{BakNonabelian} for $G=\SL_n$ and
by R.\,Hazrat and N.\,Vavilov in~\cite{HazVav} for all Chevalley groups.
A relative version appeared in~\cite{BHVstrike} by the three authors.

We generalize all results on the nilpotent structure of $K_1^G$ mentioned above
by proving the following commutator formula:
$$
[G(R,\mf a_0),G(R,\mf a_1),\dots,G(R,\mf a_m)]\le\EE(R,\mf a_0\dots\mf a_{m-1},\mf a_m),
$$
where the Bass--Serre dimension of $R$ is not too big with respect to $m$.

\smallskip
All the results of the current article are proved uniformly, without
computations with individual elements of Chevalley groups.
The proof is based on a new version of localization method called universal
localization. The idea is fairly simple:
\begin{itemize}
\item
define an extended elementary subgroup;
\item
construct a generic element for this subgroup;
\item
use localization procedure in a versal ring (unlike ``universal'', the term ``versal'' does not require uniqueness);
\item
project the result to an arbitrary ring.
\end{itemize}
Since localization procedure is used only in a versal ring, we call the method
``universal localization''.

Our proofs are much easier than the previous ones.
One of the main concerns of the article is to show that one can avoid computations with
two denominators called by N.\,Vavilov the ``yoga of commutators''.
Of course, to get real bounds for the width of commutators
one needs some computations, but it seems that bounds obtained
via ``yoga of commutators'' developed in~\cite{StepVavLength,HSVZyoga,HVZrelachev,HSVZwidth}
are far from beeing optimal anyway.
From computational point of view the article is a direct continuation of~\cite{StepComput};
we prove all results (except Gauss decomposition) which are not proved
in~\cite{StepComput} beginning with the standard commutator formulas.
Thus, this branch of the theory of Chevalley groups over rings
(Suslin's local-global principle, the normality of elementary subgroup, the nilpotent structure
of $K_1$, bounded width of commutators, and the like)
requires only computation of the span
$\langle U_P(\mf a),U^-_P(\mf b)\rangle$ of the unipotent
radicals of two opposite parabolic subgroups over ideals $\mf a,\mf b$.
Note that all these results are independent of the root system $\Phi\ne A_1$ and invertibility
of structure constants.

Another branch is the normal structure, which requires elementary subgroup to be perfect
as well as computation of levels. Both results depend on the root system and invertibility of
structure constants. A key computation for this branch was made by M.\,Stein~\cite{SteinChevalley}
in~1971. In the next article we plan to prove the normal structure of Chevalley
groups using only Stein's computation and the standard commutator formulas.

\section{Notation}

\subsection*{Groups}
The identity element of a group will be denoted by $1$. However, if $G$ is a group scheme
and $R$ is a ring, then we denote by $e_R$ the identity element of the group $G(R)$.
Let $a,b,c$ be elements of a group $H$. By $a^b=b^{-1}ab$ we denote the element conjugate to
$a$ by $b$. The commutator $aba^{-1}b^{-1}$ is denoted by $[a,b]$.
Multi-commutators are left normed, i.\,e.
$\bigl[a_1,\dots,a_m\bigr]=\bigl[[a_1,\dots,a_{m-1}],a_m\bigr]$, where $a_1,\dots,a_m\in H$.
The following commutator identity will be used in the sequel.

\begin{lem}\label{CommId}
$[a,\prod\limits_{k=1}^l b_k]=\prod\limits_{k=1}^l[a,b_k]^{c_k}$, where
$c_k=\left(\prod\limits_{i=1}^{k-1}b_i\right)^{-1}$.
\end{lem}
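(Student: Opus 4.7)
The plan is a straightforward induction on $l$, resting on the two-factor commutator identity $[a,xy]=[a,x]\cdot[a,y]^{x^{-1}}$. That identity is verified by a one-line expansion: with the paper's convention $g^h=h^{-1}gh$, the right-hand side equals $axa^{-1}x^{-1}\cdot x\,aya^{-1}y^{-1}\,x^{-1}=axya^{-1}y^{-1}x^{-1}$, which is exactly $[a,xy]$.

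For the base case $l=1$, the indexing gives $c_1=1$ (empty product), so both sides reduce to $[a,b_1]$. For the inductive step, I would set $x=\prod_{i=1}^{l-1}b_i$ and $y=b_l$ and apply the two-factor identity to split
\[
\Bigl[a,\prod_{k=1}^l b_k\Bigr]=\Bigl[a,\prod_{k=1}^{l-1}b_k\Bigr]\cdot[a,b_l]^{x^{-1}}.
\]
The inductive hypothesis rewrites the first factor as $\prod_{k=1}^{l-1}[a,b_k]^{c_k}$, and since $x^{-1}=c_l$ by definition, the second factor is $[a,b_l]^{c_l}$. Concatenating yields the claim.

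There is really no obstacle here beyond bookkeeping with the conjugation convention: the convention $g^h=h^{-1}gh$ is what forces the conjugators $c_k$ in the statement to be the inverse $\left(\prod_{i<k}b_i\right)^{-1}$ rather than the prefix itself, and telescopes all the intermediate $b_1\cdots b_{k-1}$ factors correctly when the products on the right are multiplied out.
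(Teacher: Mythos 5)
Your proof is correct: the two-factor identity $[a,xy]=[a,x]\,[a,y]^{x^{-1}}$ is verified by direct expansion under the paper's conventions $[a,b]=aba^{-1}b^{-1}$, $g^h=h^{-1}gh$, and the induction with $x=\prod_{i<l}b_i$, $y=b_l$ goes through exactly as you describe, with $x^{-1}=c_l$ closing the step. The paper states Lemma~\ref{CommId} without proof, treating it as a standard commutator identity; your argument is precisely the natural one and there is nothing to compare.
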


Let $A$ and $B$ be subsets of $H$. By $A^B$ we denote the normal closure of $A$ by $B$, i.\,e
the smallest subgroup of $H$, containing $A$ and normalized by $B$.
The mixed commutator subgroup $[A,B]$ is a subgroup of $H$, generated by all commutators
$[a,b]$, $a\in A$ and $b\in B$. If $A=\{a\}$ is a singleton, then we drop braces and
write $[a,B]$ instead of $[\{a\},B]$. Mixed multi-commutator subgroups are left-normed.

\subsection*{Rings}
Throughout the article the term ``ring'' stands for a commutative ring with a unit
and all ring homomorphisms preserve the units.

Let $S$ be a multiplicative (i.\,e. multiplicatively closed) subset of a ring $R$.
By $S^{-1}R$ we denote the localization of $R$ with respect to $S$. The localization
homomorphism $R\to S^{-1}R$ is denoted by $\lambda_S$. If $S=\{s^k\mid k\in\N\}$,
then we write $\lambda_s:R\to R_s$ for the localization homomorphism.

A sequence $(s_1,\dots,s_m)$ of elements of $R$ is called unimodular
if it generates the unit ideal. The set of all unimodular sequences of length $m$ over $R$
is denoted by $\Um_m(R)$.

An ideal $\mf a$ of a ring $C$ is called a splitting ideal if $C=R\oplus\mf a$ as additive groups,
where $R$ is a subring of $C$. Of course, in this case $R\cong C/\mf a$. Equivalently,
$\mf a$ is a splitting ideal iff
it is a kernel of a retraction $C\to R\subseteq C$. For instance, if $C=R[t]$ is a polynomial ring,
then $tC$ is a splitting ideal. Another example of a splitting ideal is the fundamental
ideal $I$ of the affine algebra $A$ of a group scheme $G$.
Note that if $\mf a$ is a splitting ideal of a ring $C=R\oplus\mf a$,
then $\mf a\otimes_R C'$ is a splitting ideal of $C\otimes_R C'$ for any $R$-algebra $C'$.

Let $K$ be a ring and let $\ph:R\to R''$ and $\ph':R'\to R''$ be $K$-algebra homomorphisms.
Denote by $\iota:R\to R\otimes_KR'$ and $\iota':R'\to R\otimes_KR'$ the canonical homomorphisms.
By the universal property of tensor product there exists a unique $K$-algebra homomorphism
$\ph'':R\otimes_KR'\to R''$ such that $\ph=\ph''\circ\iota$ and $\ph'=\ph''\circ\iota'$.
This homomorphism will be denoted by $\ph\tensor_K\ph'$
to distinguish it from the natural homomorphism
$\ph\otimes_K\ph':R\otimes_KR'\to R''\otimes_KR''$.
Clearly, $\ph\tensor_K\ph'=\operatorname{mult}\circ(\ph\otimes_K\ph')$, where ``$\operatorname{mult}$''
is the multiplication homomorphism. In the case $K=\Z$ we write $\otimes$ and $\tensor$ instead of
$\otimes_K$ and $\tensor_K$, respectively.

\section{Group schemes}\label{schemes}

\textit{Throughout the article the expression ``group scheme'' means ``flat affine group scheme of finite type''}.
Let $K$ be a ring and $G$ an affine group scheme over $K$.
Denote by $A=K[G]$ the affine algebra of~$G$.
By definition of an affine scheme, an element $h\in G(R)$ can be identified with a ring
homomorphism $h:A\to R$. We always perform this identification.
Denote by $g\in G(A)$ the generic element of $G$, i.\,e. the identity map $A\to A$.
An element $h\in G(R)$ induces a group homomorphism
$G(h):G(A)\to G(R)$ by the rule $G(h)(a)=h\circ a$. Thus, the image of $g$
under $G(h)$ equals~$h$. In the sequel, for a ring homomorphism $\ph:R\to R'$ we denote by
the same symbol $\ph$ the induced homomorphism $G(\ph):G(R)\to G(R')$.
This cannot lead to a confusion as we always can distinguish between two different meanings
of $\ph$ by the type of its argument. With this convention we have
$h(g)=h\circ\operatorname{id}_A=h$.

It is easy to see that the identity element $e_K\in G(K)$ coincides with the counit map $A\to K$.
Let $I$ denote the fundamental ideal, i.\,e. the kernel of $e_K$. Then, $g\in G(A,I)=\Ker G(e_K)$.
Note that $e_K$ is a retraction of the structure map $K\to A$, hence $A=K\oplus I$ as
additive groups.
Let $\q$ be an ideal of a ring $R$ and $\rho_\q:R\to R/\q$ the reduction homomorphism.
The kernel of the induced map $\rho_\q:G(R)\to G(R/\q)$ is called the principal
congruence subgroup and is denoted by $G(R,\q)$. It is easy to see that $h\in G(R,\q)$
iff the following diagram commutes.
$$
\begin{CD}
A  @>h>>   R   \\
@V{e_K}VV  @VV{\rho_\q}V\\
K  @>>>    R/\q\\
\end{CD}
$$
And the latter is obviously equivalent to saying that $h(I)\subseteq\q$. In particular,
$\q=h(I)$ is the smallest ideal such that $h\in G(R,\q)$.

\begin{lem}\label{GG}
Let $\mf a$ and $\mf b$ be ideals of a ring $R$, Then
$[G(R,\mf a),G(R,\mf b)]\le G(R,\mf a\mf b)$.
\end{lem}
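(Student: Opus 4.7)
The plan is to work directly with the description of $G(R)$ as homomorphisms $A\to R$ and with the characterization $h\in G(R,\q)\iff h(I)\subseteq\q$ established just before the lemma. Since $[G(R,\mf a),G(R,\mf b)]$ is generated as a subgroup by the commutators $[h_1,h_2]$ with $h_1\in G(R,\mf a)$ and $h_2\in G(R,\mf b)$, and since $G(R,\mf a\mf b)$ is itself a subgroup, it suffices to show that each such commutator $c=[h_1,h_2]$, viewed as a ring homomorphism $A\to R$, satisfies $c(I)\subseteq\mf a\mf b$.

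To deal with the mixed-ideal target cleanly, I would first reduce modulo $\mf a\mf b$. The reduction $\rho\colon R\to R/\mf a\mf b$ sends $G(R,\mf a)$ into $G(R/\mf a\mf b,\bar{\mf a})$ and $G(R,\mf b)$ into $G(R/\mf a\mf b,\bar{\mf b})$, and in the quotient $\bar{\mf a}\cdot\bar{\mf b}=0$. So it is enough to prove the following special case: \emph{if $\mf a\mf b=0$, then the subgroups $G(R,\mf a)$ and $G(R,\mf b)$ commute element-wise.}

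For the key computation, I would use the decomposition $A=K\oplus I$ together with the counit identities $(\varepsilon\otimes\mathrm{id})\circ\Delta=(\mathrm{id}\otimes\varepsilon)\circ\Delta=\mathrm{id}_A$ to write, for every $a\in I$,
\[
\Delta(a)=a\otimes 1+1\otimes a+\delta(a),\qquad \delta(a)\in I\otimes_K I.
\]
The group law in $G(R)$ is $h_1h_2=(h_1\tensor_K h_2)\circ\Delta$, so
\[
(h_1h_2)(a)=h_1(a)+h_2(a)+(h_1\tensor_K h_2)(\delta(a)).
\]
The last summand lies in the product $h_1(I)\cdot h_2(I)\subseteq\mf a\mf b=0$. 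Exchanging the roles of $h_1$ and $h_2$ gives the same value for $(h_2h_1)(a)$, so $h_1h_2$ and $h_2h_1$ agree on $I$; they also agree on $K\subseteq A$ (both being unital $K$-algebra maps, they restrict to the structure map $K\to R$), hence on all of $A$. Thus $h_1h_2=h_2h_1$ in $G(R)$, which finishes the proof.

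The computation itself is routine once it is set up; the only step requiring conceptual thought is the reduction modulo $\mf a\mf b$, which converts the inclusion $c(I)\subseteq\mf a\mf b$ into a clean vanishing and lets one avoid tracking first- vs.~second-order error terms (in particular it avoids the spurious $\mf a^2$ and $\mf b^2$ contributions one would get from a naive four-fold $\Delta^{(3)}$ expansion of $[h_1,h_2]$). No assumption on $\Phi$, on $K$, or on structure constants is needed.
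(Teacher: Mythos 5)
Your proof is correct, and it takes a genuinely different route from the paper's. The paper works with the universal commutator: it embeds $A$ into $A\otimes_K A$ via the two coordinate inclusions, obtains generic elements $g_1\in G(A\otimes_K A,I\otimes_K A)$ and $g_2\in G(A\otimes_K A,A\otimes_K I)$, uses normality of principal congruence subgroups to place $[g_1,g_2]$ in the intersection $G(\cdot,I\otimes_K A)\cap G(\cdot,A\otimes_K I)=G(\cdot,I\otimes_K I)$ (the ideal identity $(I\otimes_K A)\cap(A\otimes_K I)=I\otimes_K I$ coming from $A=K\oplus I$), and then specializes by applying $a\tensor b$. You instead unpack the group law through the comultiplication, deriving from the counit axioms and the same splitting $A=K\oplus I$ the expansion $\Delta(a)=a\otimes1+1\otimes a+\delta(a)$ with $\delta(a)\in I\otimes_K I$, and you add the preliminary reduction modulo $\mf a\mf b$ to turn the target inclusion into a clean vanishing; commutativity then falls out by symmetry. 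Both arguments pivot on $A=K\oplus I$ and on $I\otimes_K I$ being the ``cross term,'' but the paper keeps the Hopf structure implicit behind the intersection-of-kernels computation and stays within the generic-element methodology used throughout the article, whereas yours is a self-contained Hopf-algebraic computation that makes the quadratic error term visible; your reduction mod $\mf a\mf b$ is an economical extra step that the paper does not need because specialization handles it automatically.
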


\begin{proof}
A proof for $G=\GL_n$ can be found in~\cite{VavStepCommForm0} and the
general case can be easily deduced from this. However, we prefer to give a direct proof
using the formalism of the current section.

Note that
$A\otimes_KA=(K\oplus I)\otimes_K(K\oplus I)=K\oplus I\otimes_KK\oplus K\otimes_KI\oplus I\otimes_KI$.
Therefore, $(A\otimes_KI)\cap (I\otimes_KA)=I\otimes_KI$.
There are two natural inclusions of the ring $A$ into $A\otimes_KA$. Denote by
$g_1$ and $g_2$ the images of the generic element $g\in G(A)$ in $G(A\otimes_KA)$
under these inclusions. Since, $g_1\in G(A\otimes_KA,I\otimes_KA)$ and
$g_2\in G(A\otimes_KA,A\otimes_KI)$, we have
\begin{multline*}
[g_1,g_2]\in G\bigl(A\otimes_KA,I\otimes_KA\bigr)\cap G\bigl(A\otimes_KA,A\otimes_KI\bigr)=\\
G\bigl(A\otimes_KA,(I\otimes_KA)\cap (A\otimes_KI)\bigr)=G\bigl(A\otimes_KA,I\otimes_KI\bigr).
\end{multline*}

Now, let $a\in G(R,\mf a)$ and $b\in G(R,\mf b)$. Then
the map $a\otimes b:A\otimes_KA\to R$ takes $I\otimes_KA$ into $\mf a$ and $A\otimes_KI$ into $\mf b$.
The induced group homomorphism $a\otimes b:G(A\otimes_KA)\to G(R)$ maps
$g_1$ to $a$ and $g_2$ to $b$. Therefore,
$$
[a,b]=(a\otimes b)\bigl([g_1,g_2]\bigr)\in (a\otimes b)\bigl(G(A\otimes_KA,I\otimes_KI)\bigr)\le
G(R,\mf a\mf b).
$$
\end{proof}

\section{Chevalley groups}
From now on $G$ denotes a Chevalley--Demazure group scheme over $\Z$
with a reduced irreducible root system $\Phi\ne A_1$.
Throughout the article we keep the notation of the previous section:
$A=\Z[G]$, $g\in G(A)$ is the generic element, and $I$ is the fundamental ideal of $A$.
Denote by $E$ the elementary subgroup (subfunctor) of $G$.
If $G$ is simply connected, then it follows from Gauss decomposition that
over a semilocal ring $R$ the group $E(R)$ coincides with $G(R)$, see~\cite[Corollary~2.4]{AbeSuzuki}
or~\cite{SmolSuryVav,SmolSuryVavBis} for a more general result.
From the very beginning we choose a split maximal torus $T$; all root subgroups $X_\alpha$ are
assumed to be $T$-invariant.

Let $\q$ be an ideal of a ring $R$.
By $E(\q)$ we denote the subgroup of $E(R)$, generated by $x_\alpha(r)$ for all
$\alpha\in\Phi$ and $r\in\q$. The relative elementary subgroup $E(R,\q)$ is the
normal closure of $E(\q)$ in $E(R)$.

The following result relates two groups defined above. It was first stated (without a proof)
by J.\,Tits in~\cite{TitsCongruence}; a proof appeared in the paper~\cite{VasersteinChevalley}
by L.\,Vaserstein. A more detailed proof using the same idea can be found in~\cite{StepComput}.
All computations with individual elements of a Chevalley group are hidden in this proof.
Dilation principle and normality of elementary subgroup follows easily from this result.
We state only a particular case $\q=tR$ which will be used in the sequel.

\begin{lem}\label{EE}
Let $t\in R$. Then $E(R,t^3R)\le E(tR)$.
\end{lem}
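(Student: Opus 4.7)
The plan is to apply the Chevalley commutator formula, exploiting the assumption $\Phi\ne A_1$ (so $\Phi$ has rank at least two) to rewrite an elementary root element with argument in $t^3R$ as a product whose factors are elementary root elements with arguments in $tR$.

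Since $E(R,t^3R)$ is by definition the normal closure of $E(t^3R)$ in $E(R)$, it suffices to prove $x_\alpha(t^3r)^y\in E(tR)$ for each $\alpha\in\Phi$, $r\in R$, $y\in E(R)$. The first, easy step is the case $y=1$. Since $\Phi$ has rank $\ge2$, for each $\alpha$ I can choose non-proportional roots $\beta,\gamma$ with $\alpha=\beta+\gamma$, and the Chevalley commutator formula yields
\[
[x_\beta(ta),x_\gamma(t^2b)]=x_\alpha(Nt^3ab)\cdot\prod_{\substack{i,j\ge1\\ (i,j)\ne(1,1)}}x_{i\beta+j\gamma}\bigl(N_{ij}\,t^{i+2j}a^ib^j\bigr),
\]
where every correction factor already lies in $E(tR)$ since $t^{i+2j}$ has positive exponent. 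Choosing $a,b\in R$ with $Nab=r$---or, if the structure constant $N$ is not a unit, passing to a versal ring in which a suitable denominator is inverted, in the spirit of the present paper---gives $x_\alpha(t^3r)\in E(tR)$.

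The second, delicate step is the conjugation. A naive induction on the length of $y$ as a product of elementary generators reduces matters to showing $x_\epsilon(tu)^{x_\delta(s)}\in E(tR)$; expanding $x_\epsilon(tu)^{x_\delta(s)}=x_\epsilon(tu)\cdot[x_\epsilon(-tu),x_\delta(-s)]$ via the Chevalley formula confirms this whenever $\delta\ne\pm\epsilon$, because each correction term $x_{i\epsilon+j\delta}(N_{ij}(-tu)^i(-s)^j)$ carries a positive power of $t$.

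The main obstacle, and the reason the exponent $3$ is needed, is the negative-root case $\delta=-\epsilon$: the $\SL_2$-commutator $[x_\epsilon(tu),x_{-\epsilon}(-s)]$ produces a torus element $h_\epsilon(1-tus)$ which is not a priori in $E(tR)$. My plan to bridge this is to go back to the first step and refine the factorisation of $x_\alpha(t^3r)$ \emph{depending on} the negative-root generator one wants to push past: for each such $\delta$, pick $\beta,\gamma$ lying in a common unipotent radical $U_P$ of a parabolic subgroup $P\supseteq X_\delta$, so that $x_\beta(ta),x_\gamma(t^2b)\in U_P(tR)\subseteq E(tR)$ is stable under conjugation by $X_\delta$; the two spare powers of $t$ guarantee that the output of the negative-root swap still lies in $E(tR)$. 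Carrying out this choice uniformly over all $(\alpha,\delta)$ and over all root systems is the combinatorial heart of Tits's lemma~\cite{TitsCongruence}, and its detailed execution is what \cite{VasersteinChevalley} and \cite{StepComput} hide behind this statement; I would cite that case-analysis rather than reproduce it.
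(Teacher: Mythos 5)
The paper itself does not prove Lemma~\ref{EE}: it cites Tits, Vaserstein, and~\cite{StepComput} and explicitly records that ``all computations with individual elements of a Chevalley group are hidden in this proof.'' Your sketch has the right overall shape --- reduce to conjugated generators $x_\alpha(t^3r)^y$, observe that the problematic generators of $E(R)$ to push past are the opposite-root ones, and resolve those inside a rank-$2$ subsystem --- and the final appeal to the combinatorial case analysis is therefore comparable in scope to what the paper does.

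There is, however, a genuine error in your first step. When the structure constant $N=N_{\beta,\gamma}$ is $\pm2$ or $\pm3$, you propose to ``pass to a versal ring in which a suitable denominator is inverted.'' That move cannot be made to work here: the lemma asserts an inclusion of subgroups of $E(R)$ for \emph{every} ring $R$, including rings in which $2$ or $3$ is a zero divisor or nilpotent. Proving the inclusion over a localization $R[1/N]$ (or over a versal ring where $N$ is inverted) does not yield the inclusion over $R$, because the localization map need not be injective and there is no specialization argument that recovers the non-localized statement; this is precisely the kind of ``result over the localized ring'' that the paper's dilation lemmas~\ref{exdilation} and~\ref{zerodiv} are designed to pull back, and no such mechanism is available for this lemma since it is itself one of the computational inputs to that machinery. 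The correct repair is to exploit the \emph{higher-order} terms of the Chevalley commutator formula, which come with unit coefficients $\pm1$ even when the leading coefficient is $\pm2$ or $\pm3$. For instance, in $C_2$ with $\beta=e_1-e_2$, $\gamma=2e_2$ one has
\[
[x_\beta(t),\,x_\gamma(tr)]\;=\;x_{e_1+e_2}(\pm t^2 r)\,x_{2e_1}(\pm t^3 r),
\]
where both coefficients are $\pm1$; since $x_{e_1+e_2}(\pm t^2 r)\in E(tR)$, this exhibits the long-root element $x_{2e_1}(\pm t^3 r)$ inside $E(tR)$ without inverting anything. A similar device works in $G_2$. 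Your decomposition as written only controls the $(1,1)$-term and silently discards the essential flexibility that these extra terms provide.

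Two smaller points. First, the commutator $[x_\epsilon(tu),x_{-\epsilon}(-s)]$ in $\SL_2$ is \emph{not} the torus element $h_\epsilon(1-tus)$; a direct $2\times2$ matrix computation shows it is a full unimodular matrix with off-diagonal entries $\mp u^2v$ and $\pm uv^2$. This does not affect the thrust of your argument --- the $\delta=-\epsilon$ case really is the obstruction --- but the stated formula is incorrect. Second, if all you want is the bare inclusion $x_\alpha(t^3r)\in E(tR)$ with $y=1$, that is a triviality ($t^3R\subseteq tR$); the commutator decomposition of step one is only useful because it is meant to be \emph{carried through} the conjugation, and it would help to state that intended invariant explicitly before invoking it in step two.
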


Another important prerequisite is the splitting principle. It seems that idea to use splitting belongs
to A.\,Suslin. However, in~\cite{SuslinSerreConj} he used another consequence of splitting than
what we use in the current article. The splitting principle stated below was published
in~\cite{AbeLGP} by E.\,Abe, in~\cite{PetrovStavrovaIsotropic} by
V.\,Petrov and A.\,Stavrova (for non-split groups), and
in~\cite{ApteChattRao} by H.\,Apte, P.\,Chattopadhyay, and R.\,Rao,
The following statement is the relative version of the splitting principle from~\cite{ApteStep}.

\begin{lem}\label{splitting}
Let $R$ be a subring of $C$ and $\mf a$ an ideal of $C$ such that $C=R\oplus\mf a$.
Let $\mf b'$ be an ideal of $R$ and $\mf b=\mf b'C$.
Then
$$
\E(\Phi,C,\mf b)\cap\GG(\Phi,C,\mf a)=E(C,\mf a\mf b)[E(C,\mf a),E(C,\mf b)].
$$

In particular, If $\mf b=C$, then $\E(\Phi,C)\cap\GG(\Phi,C,\mf a)=E(C,\mf a)$.
\end{lem}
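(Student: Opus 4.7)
The plan is to prove both inclusions. The containment $\supseteq$ is routine: $E(C,\mf a\mf b)\subseteq E(C,\mf b)$ and $[E(C,\mf a),E(C,\mf b)]\subseteq E(C,\mf b)$ by normality of $E(C,\mf b)$ in $E(C)$, while both generators sit inside $G(C,\mf a\mf b)\subseteq G(C,\mf a)$ by Lemma~\ref{GG}. Along the way I also record that $N:=E(C,\mf a\mf b)[E(C,\mf a),E(C,\mf b)]$ is normal in all of $E(C)$, since each of its two factors is ($E(C,\mf a\mf b)$ by definition, and the commutator subgroup because $[gAg^{-1},gBg^{-1}]=g[A,B]g^{-1}$ when $A$, $B$ are already $E(C)$-normal); moreover $\pi(N)=1$ for the splitting retraction $\pi\colon C\to R$, again by Lemma~\ref{GG}.

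To establish $\subseteq$ I first dispose of the absolute case $\mf b=C$ (the ``in particular'' clause), where $N$ collapses to $E(C,\mf a)$. Given $x\in E(C)\cap G(C,\mf a)$, expand $x=\prod_k x_{\alpha_k}(r_k)$ and split each $r_k=r_{k,0}+r_{k,1}$ according to $C=R\oplus\mf a$. Factoring $x_{\alpha_k}(r_k)=x_{\alpha_k}(r_{k,0})x_{\alpha_k}(r_{k,1})$ and shuttling every $x_{\alpha_k}(r_{k,1})\in E(\mf a)$ to the right via the identity $ba=a\,b^{a}$, the commutators that arise land in $[E(R),E(\mf a)]\subseteq E(C,\mf a)$. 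The result is $x=\pi(x)\cdot e$ with $e\in E(C,\mf a)$; since $\pi(x)=1$ by assumption, one concludes $x\in E(C,\mf a)=N$.

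Now I handle the general relative case. Take $x\in E(C,\mf b)\cap G(C,\mf a)$ and write $x=\prod_k y_k$ with $y_k=x_{\alpha_k}(r_k)^{g_k}$, $r_k\in\mf b$, $g_k\in E(C)$. From $\mf b=\mf b'C$ and $C=R\oplus\mf a$ one gets the decomposition $\mf b=\mf b'\oplus\mf a\mf b$ (using $\mf a\mf b=\mf b'\mf a$), so split $r_k=r_{k,0}+r_{k,1}$ with $r_{k,0}\in\mf b'$ and $r_{k,1}\in\mf a\mf b$, and further $g_k=\pi(g_k)\cdot h_k$ where $h_k:=\pi(g_k)^{-1}g_k\in E(C)\cap G(C,\mf a)$; by the absolute case just established, $h_k\in E(C,\mf a)$. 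Setting $u_k:=x_{\alpha_k}(r_{k,0})^{\pi(g_k)}\in E(R,\mf b')$, a direct computation gives $x_{\alpha_k}(r_{k,0})^{g_k}=u_k^{h_k}$, hence $x_{\alpha_k}(r_{k,0})^{g_k}u_k^{-1}=[h_k^{-1},u_k]\in[E(C,\mf a),E(C,\mf b)]\subseteq N$; combined with $x_{\alpha_k}(r_{k,1})^{g_k}\in E(C,\mf a\mf b)\subseteq N$ and the normality of $N$ in $E(C)$, this yields $y_k\equiv u_k\pmod{N}$. Therefore $x\equiv\prod_k u_k\pmod{N}$, and applying $\pi$ kills $N$ but fixes every $u_k$, forcing $1=\pi(x)=\prod_k u_k$ in $E(R,\mf b')$; hence $x\in N$.

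The main obstacle is controlling the conjugating factors $g_k\in E(C)$ appearing in the generators of $E(C,\mf b)$: I want to replace each $g_k$ by its projection $\pi(g_k)$ modulo $N$, but the error $h_k=\pi(g_k)^{-1}g_k$ is \emph{a priori} only known to lie in $E(C)\cap G(C,\mf a)$. This is exactly the obstacle removed by the absolute splitting principle, so settling that case first is both the logical and the computational crux; the remainder of the argument is a clean rearrangement of generators modulo the normal subgroup $N$.
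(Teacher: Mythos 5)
Your proof is correct. One important caveat before comparing: the paper does not actually prove Lemma~\ref{splitting} — it cites Abe, Petrov--Stavrova, Apte--Chattopadhyay--Rao, and (for this relative version) Apte--Stepanov. So there is no "paper's own proof" to match against, only external references; what follows assesses your argument on its own merits.

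Your argument is complete and self-contained. The $\supseteq$ direction is handled properly via Lemma~\ref{GG}, and you correctly record the normality of $N=E(C,\mf a\mf b)[E(C,\mf a),E(C,\mf b)]$ in $E(C)$ (the second factor is preserved under conjugation precisely because each factor of the commutator is already $E(C)$-normal) and the fact that $\pi(N)=1$ for the retraction $\pi\colon C\to R$. Establishing the absolute case first, by splitting $r_k=r_{k,0}+r_{k,1}$ along $C=R\oplus\mf a$ and collecting the $R$-parts to the left, is the standard and natural route: the errors land in $E(C,\mf a)$ by its very definition as a normal closure (so there is no circularity with the normality-of-$E$ theorem the section is building towards), and $\pi(x)=1$ kills the collected $E(R)$-part. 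For the relative case, your decomposition $\mf b=\mf b'\oplus\mf a\mf b$ (checked via $\mf a\mf b=\mf a\mf b'C=\mf b'\mf a$ and $R\cap\mf a=0$), the factorisation $g_k=\pi(g_k)h_k$ with $h_k\in E(C)\cap G(C,\mf a)=E(C,\mf a)$ by the absolute case, and the identity $u_k^{h_k}u_k^{-1}=[h_k^{-1},u_k]\in[E(C,\mf a),E(C,\mf b)]$ are all correct; together with $x_{\alpha_k}(r_{k,1})^{g_k}\in E(C,\mf a\mf b)$ and normality of $N$ this gives $x\equiv\prod u_k\pmod N$, and applying $\pi$ finishes it. This is essentially the argument one finds in the cited Apte--Stepanov source (it is the canonical one): there is no genuinely different decomposition or key lemma in play, just a careful execution of the collection-modulo-$N$ strategy, done in two stages so that the absolute case provides the control over $h_k$ that makes the relative stage go through cleanly.
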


For the group in the right hand side of the displayed formula above we introduce special notation.
Let $\mf a$ and $\mf b$ be arbitrary ideals of a ring $C$. Then
$$
\EE(C,\mf a,\mf b)=E(C,\mf a\mf b)[E(C,\mf a),E(C,\mf b)].
$$
If $\Phi\ne C_l,G_2$ or $2$ is invertible in $C$, then $E(C,\mf a\mf b)\le[E(C,\mf a),E(C,\mf b)]$
(see~\cite[subsection~1.2]{StepComput}),
hence $\EE(C,\mf a,\mf b)=[E(C,\mf a),E(C,\mf b)]$.

\section{Gauss decomposition}

Fix a split maximal torus $T$ in $G$ and a Borel subgroup, containing this torus.
Let $U$ and $U^-$ denote the unipotent radicals of this Borel subgroup and its opposite.
Denote by $W$ the Weyl group of the root system $\Phi$.
Let $N=N_G(T)$ be the scheme theoretic normalizer of $T$, i.\,e. the
largest subscheme of $G$ such that $N(R)$ normalizes $T(R)$ for any ring $R$
(it exists by~\cite[Exp.~11, Corollaire~5.3~bis]{SGA}).

Details for the following issue can be found in~\cite[II, 1.4 and I, 8.1-8.5]{Jantzen}.
The quotient $N/T$ is isomorphic to the finite group scheme associated to $W$, i.\,e.
$N/T(R)$ can be identified with a certain subset of the group algebra $RW$, containing $W$
(if there are no nontrivial idempotents in $R$, then $N/T(R)=W$).
An element $\dot w\in N(R)$ is called a representative of an element $w\in W$
if the canonical image of $\dot w$ in $N/T(R)$ equals $w$.
Clearly, any two representatives of $w\in W$ in $N(R)$ differ by an element from $T(R)$.
Therefore, the Gauss cell $\mf G_w=wTUU^-$ does not depend on a representative of $w$, and a given
element $a\in\mf G_w$ can be written as $a=\dot wuv$ for some
$\dot w\in N(R)$, $u\in U(R)$, and $v\in U^-(R)$.

\begin{lem}\label{Gauss}
There exist a unimodular set of elements $\{s_w\in A\mid w\in W\}$
such that $\lambda_{s_w}(g)\in w T(A_{s_w})\,U(A_{s_w})\,U^-(A_{s_w})$
for all $w\in W$.
\end{lem}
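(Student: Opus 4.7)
The plan is to exhibit, for each $w\in W$, an element $s_w\in A$ that cuts out the $w$-translate of the big cell $\Omega_w:=w\,T\,U\,U^-$ as a principal open subscheme of $G$, and then to prove unimodularity of $\{s_w\}_{w\in W}$ via the Bruhat decomposition. The big cell $\Omega_e=T\,U\,U^-$ is the image of the open immersion $U\times T\times U^-\to G$, so is cut out by a single function $s_e\in A$; for concreteness one may take $s_e$ to be the product, over the fundamental representations $V_i$, of the matrix coefficient $g\mapsto\langle v_i^*,g\,v_i\rangle$ on a lowest weight vector $v_i$ of $V_i$. Fixing for each $w\in W$ a Weyl lift $\dot w\in G(\Z)$ (built from the Chevalley generators) and setting $s_w:=s_e\circ\ell_{\dot w^{-1}}\in A$ (where $\ell_h$ denotes left translation by $h$), we obtain $D(s_w)=\dot w\cdot\Omega_e=\Omega_w$; since $s_w$ is invertible in $A_{s_w}$, this yields $\lambda_{s_w}(g)\in\Omega_w(A_{s_w})=\dot w\,T(A_{s_w})\,U(A_{s_w})\,U^-(A_{s_w})$, which is the desired conclusion.

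The remaining task is to prove that $\{s_w\}_{w\in W}$ generates the unit ideal of $A$. Since the closed subscheme $\bigcap_w V(s_w)\subseteq G$ is cut out by the $s_w$, its emptiness can be checked on $k$-points for every field $k$. Given $g\in G(k)$, Bruhat decomposition in the form $G(k)=\bigsqcup_{w\in W}T(k)\,U(k)\,\dot w\,U^-(k)$ (equivalently $G=\bigsqcup_w B\dot wB^-$) yields $g=tu\dot wu^-$ for some $w$. The standard factorization $\dot w^{-1}U\dot w=(\dot w^{-1}U\dot w\cap U)\cdot(\dot w^{-1}U\dot w\cap U^-)$, reflecting the partition of $\Phi^+$ according to the inversion set of $w$, lets us write $\dot w^{-1}u\dot w=u^+u^-_w$ with $u^+\in U$ and $u^-_w\in U^-$. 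Substituting and pushing $t$ across $\dot w$ gives $g=\dot w\cdot(\dot w^{-1}t\dot w)\cdot u^+\cdot(u^-_wu^-)\in\dot w\,T(k)\,U(k)\,U^-(k)=\Omega_w(k)$, so $s_w(g)\ne0$ as required.

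The main obstacle is precisely this last manipulation: turning the disjoint Bruhat stratification $G=\bigsqcup_w T\,U\,\dot w\,U^-$ into the open cover $G=\bigcup_w\dot w\,T\,U\,U^-$ by translates of the big cell, which rests on the root-subgroup factorization of $\dot w^{-1}U\dot w$. Once this covering is established, passing between principal open subschemes $D(s_w)$, the open cover $G=\bigcup_w\Omega_w$, and the unimodularity of $\{s_w\}$ in $A$ is a routine scheme-theoretic argument, and the identification of $s_e$ with a product of extremal matrix coefficients is standard.
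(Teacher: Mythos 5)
Your proof is correct and follows the same overall plan as the paper's: realize each Gauss cell as a principal open of $G$, show that the Gauss cells cover $G$, and read off unimodularity of $\{s_w\}$ from the cover. Where the paper simply cites Jantzen~[II, 1.9] both for the fact that the big cell is a principal open and for the fact that its $W$-translates cover $G$, you supply the covering argument yourself: reduce to field-valued points (which suffices, since failure of unimodularity would produce a residue field $k$ of $A$ in which every $s_w$ vanishes) and invoke the Birkhoff-type decomposition $G(k)=\bigsqcup_w B(k)\dot w B^-(k)$ together with the root-subgroup factorization $\dot w^{-1}U\dot w=(\dot w^{-1}U\dot w\cap U)(\dot w^{-1}U\dot w\cap U^-)$. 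That is the same mechanism underlying Jantzen's statement, so the two proofs are essentially identical at bottom; yours is merely more self-contained on the covering step. One small caveat: your explicit choice of $s_e$ as a product of extremal matrix coefficients of the fundamental representations presupposes that those representations factor through $G$, i.e.\ that $G$ is simply connected, whereas the paper states the lemma for an arbitrary Chevalley--Demazure group. This is harmless, since your argument only needs the existence of \emph{some} $s_e$ cutting out the big cell -- which is exactly the part of Jantzen's result that the paper quotes -- but it is worth flagging that the concrete recipe for $s_e$ should not be taken as part of the proof in the non-simply-connected case.
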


\begin{proof}
By~\cite[II, 1.9]{Jantzen}
the main cell $TUU^-$ is a principal open subscheme and the Gauss cells
$\mf G_w=w TUU^-$ form an open cover of $G$ as $w$ ranges over $W$.
Since a Gauss cell is a shift of the main cell, all of them are
principal open subschemes, i.\,e. $\mf G_w$ is an affine scheme with
affine algebra $\Z[\mf G_w]=A_{s_w}$ for some $s_w\in A$.
By the definition of an open cover~\cite[I, 1.7(5)]{Jantzen} the set
$\{s_w\mid w\in W\}$ is unimodular.

The image $\lambda_{s_w}(g)$ of $g$ in $G(A_{s_w})$ is the localization homomorphism
$\lambda_{s_w}$ itself. This is a tautology that it factors through $\lambda_{s_w}$ and
this means that $\lambda_{s_w}(g)\in\mf G_w(A_{s_w})$, i.\,e. we have
the required factorization for $\lambda_{s_w}(g)$.
\end{proof}

Of course, being an open cover does not mean that the union of the sets of points of all
Gauss cells coincides with the group of points of the scheme $G$.
If $R$ is a ring, then an element
$a\in G(R)$ lies in $\mf G_w(R)$ iff $a(s_w)$ is invertible in $R$.
Thus, we have neither existence nor uniqueness of a Gauss cell, containing a given element.

\begin{lem}\label{e}
Suppose that $R$ is a nontrivial ring.
The identity element $e_R$ of the group $G(R)$ belongs to exactly one Gauss cell
$\mf G_1$ (here 1 is the identity element of $W$).
\end{lem}

\begin{proof}
Clearly $e_R\in\mf G_1$. Suppose that $e_R$ belongs to another Gauss cell $\mathfrak G_w$.
Choose a residue field $F$ of $R$. Then $e_F$
lies in $\mf G_w(F)=\dot w \mf G_1(F)$, so ${\dot w}^{-1}$ lies in $\mathfrak G_1(F)$ and $w=1$
by uniqueness of Bruhat decomposition in $G(F)$.
\end{proof}

Decomposition of $a\in\mf G_w(R)$ as a product $a=\dot w uv$, where
$\dot w$ is an appropriate preimage of $w$ in $N(R)$, $u\in U(R)$, and $v\in U^-(R)$, is unique.
This implies the following useful property of the main cell $\mf G_1$.

\begin{lem}\label{BigCell}
Let $\q$ be an ideal of a ring $R$. Then $\mf G_1(R)\cap G(R,\q)=T(R,\q)U(R,\q)U^-(R,\q)$.
\end{lem}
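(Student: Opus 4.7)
The plan is to use the uniqueness of the Gauss decomposition on the big cell (noted in the paragraph preceding the statement) together with functoriality of the decomposition under the reduction homomorphism $\rho_\q:R\to R/\q$.

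The reverse inclusion $T(R,\q)U(R,\q)U^-(R,\q)\subseteq\mf G_1(R)\cap G(R,\q)$ is essentially immediate. Each factor is contained in the respective subscheme $T,U,U^-$ of $\mf G_1=TUU^-$, so every product $huv$ of this shape lies in $\mf G_1(R)$. Moreover $T(R,\q)$, $U(R,\q)$, $U^-(R,\q)$ are all contained in $G(R,\q)$ (each is, by definition, the kernel of the reduction on the corresponding subfunctor, hence a subgroup of $\ker\rho_\q$ on $G(R)$), so their product also lies in $G(R,\q)$.

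For the forward inclusion, take $a\in\mf G_1(R)\cap G(R,\q)$. By the uniqueness of decomposition on the big cell recalled just before the statement, there exist unique $h\in T(R)$, $u\in U(R)$, $v\in U^-(R)$ with $a=huv$. Apply the induced map $\rho_\q:G(R)\to G(R/\q)$. Since $\rho_\q$ is a group scheme morphism, it respects the product decomposition, giving
$$
\rho_\q(a)=\rho_\q(h)\,\rho_\q(u)\,\rho_\q(v)\in T(R/\q)U(R/\q)U^-(R/\q)=\mf G_1(R/\q).
$$
On the other hand $a\in G(R,\q)$ means $\rho_\q(a)=e_{R/\q}$, and of course $e_{R/\q}=e_{R/\q}\cdot e_{R/\q}\cdot e_{R/\q}$ is another factorization of the same element in $\mf G_1(R/\q)$. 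Invoking the uniqueness of Gauss decomposition over the ring $R/\q$ forces $\rho_\q(h)=\rho_\q(u)=\rho_\q(v)=e_{R/\q}$, i.e.\ $h\in T(R,\q)$, $u\in U(R,\q)$, and $v\in U^-(R,\q)$.

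The only point that requires any attention is the legitimacy of applying uniqueness of Gauss decomposition over both $R$ and $R/\q$; this is exactly what the discussion preceding the statement grants, since the big cell is a principal open subscheme, so uniqueness holds over every ring where the decomposition exists. No calculation with root subgroups is needed.
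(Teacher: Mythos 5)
Your proof is correct and follows essentially the same route as the paper's: decompose $a=huv$, apply $\rho_\q$, and conclude each factor lies in the congruence subgroup of the corresponding factor. The only cosmetic difference is that you invoke uniqueness of the Gauss decomposition over $R/\q$ directly, whereas the paper derives the same conclusion by observing that $TU\cap U^-$ and $T\cap U$ are trivial; these are two phrasings of the same fact.
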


\begin{proof}
Let $a=huv\in G(R,\q)$, where $h\in T(R)$, $u\in U(R)$, and $v\in U^-(R)$.
Then $\rho_\q(huv)=e$, hence $\rho_\q(hu)=\rho_\q(v)^{-1}$. Since
$TU\cap U^-$ and $T\cap U$ both are trivial, we have $\rho_\q(h)=\rho_\q(u)=\rho_\q(v)^{-1}=e$,
i.\,e. $h\in T(R,\q)$, $u\in U(R,\q)$, and $v\in U^-(R,\q)$ as required.
\end{proof}

If $G$ is simply connected, then its torus is generated by the images of the coroots,
see~\cite[II, 1.6(4) and 1.3]{Jantzen}.
Since $T_{\SL_2}(R,\q)\le\E_2(R,\q)$ (this is the simplest case of the Whitehead lemma),
we have $T(R,\q)\le E(R,\q)$ for an arbitrary simply connected Chevalley--Demazure group
scheme~$G$.

\begin{cor}\label{radical}
Suppose that an ideal $\q$ of a ring $R$ is contained in the Jacobson radical of $R$.
If $G$ is simply connected, then $G(R,\q)\le E(R,\q)$.
\end{cor}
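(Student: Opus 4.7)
The plan is to deduce this from the Gauss decomposition machinery already in place. Given $a\in G(R,\q)$ regarded as a homomorphism $a\colon A\to R$, the first step is to show that $a(s_1)\in R^*$, where $s_1\in A$ is the element attached by Lemma~\ref{Gauss} to the big cell $\mf G_1=TUU^-$. Since $a\in G(R,\q)$ means $\rho_\q\circ a=e_{R/\q}$, this composition factors through the counit $e_K\colon A\to\Z$, and $e_K(s_1)$ is a unit in $\Z$ because the identity element lies in $\mf G_1(\Z)$; hence $\rho_\q(a(s_1))$ is a unit in $R/\q$. The hypothesis $\q\subseteq\mathrm{rad}(R)$ then promotes this to $a(s_1)\in R^*$ by the usual one-line Nakayama argument: if $a(s_1)y=1-q$ with $q\in\q$, then $1-q$ is a unit, so $a(s_1)$ is as well.

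Once $a(s_1)$ is invertible in $R$, the universal property of localization yields a unique $\bar a\colon A_{s_1}\to R$ with $a=\bar a\circ\lambda_{s_1}$. Applying $G$ and using Lemma~\ref{Gauss} at $w=1$ gives $a=\bar a\bigl(\lambda_{s_1}(g)\bigr)\in T(R)U(R)U^-(R)=\mf G_1(R)$. Lemma~\ref{BigCell} now furnishes a factorization $a=huv$ with $h\in T(R,\q)$, $u\in U(R,\q)$, and $v\in U^-(R,\q)$.

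It remains to check that each of these factors lies in $E(R,\q)$. Because $U$ and $U^-$ are direct products of their root subgroups in any fixed order, the congruence subgroups $U(R,\q)$ and $U^-(R,\q)$ are generated by elements $x_\alpha(r)$ with $r\in\q$, so they sit inside $E(R,\q)$. For the toral factor, the remark preceding the corollary already records that in the simply connected case $T(R,\q)\le E(R,\q)$ via reduction to the $\SL_2$ Whitehead lemma. Combining yields $a\in E(R,\q)$. The only nontrivial input in the whole argument is the Nakayama-style step upgrading $a(s_1)$ from a unit modulo $\q$ to a unit in $R$: this is the sole place where the Jacobson radical hypothesis enters, and it is the one spot where I would expect any subtlety to lie.
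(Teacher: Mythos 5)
Your proof is correct and follows the same route as the paper's: reduce modulo the Jacobson radical to show $a(s_1)$ is a unit, place $a$ in the big cell via Lemma~\ref{Gauss}, apply Lemma~\ref{BigCell} with the ideal $\q$, and invoke $T(R,\q)\le E(R,\q)$ for $G$ simply connected. The only cosmetic difference is that you reduce modulo $\q$ and factor through the counit to see $\rho_\q(a(s_1))$ is a unit before lifting along $\q\subseteq\mathrm{rad}(R)$, whereas the paper reduces modulo $J$ directly; both are the same argument.
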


\begin{proof}
Denote by $J$ the Jacobson radical of $R$.
If $a\in G(R,J)$, then $\rho_J(a)=e\in\mf G_1(R/J)$. It follows that $\rho_J(a)(s_1)=\rho_J\bigl(a(s_1)\bigr)$
is invertible in $R/J$, hence $a(s_1)$ is invertible in $R$, which implies that
$a\in\mf G_1(R)$. By the previous lemma $a\in T(R,\q)U(R,\q)U^-(R,\q)$. Since $G$ is simply connected,
we have $a\in E(R,\q)$.
\end{proof}

\begin{cor}\label{sc-loc}
If $G$ is simply connected, then there exists a unimodular set of elements $\{s_1,\dots,s_l\}$
such that $\lambda_{s_k}(g)\in E(A_{s_k},I_{s_k})$ for all $k=1,\dots,l$.
\end{cor}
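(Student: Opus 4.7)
The natural starting point is the unimodular set $\{s_w\mid w\in W\}$ produced by Lemma~\ref{Gauss}; I would re-index it as $s_1,\dots,s_l$ with $l=|W|$ and $s_1$ corresponding to the trivial element of $W$. For each $w$ we already know $\lambda_{s_w}(g)\in wT(A_{s_w})U(A_{s_w})U^-(A_{s_w})$, so the remaining task is to upgrade this membership to $\lambda_{s_w}(g)\in E(A_{s_w},I_{s_w})$. The argument then splits naturally into the two cases $w=1$ and $w\ne 1$, which, somewhat surprisingly, are handled by opposite tactics.

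For the trivial Weyl element I would combine Lemma~\ref{Gauss} with the observation that $g\in G(A,I)$ forces $\lambda_{s_1}(g)\in G(A_{s_1},I_{s_1})$, and then invoke Lemma~\ref{BigCell} to refine the decomposition to $\lambda_{s_1}(g)=huv$ with $h\in T(A_{s_1},I_{s_1})$, $u\in U(A_{s_1},I_{s_1})$, and $v\in U^-(A_{s_1},I_{s_1})$. The unipotent factors belong to $E(A_{s_1},I_{s_1})$ by definition, and the torus factor does so by the simply-connected ingredient $T(R,\q)\le E(R,\q)$ cited just before Corollary~\ref{radical}. For $w\ne 1$ I would argue in the opposite direction: the identity $e_\Z$ lies in $\mf G_1$ but not in $\mf G_w$, since otherwise $w^{-1}\in TUU^-=\mf G_1$, forcing $w=1$. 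Hence $s_w(e_\Z)=0$, i.\,e.\ $s_w\in I$, and inverting $s_w$ therefore inverts an element of $I$, so $I_{s_w}=A_{s_w}$ and the relative elementary subgroup degenerates to $E(A_{s_w},I_{s_w})=E(A_{s_w})$. Writing $\lambda_{s_w}(g)=\tilde w h u v$, a Weyl-group representative $\tilde w$ can be chosen inside $E(\Z)$ as a product of $x_{\pm\alpha}(\pm 1)$, the torus factor lies in $T(A_{s_w})\le E(A_{s_w})$ by the same simply-connected input applied with $\q=A_{s_w}$, and $u,v$ sit in $E(A_{s_w})$ by definition.

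The only substantive ingredient is the simply-connected hypothesis, which enters in both branches via the fact that the torus of a simply connected Chevalley group is generated by the images of the fundamental $\SL_2$-tori; membership of torus elements in the (relative) elementary subgroup then reduces to the Whitehead-lemma identity recalled just before Corollary~\ref{radical}. The conceptual content of the corollary is really the dichotomy ``$s_1\notin I$ but $s_w\in I$ for $w\ne 1$'', which is what lets the two cases be handled by complementary arguments; beyond that, the proof is bookkeeping glued together from Lemmas~\ref{Gauss} and~\ref{BigCell}, and I do not anticipate any real obstacle.
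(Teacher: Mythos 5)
Your overall structure matches the paper's proof (re-index the set from Lemma~\ref{Gauss}, handle $w=1$ via Lemma~\ref{BigCell}, reduce $w\ne1$ to showing $s_w\in I$ so that $E(A_{s_w},I_{s_w})=E(A_{s_w})$, and use that Weyl representatives and the torus lie in $E$). But the step that derives $s_w\in I$ for $w\ne1$ has a genuine gap. You argue that $e_\Z\notin\mf G_w(\Z)$ for $w\ne1$, and conclude ``hence $s_w(e_\Z)=0$''. This inference does not hold: $e_\Z\notin\mf G_w(\Z)$ only says that $e_\Z(s_w)$ is \emph{not invertible} in $\Z$, i.e.\ $e_\Z(s_w)\ne\pm1$, which is far from $e_\Z(s_w)=0$. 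Nothing in Lemma~\ref{Gauss} rules out $e_\Z(s_w)$ being, say, $2$; the unimodularity of $\{s_w\}$ is already saturated by $s_1$ since $e_\Z(s_1)$ is a unit, and puts no further constraint on $e_\Z(s_w)$ for $w\ne1$.

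The paper avoids this by running the argument over $A_r/I_r$ with $r=s_w$ rather than over $\Z$: one has $\rho_{I_r}(\lambda_r(g))=\lambda_r(\rho_I(g))=e\in\mf G_w(A_r/I_r)$, and since the identity lies in $\mf G_w(R)$ only when $w=1$ or $R$ is the zero ring, $w\ne1$ forces $A_r/I_r=0$, i.e.\ $I_r=A_r$; as $A/I\cong\Z$ is reduced, this is equivalent to $s_w\in I$. An alternative that salvages your line of thought is to apply the same fact over every residue field $\mathbb F_p$ instead of over $\Z$: over a field, ``not a unit'' does mean ``zero'', so $p\mid e_\Z(s_w)$ for all primes $p$, whence $e_\Z(s_w)=0$. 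Either way, the precise mechanism that turns ``$e$ avoids $\mf G_w$'' into ``$s_w\in I$'' is exactly what is missing in your write-up; the rest of the argument (Bruhat-type uniqueness forcing $w=1$, the $w=1$ case via Lemma~\ref{BigCell}, and the use of $T\le E$ for simply connected $G$) is correct and in line with the paper.
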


\begin{proof}
Take the unimodular set from Lemma~\ref{Gauss} and let $l=\#W$.
There exists a representative of $w$ in $E(A_{s_w})$.
Since $G$ is simply connected, $T\le E$. Therefore all Gauss cells lie inside
the elementary group.

Fix $w\in W$ and let $r=s_w$. Then
$\rho_{I_r}\bigl(\lambda_r(g)\bigr)=\lambda_r\bigl(\rho_I(g)\bigr)=e\in\mf  G_w(A_r/I_r)$.
By Lemma~\ref{e} $e_R$ lies in a Gauss cell $\mf  G_w(R)$ iff $w=1$ or $R$ is the trivial
ring. It follows that $I_r=A_r$ iff $w\ne 1$, i.\,e. $s_w\in I$ for all $w\ne1$.
In particular, $\lambda_{s_w}(g)\in E(A_{s_w},I_{s_w})$ for all $w\ne1$.
For $w=1$ the result follows from Lemma~\ref{BigCell}.
\end{proof}

Normality of the elementary subgroup follows easily from Corollary~\ref{sc-loc}
and Lemma~\ref{EE}, see~\cite{StepComput}.

\section{Standard commutator formulas}

For the results proved in the current article we do not need the elementary subgroup $E(R)$ to be perfect,
therefore we state the standard commutator formulas only as inclusions.
These inclusions turn into equalities if $\Phi\ne C_2,G_2$ or $R$ has no residue fields of two
elements (see~\cite[Corollary 4.4]{SteinChevalley}.
Simultaneously with the commutator formulas we prove that a certain set of commutators
has finite width with respect to an arbitrary functorial generating set, although this result will be a
special case of Theorem~\ref{CommForm}. This is done to illustrate advantages of working with
``generic example'' at the very beginning.

In this section $G$ is a Chevalley--Demazure group scheme, not necessarily simply connected.

\begin{prop}\label{stcommform}
Let $\q$ be an ideal of a ring $R$. Then
$$
[E(R,\q),G(R)]\le E(R,\q)\qquad\text{and}\qquad [E(R),G(R,\q)]\le E(R,\q).
$$
\end{prop}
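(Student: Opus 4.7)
My plan is to deduce both inclusions simultaneously from the normality of $E(R)$ in $G(R)$ (taken as known; see the remark at the end of Section~4) together with the splitting principle (Lemma~\ref{splitting}), using the double of $R$ along $\q$ to realize $\q$ as a splitting ideal in an ambient ring.

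\emph{Setup and core step.} Let $D=R\times_{R/\q}R=\{(r_1,r_2)\in R\times R\mid r_1\equiv r_2\pmod\q\}$, with projections $p_1,p_2:D\to R$ and diagonal section $\Delta:R\to D$, $r\mapsto(r,r)$. Since $p_2\circ\Delta=\operatorname{id}_R$, the ideal $\mf a=\ker p_2=\{(q,0):q\in\q\}$ is a splitting ideal of $D$ over $\Delta(R)\cong R$, and $p_1$ identifies $\mf a$ with $\q$, hence carries $E(D,\mf a)$ into $E(R,\q)$. Lemma~\ref{splitting} applied with $\mf b'=R$, $\mf b=D$ gives $E(D)\cap G(D,\mf a)=E(D,\mf a)$. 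Both $E(D)$ (by normality of the elementary subgroup) and $G(D,\mf a)$ (as a principal congruence subgroup) are normal in $G(D)$, hence so is their intersection. This yields the two master inclusions
\[
[E(D),G(D,\mf a)]\le E(D,\mf a)\qquad\text{and}\qquad [E(D,\mf a),G(D)]\le E(D,\mf a).
\]

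\emph{Second inclusion.} For $\xi\in E(R)$ and $a\in G(R,\q)$, the condition $a\in G(R,\q)$ means $a$ and $e_R$ agree modulo $\q$, so the pair $(a,e_R):A\to R\times R$ factors through $D$ and defines an element of $G(D)$; the equality $p_2(a,e_R)=e_R$ places it in $G(D,\mf a)$. Together with $\Delta(\xi)\in E(D)$, the first master inclusion gives
\[
[\Delta(\xi),(a,e_R)]=\bigl([\xi,a],e_R\bigr)\in E(D,\mf a),
\]
and $p_1$ delivers $[\xi,a]\in E(R,\q)$.

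\emph{First inclusion.} For $\xi\in E(R,\q)$ and $g\in G(R)$, write $\xi$ as a product of $E(R)$-conjugates of elementary generators $x_\alpha(q)$, $q\in\q$; lifting each $x_\alpha(q)$ to $x_\alpha((q,0))\in E(D)$ and each conjugator to $\Delta(\cdot)\in E(D)$ exhibits $(\xi,e_R)\in E(D)\cap G(D,\mf a)=E(D,\mf a)$. With $\Delta(g)\in G(D)$, the second master inclusion gives $[(\xi,e_R),\Delta(g)]=([\xi,g],e_R)\in E(D,\mf a)$, and $p_1$ delivers $[\xi,g]\in E(R,\q)$. Once the core step is in place the rest is entirely formal; the only care required is to verify that each lift $(a,e_R)$, $(\xi,e_R)$, $\Delta(\xi)$, $\Delta(g)$ does land in the claimed subgroup of $G(D)$, which in every case is a direct consequence of the congruence built into the definition of $D$.
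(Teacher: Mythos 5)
Your proof is correct, but it takes a genuinely different route from the paper's. You use the classical double $D=R\times_{R/\q}R$ (which the introduction explicitly acknowledges as the ``already known'' way of deducing the standard commutator formulas from normality of $E$), realize $\q$ as a splitting ideal $\mf a=\ker p_2$ in $D$, combine normality of $E(D)$ with normality of the congruence subgroup $G(D,\mf a)$ to get the master inclusions, and then pull back along $p_1$. The bookkeeping checks out: $(a,e_R)$ lies in $G(D)$ precisely because $a\in G(R,\q)$ means $a$ and $e_R$ agree modulo $\q$; $(\xi,e_R)$ lies in $E(D,\mf a)$ because the generators of $E(R,\q)$ lift componentwise to $E(D)$-conjugates of $x_\alpha((q,0))$ with $(q,0)\in\mf a$; and the splitting principle with $\mf b=D$ gives exactly $E(D)\cap G(D,\mf a)=E(D,\mf a)$.

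The paper's own proof instead works with the generic element $g\in G(A)$ of the Chevalley scheme and an independent variable $t$: it shows that $h=[x_\alpha(t),g]$ lies in $E(A[t],tA[t])\cap E(A[t],I[t])$, using normality and the splitting principle applied to the two splitting ideals $tA[t]$ and $I[t]$, and then specializes $t\mapsto r$ and $g\mapsto b$. What that buys, and what your argument does not directly give, is the next corollary (Corollary~\ref{length0}): because every commutator $[x_\alpha(r),b]$ is the image of the \emph{single} element $h$ under a morphism in the category of ideals, its word length with respect to any functorial generating set is uniformly bounded by the word length of $h$. Your double argument proves the inclusion ring-by-ring without producing such a generic witness, so it would need to be re-engineered to feed into the word-length results that are a central theme of the paper. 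As a self-contained proof of the proposition alone, though, your argument is valid and arguably more elementary.
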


\begin{proof}
Let  $t$ be an independent variable. For $\alpha\in\Phi$ consider
the element $h=[x_\alpha(t),g]\in G(A[t])$. Since the elementary subgroup is normal, we have
$h\in E(A[t])$. On the other hand, $h$ vanishes modulo $t$ and modulo $I[t]$.
Since both $tA[t]$ and $I[t]$ are splitting ideals, it follows from splitting principle~\ref{splitting}
that $h\in E(A[t])\cap G(A[t],tA[t])=E(A[t],tA[t])$ and $h\in E(A[t])\cap G(A[t],I[t])=E(A[t],I[t])$.

Now, let $a=[x_\alpha(r),b]$ for some $r\in R$ and $b\in G(R)$.
Let $\ph:A[t]\to R$ be the (unique) extension of $b:A\to R$, sending $t$ to $r$.
We have $\ph(g)=b(g)=b$, hence $\ph(h)=a$. If $r\in\q$, then
$a=\ph(h)\in\ph\bigl(E(A[t],tA[t])\bigr)=E(R,\q)$. Similarly, if $b\in G(R,\q)$,
then $b(I)\subseteq\q$ and $a=\ph(h)\in\ph\bigl(E(A[t],I[t])\bigr)\le E(R,\q)$.
\end{proof}

To define the notion of functorial generating set of $E(R,\q)$ we define the category of ideals.
Objects of this category are pairs $(R,\q)$, where $\q$ is an ideal of a ring $R$.
A morphism $\theta:(R,\q)\to(R',\q')$ is a ring homomorphism $R\to R'$ that maps $\q$ into $\q'$.
Let $\Sigma$ be a functor from the category of ideals to the category of sets.
We call it a \emph{functorial generating set} for the relative elementary subgroup if
there is a natural inclusion of $\Sigma(R,\q)$ to the underlying set of $E(R,\q)$
and $\Sigma(R,\q)$ spans $E(R,\q)$.

An example of a functorial generating set of the relative elementary subgroup is well known
(see~\cite{VasersteinChevalley}). Namely,
$$
\Sigma(R,\q)=\{x_\alpha(p)^{x_{-\alpha}(r)}\mid \alpha\in\Phi,\ p\in\q,\ r\in R\}.
$$

\begin{cor}\label{length0}
Let $\Sigma$ be a functorial generating set for the relative elementary subgroup of $G$.
Then there exists $L\in\N$ such that for any ring $R$ and any ideal $\q$ of $R$
the width of the set
$$
\{[x_\alpha(r),b]\mid r\in\q,\, b\in G(R)\text{ or }r\in R,\,b\in G(R,\q)\}
$$
with respect to $\Sigma(R,\q)$ is not greater than $L$.
\end{cor}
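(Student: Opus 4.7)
The plan is to extract the bound directly from the generic calculation already performed in the proof of Proposition~\ref{stcommform}. There, for each root $\alpha\in\Phi$, we produced the element $h_\alpha = [x_\alpha(t), g] \in G(A[t])$ and established the two memberships $h_\alpha\in E(A[t],tA[t])$ and $h_\alpha\in E(A[t],I[t])$. Since $\Sigma$ is a functorial generating set for the relative elementary subgroup, each of these memberships gives a finite expression of $h_\alpha$ as a product of elements of $\Sigma(A[t],tA[t])$ and of $\Sigma(A[t],I[t])$, respectively. Let $L_\alpha^{(1)}$ and $L_\alpha^{(2)}$ denote the minimal lengths of such expressions, and set
$$
L = \max_{\alpha\in\Phi}\max\bigl(L_\alpha^{(1)},L_\alpha^{(2)}\bigr),
$$
which is finite because $\Phi$ is finite.

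To verify this $L$ suffices, fix $r\in\q$ and $b\in G(R)$, and let $\ph:A[t]\to R$ be the unique extension of $b:A\to R$ sending $t\mapsto r$, exactly as in the proof of Proposition~\ref{stcommform}. Because $r\in\q$, the homomorphism $\ph$ sends $tA[t]$ into $\q$, and so defines a morphism $(A[t],tA[t])\to(R,\q)$ in the category of ideals. Functoriality of $\Sigma$ then converts an expression of $h_\alpha$ as a product of $L_\alpha^{(1)}$ elements of $\Sigma(A[t],tA[t])$ into an expression of $\ph(h_\alpha)=[x_\alpha(r),b]$ as a product of the same number of elements of $\Sigma(R,\q)$. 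The symmetric case $r\in R$, $b\in G(R,\q)$ proceeds identically: now $b(I)\subseteq\q$, so $\ph$ becomes a morphism $(A[t],I[t])\to(R,\q)$, and one uses the bound $L_\alpha^{(2)}$ in place of $L_\alpha^{(1)}$.

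This is the universal localization philosophy in its simplest incarnation: all the substantive work has already been done once and for all on the versal element $h_\alpha$ living in the single ring $A[t]$, and every specialization then inherits the width bound for free by functoriality. Accordingly, I do not anticipate any genuine obstacle; the only quantitative input is the finiteness of the root system $\Phi$, which ensures that the two maxima defining $L$ are finite.
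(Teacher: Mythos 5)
Your proposal is correct and follows essentially the same route as the paper: specialize the versal element $[x_\alpha(t),g]\in E(A[t],tA[t])\cap E(A[t],I[t])$ via the morphism $(A[t],tA[t])\to(R,\q)$ or $(A[t],I[t])\to(R,\q)$ and use functoriality of $\Sigma$ to transport the width bound. You are in fact a little more careful than the paper's terse proof in making explicit that the bound must be taken as a maximum over the finitely many roots $\alpha\in\Phi$ and over the two memberships.
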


\begin{proof}
Clearly, if $\ph:(R,\q)\to(R',\q')$ takes $a\in E(R,\q)$ to $b\in E(R',\q')$,
then the width of $b$ with respect to $\Sigma(R',\q')$ is not greater than the width of
$a$ with respect to $\Sigma(R,\q)$. Now the result follows from the previous proof as
any element of the set under consideration is an image of the element
$h\in E(A[t],I[t])\cap E(A[t],tA[t])$, constructed there.
\end{proof}

\section{Key construction}\label{KeyConstruction}

In this section we construct a ring $D=D_G$ together with elements $t\in D$ and $f\in G(D,tD)$
satisfying the following property.

\begin{property}\label{univ}
Given a ring $R$ and elements $r\in R$ and $h\in G(R,rR)$ there exists a
homomorphism $\theta:D\to R$ such that $\theta(t)=r$ and $\theta(f)=h$.
\end{property}

In case of $G=\SL_n$ the construction is very simple. Namely,
$$
D_{\SL_n}=\Z[t,y_{ij}\mid 1\le i,j\le n]/\bigl(\det(e+ty)-1\bigr),
$$
where $e$ denotes the identity matrix and $y$ the matrix with $y_{ij}$ in
position $(i,j)$.
The general construction is a kind of a cone over $G$ at the identity element
but instead of an embedding to a projective space we use a faithful representation.

Let $K$ be a ring.
A (rational, finite dimensional) representation of an algebraic group $G$ over $K$ is a morphism
$\pi:G\to\GL_n$ of group schemes (a natural transformation of functors
$\pi:\Hom(K[G],\blank)\to\Hom(K[\GL_n],\blank)$ that respects the group structure).
Note that this definition is more restrictive than one in~\cite[I, 2.7]{Jantzen}.
A representation is called faithful if it is a monomorphism
of schemes. It is easy to see that this is equivalent to saying that
all maps $\pi_R:G(R)\to\GL_n(R)$ are injective.
The following fact is well known for specialists.

\begin{lem}\label{faithful}
For any affine group scheme over $\Z$ there exists a faithful representation.
\end{lem}

\begin{proof}
A $\Z$-group scheme $G$ is flat iff $\Z[G]$ is a flat $\Z$-module, which means
that $\Z[G]$ is torsion free. The action of $G(R)$ on itself by (right) multiplications
induces a structure of $G$-module on the affine algebra $\Z[G]$, see~\cite[I, 2.7]{Jantzen}.
Recall that all affine schemes in the article are assumed to be of finite type.
Therefore, $\Z[G]$ is a finitely generated
$\Z$-algebra. Choose a generating set and denote by $M$ the smallest $G$-submodule of $\Z[G]$,
containing all the generators. By~\cite[I, 2.13(3)]{Jantzen} $M$ is finitely
generated over $\Z$. Being a finitely generated torsion free $\Z$-module $M$ is free.
A choice of a basis of $M$ defines a representation $\pi:G\to\GL_n$, where $n$ is the rank
of $M$.

It is straightforward to check that for any ring $R$ two different elements $a,b\in G(R)$
act differently on $R[G]=\Z[G]\otimes R$. Therefore, they act differently on the generators
of $\Z[G]$. It follows that $\pi_R$ is injective for any ring $R$, hence $\pi$
is faithful.
\end{proof}

Denote by $M_n(R)$ the full matrix ring over $R$, so that $M_n$ defines an
affine scheme over $\Z$ isomorphic to the $n^2$-dimensional affine space.
The affine algebra $\Z[M_n]$ will be identified with the polynomial ring
$\Z[z_{ij}\mid 1\le i,j\le n]$. Usually we compose a representation $\pi:G\to\GL_n$
with the natural embedding $\GL_n\to M_n$. By abuse of notation the morphism
of affine schemes $G\to M_n$ will be called a representation and denoted by the
same symbol $\pi$. Clearly, the map $\pi:G\to M_n$ is a monomorphism iff
$G\to\GL_n$ is so.

Let $\mf R$ denote the category of commutative rings with units.
A representation $\pi:G\to M_n$ defines a ring homomorphism
$\pi^*:\Z[M_n]\to A$. By duality, $\pi$ is faithful iff $\pi^*$ is an epimorphism in $\mf R$.
In general an epimorphism in $\mf R$ is not surjective, e.\,g. a
localization homomorphism is an epimorphism. To get around this problem we need the following
fact. It will be used in Lemma~\ref{splitY} to prove the splitting property of a certain ideal.

\begin{lem}\label{EpiAndKer}
Let
$R_1 \stackrel\ph\to R_2 \stackrel\psi\to R$
be a sequence of morphisms in $\mf R$.
Suppose that $\ph$ is an epimorphism in $\mf R$ and $\psi\circ\ph$ is surjective. Then the kernel
of $\psi$ is generated by $\ph(\Ker\psi\circ\ph)$ as an ideal of $R_2$.
\end{lem}

\begin{proof}
Denote by $\mf a$ the kernel of $\psi\circ\ph$ and let $\mf b$ be the ideal of $R_2$
generated by $\ph(\mf a)$. Tensoring given sequence of morphisms with $R_1/\mf a$
over $R_1$ we obtain the commutative diagram
$$
\begin{CD}
R_1     @>\ph >>       R_2          @>\psi>> R\\
@V\rho_{\mf a}VV @V\rho_{\mf b}VV        @| \\
R_1/\mf a @>\ph'>>    R_2/\mf b     @>\psi'>> R
\end{CD}
$$
Since the left part of the diagram is a pushout square and $\ph$ is an epimorphism, $\ph'$
is an epimorphism as well. Surjectivity of $\psi\circ\ph$ implies that $\psi'\circ\ph'$
is an isomorphism. Denote by $\theta$ its inverse.
Notice that two maps $\ph'$ and $\ph'\circ\theta\circ\psi'\circ\ph'$ from $R_1/\mf a$ to
$R_2/\mf b$ are equal. Since $\ph'$ is an epimorphism, $\ph'\circ\theta\circ\psi'$ is the
identity map, hence $\psi'$ is injective. It follows that the kernel of $\psi$ is
equal to $\mf b$ as required.
\end{proof}

\begin{defn}\label{KeyConstr}
Let $G$ be a group scheme over $\Z$ and $\pi:G\to M_n$ a faithful representation
(by Lemma~\ref{faithful} it always exists).
Consider a homomorphism of polynomial rings
$\ph:\Z[M_n]\to \Z[t,y_{ij}\mid 1\le i,j\le n]$ sending $z_{ij}$ to $\delta_{ij}+ty_{ij}$
for all $1\le i,j\le n$. Define $D$ by the pushout diagram
$$
\begin{CD}
\Z[M_n] @>\ph >> \Z[t,y_{ij}]\\
@V{\pi^*}VV          @V\xi VV    \\
A       @>f>>           D
\end{CD}
$$
i.\,e. $D$ is the tensor product of $\Z[t,y_{ij}]$ and $A$ over $\Z[M_n]$.
By abuse of notation we denote the images of $t$ and $y_{ij}$'s in $D$ by the same
symbols. Define $f\in G(D)$ as the bottom horizontal arrow of the pushout diagram above.
The matrix entries of $\pi(f)$ are the images of $z_{ij}$'s, therefore
$\pi(f)=e+ty$, where $y$ is a matrix with entries $y_{ij}$.
In particular, $f$ vanishes modulo $t$, i.\,e. $f\in G(D,tD)$.
\end{defn}

Note that $\xi$ is an epimorphism in $\mf R$ as $\pi^*$ has this property.

\begin{prop}\label{versal}
The triple $(D,t,f)$ constructed above satisfies property~$\ref{univ}$.
Moreover, let $\mf a$ be an ideal of a ring $R$, $r\in R$, and $h\in G(R,r\mf a)$.
Then there exists a homomorphism $\theta:D\to R$ such that
$\theta(t)=r$, $\theta(f)=h$, and $\theta(y_{ij})\in\mf a$ for all $i,j=1,\dots,n$.
\end{prop}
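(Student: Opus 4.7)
The plan is to unwind the pushout definition of $D$: any ring homomorphism $\theta:D\to R$ is the same thing as a compatible pair consisting of a ring homomorphism $A\to R$ (which is an element of $G(R)$) and a ring homomorphism $\Z[t,y_{ij}]\to R$ (which is a choice of a scalar $r$ and a matrix of scalars $a_{ij}$), subject to the condition that they agree after pullback to $\Z[M_n]$. So given $h\in G(R,r\mf a)$, I need only choose the $a_{ij}$ honestly and verify the compatibility.

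First I record the key algebraic identity $\pi^*(z_{ij})-\delta_{ij}\in I$: since $\pi$ is a morphism of group schemes it sends the identity element of $G$ to the identity matrix, which translated to affine algebras says $e_K\circ\pi^*(z_{ij})=\delta_{ij}$, i.e.\ $\pi^*(z_{ij})-\delta_{ij}\in\Ker e_K=I$. Consequently, for our $h:A\to R$ satisfying $h(I)\subseteq r\mf a$, the matrix entry $\pi(h)_{ij}-\delta_{ij}=h\bigl(\pi^*(z_{ij})-\delta_{ij}\bigr)$ lies in $r\mf a$. Hence for each $i,j$ one can pick $a_{ij}\in\mf a$ with
$$
\pi(h)_{ij}-\delta_{ij}=r\,a_{ij}.
$$
Now define $\tilde\theta:\Z[t,y_{ij}]\to R$ by $t\mapsto r$ and $y_{ij}\mapsto a_{ij}$. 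On the subring $\Z[M_n]$ the two composites $h\circ\pi^*$ and $\tilde\theta\circ\ph$ agree on the generators $z_{ij}$, since
$$
\tilde\theta\bigl(\ph(z_{ij})\bigr)=\tilde\theta(\delta_{ij}+t\,y_{ij})=\delta_{ij}+r\,a_{ij}=\pi(h)_{ij}=h\bigl(\pi^*(z_{ij})\bigr).
$$
By the universal property of the pushout, there is a unique $\theta:D\to R$ with $\theta\circ\xi=\tilde\theta$ and $\theta\circ f=h$, where on the right $f:A\to D$ is viewed as a ring map. This immediately gives $\theta(t)=r$, $\theta(y_{ij})=a_{ij}\in\mf a$, and $\theta(f)=\theta\circ f=h$, yielding both the strengthened statement and (taking $\mf a=R$) Property~\ref{univ}.

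There is essentially no obstacle beyond the bookkeeping: the entire argument is an application of the universal property of the pushout, and the only nontrivial input is the observation $\pi^*(z_{ij})-\delta_{ij}\in I$, which encodes that $\pi$ takes the identity of $G$ to the identity matrix. The one subtle point, which is harmless for the existence claim but worth flagging, is that the factorization $\pi(h)_{ij}-\delta_{ij}=r\,a_{ij}$ need not be canonical when $r$ is a zero divisor; we are simply picking a witness, which is what the proposition demands.
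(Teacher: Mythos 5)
Your argument is correct and follows exactly the same route as the paper: choose $a_{ij}\in\mf a$ witnessing $\pi(h)_{ij}-\delta_{ij}=ra_{ij}$ (the paper's $\tilde h_{ij}$), assemble the map $\Z[t,y_{ij}]\to R$ (the paper's $\psi$), check compatibility over $\Z[M_n]$, and invoke the universal property of the pushout. The remarks about $\pi^*(z_{ij})-\delta_{ij}\in I$ and the non-uniqueness of the factorization are accurate but only make explicit what the paper's proof leaves implicit.
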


\begin{proof}
By definition, $h$ is a homomorphism $A\to R$ and
$h\bigl(\pi^*(z_{ij})\bigr)$ are the matrix entries of $\pi(h)$.
Since $h\in G(R,r\mf a)$, we have $h\bigl(\pi^*(z_{ij}-\delta_{ij})\bigr)=r\tilde h_{ij}$
for some $\tilde h_{ij}\in\mf a$.
Define the homomorphism $\psi:\Z[t,y_{ij}]\to R$ by
$\psi(t)=r$ and $\psi(y_{ij})=\tilde h_{ij}$, so that
$\psi\circ\ph=h\circ\pi^*$. By definition of $D$ there exists a
unique homomorphism $\theta:D\to R$ such that $\theta\circ f=h$ and $\theta\circ\xi=\psi$.
The former equation shows that $\theta(f)=h$, whereas the latter implies that $\theta(t)=r$
and $\theta(y_{ij})=\tilde h_{ij}\in\mf a$.
\end{proof}

Denote by $Y$ the ideal in $D$ generated by $y_{ij}$'s. It plays the same role as the fundamental
ideal in the affine algebra of the group in the proof of the standard commutator formulas.
Here we prove that $Y$ is a splitting ideal.

\begin{lem}\label{splitY}
The ideal $Y$ is a splitting ideal.
Let $R$ be a ring and $s\in R$. Denote by $Y'$ the ideal $(Y\otimes R)/(t-s)$
of $(D\otimes R)/(t-s)$. Then $Y'$ is a splitting ideal.
Moreover, $(D\otimes R)/(t-s)=R\oplus Y'$ as additive groups.
\end{lem}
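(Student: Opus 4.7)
The plan is to construct an explicit retraction $\eta \colon D \to \Z[t]$ with kernel $Y$, and then to observe that this splitting is preserved under tensoring with $R$ over $\Z$ and quotienting by $(t-s)$.

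The map $\eta$ is obtained from the pushout defining $D$. Take $\psi_1 \colon \Z[t,y_{ij}] \to \Z[t]$ to be the ring map killing every $y_{ij}$ and fixing $t$, and $\psi_2 \colon A \to \Z[t]$ to be $A \xrightarrow{e_\Z} \Z \hookrightarrow \Z[t]$. The compatibility $\psi_1 \circ \ph = \psi_2 \circ \pi^*$ is verified on the generators $z_{ij}$ of $\Z[M_n]$: the left side gives $\psi_1(\delta_{ij} + t y_{ij}) = \delta_{ij}$, while the right side gives $e_\Z(\pi^*(z_{ij})) = \delta_{ij}$, because $\pi$ sends the identity element of $G$ to the identity matrix of $M_n$. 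The universal property of the pushout then produces a unique ring homomorphism $\eta \colon D \to \Z[t]$ with $\eta \circ \xi = \psi_1$ and $\eta \circ f = \psi_2$. Composing the inclusion $\Z[t] \hookrightarrow \Z[t, y_{ij}]$ with $\xi$ yields a ring homomorphism $\sigma \colon \Z[t] \to D$ satisfying $\eta \circ \sigma = \mathrm{id}$, so $\eta$ is a retraction onto the subring $S := \sigma(\Z[t])$ and $D = S \oplus \ker \eta$ as abelian groups. Clearly $Y \subseteq \ker \eta$. Since $\pi^*$ is surjective ($\pi$ is a closed immersion), so is $\xi$, and the additive decomposition $\Z[t, y_{ij}] = \Z[t] \oplus (y_{ij})$ yields $D = S + Y$; combined with $S \cap Y = 0$ (apply $\eta$), this forces $\ker \eta = Y$, so $Y$ is a splitting ideal of $D$.

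For the second half, the remark at the end of the Rings subsection shows that $Y \otimes R = Y \otimes_\Z R$ is a splitting ideal of $D \otimes R$ with complement $S \otimes R \cong R[t]$. Since $t - s$ lies in the summand $R[t] \subseteq D \otimes R$ and $Y \otimes R$ is an ideal, multiplication by $(t-s)$ preserves the direct-sum decomposition, and
$$
(t-s)(D \otimes R) \;=\; (t-s) R[t] \,\oplus\, (t-s)(Y \otimes R).
$$
Passing to the quotient yields
$$
(D \otimes R)/(t-s) \;=\; R[t]/(t-s) \,\oplus\, (Y \otimes R)/(t-s)(Y \otimes R) \;=\; R \oplus Y',
$$
and projection onto the first summand is a ring retraction with kernel $Y'$, so $Y'$ is a splitting ideal.

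The main technical obstacle is the pushout compatibility needed to define $\eta$, which ultimately hinges on the fact that $\pi$ carries the identity of $G$ to the identity matrix; once it is in hand, everything else is routine manipulation of direct-sum decompositions.
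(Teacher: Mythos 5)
Your proof is correct and follows the same route as the paper: construct a retraction $D \to \Z[t]$ via the pushout universal property (the compatibility coming from $\pi$ sending the identity of $G$ to the identity matrix), identify its kernel with $Y$, then tensor with $R$ over $\Z$ and quotient by $(t-s)$. You fill in two points the paper leaves implicit — the identification $\ker\eta = Y$ via surjectivity of $\xi$, and the compatibility of the direct-sum decomposition with the quotient by $(t-s)$ — but the argument is essentially the same.
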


\begin{proof}
Consider the diagram
$$
\begin{CD}
\Z[M_n] @>\ph >> \Z[t,y_{ij}]\\
@V{\pi^*}VV      @V\rho VV    \\
A       @>e>>    \Z[t]
\end{CD}
$$
where $e$ is the identity element of $G(\Z[t])$ and $\rho$ is the reduction homomorphism modulo
the ideal, generated by $y_{ij}$'s. The element $e\bigl(\pi^*(z_{ij})\bigr)$
is the matrix entry of the identity matrix. Hence it is equal to
$\rho_Y\bigl(\ph(z_{ij})\bigr)=\rho_Y(\delta_{ij}+ty_{ij})=\delta_{ij}$.
It follows that the diagram commutes; therefore there exists a unique map
$\rho':D\to\Z[t]$ such that $\rho'\circ\xi=\rho$ and $\rho'\circ e=f$.
Thus, we have the following commutative diagram
$$
\begin{CD}\Z[t]\ \ & \hlrarrow & \ \ \Z[t,y_{ij}]  @>\xi>> D             @>\rho'>> \Z[t]\\
       @V{\otimes R}VV             @V{\otimes R}VV @VV{\otimes R}V       @VV{\otimes R}V\\
           R[t]\ \ & \hlrarrow & \ \  R[t,y_{ij}]  @>>> D\otimes R       @>>>       R[t]\\
       @V{\rho_{(t-s)}}VV       @V{\rho_{(t-s)}}VV @VV{\rho_{(t-s)}}V    @V{\rho_{(t-s)}}VV\\
           R  \ \  & \hlrarrow & \ \  R[y_{ij}]    @>\xi'>> D\otimes R/(t-s) @>\rho''>>       R   \\
\end{CD}
$$
where the second row is obtained from the first one by tensoring with $R$,
and the third row is the image of the second one under the reduction homomorphism modulo
$(t-s)$. To factor out the ideal $(t-s)$ is the same as tensoring with $R[t]/(t-s)$
over $R[t]$. Therefore all squares in the diagram are pushouts.
Since $\xi$ is an epimorphism in $\mf R$, the map $\xi'$ has the same property.
Clearly, the kernels of $\rho'\circ\xi$ and $\rho''\circ\xi'$ are generated by $y_{ij}$'s.
Now, by Lemma~\ref{EpiAndKer} $\Ker\rho'=Y$ and $\Ker\rho''=Y'$.
Thus, $D=\Z[t]\oplus Y$ and $D\otimes R/(t-s)=R\oplus Y'$.
\end{proof}

\begin{cor}\label{KeySplitting}
Let $\mf a$ be an ideal of $R$. With the notation of the previous lemma we have
\begin{multline*}
E\bigl(D\otimes R/(t-s),D\otimes\mf a/(t-s)\bigr)\cap G(D\otimes R/(t-s),Y')=\\
\EE\bigl(D\otimes R/(t-s),D\otimes\mf a/(t-s),Y'\bigr).
\end{multline*}
\end{cor}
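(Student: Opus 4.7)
The plan is to recognize that this corollary is essentially a direct application of the relative splitting principle (Lemma~\ref{splitting}) to the ring $C = (D \otimes R)/(t-s)$, with the splitting data provided by Lemma~\ref{splitY}.

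First, I would set $C = (D \otimes R)/(t-s)$. By Lemma~\ref{splitY}, we have the additive decomposition $C = R \oplus Y'$, so that $R$ embeds as a subring of $C$ and $Y'$ is a splitting ideal of $C$. This is exactly the hypothesis of Lemma~\ref{splitting}, with $Y'$ playing the role of the splitting ideal denoted $\mf a$ in that lemma.

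Next, I would identify the second ideal $D \otimes \mf a / (t-s)$ as an extended ideal. Namely, viewing $\mf a$ as an ideal of the subring $R \subseteq C$, the product $\mf a \cdot C$ is the ideal of $C$ generated by the image of $\mf a$; but this image coincides with the image of $1 \otimes \mf a \subseteq D \otimes R$ in $C$, which is precisely $D \otimes \mf a / (t-s)$. Hence with $\mf b' = \mf a$ (an ideal of $R$) we have $\mf b := \mf b' C = D \otimes \mf a / (t-s)$, matching the ``$\mf b$'' required by Lemma~\ref{splitting}.

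Now I would invoke Lemma~\ref{splitting} directly to conclude
\begin{equation*}
E(C, \mf b) \cap G(C, Y') = E(C, Y' \mf b)[E(C, Y'), E(C, \mf b)],
\end{equation*}
and rewrite the right-hand side using the definition $\EE(C, \mf a_1, \mf a_2) = E(C, \mf a_1 \mf a_2)[E(C, \mf a_1), E(C, \mf a_2)]$, which is manifestly symmetric in its two ideal arguments. This yields the claimed equality $\EE(C, D \otimes \mf a / (t-s), Y')$. The only nontrivial point in the argument is the identification of $D \otimes \mf a / (t-s)$ with the extension $\mf a C$ of an ideal of the subring $R$; once this is in place, the corollary is immediate. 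No further obstacle is expected, since the whole content is packaged in the preceding lemma and the splitting principle.
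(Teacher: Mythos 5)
Your proof is correct and follows essentially the same route as the paper, which simply cites Lemma~\ref{splitY} and the splitting principle~\ref{splitting}; your write-up fills in the one point the paper leaves implicit, namely the identification of $D\otimes\mf a/(t-s)$ with the extension $\mf a C$ of the ideal $\mf a$ of the subring $R\subseteq C=(D\otimes R)/(t-s)$, and the observation that $\EE$ is symmetric in its two ideal arguments.
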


\begin{proof}
The corollary follows immediately from the previous lemma and splitting principle~\ref{splitting}.
\end{proof}
\section{Extended dilation principle and key lemma}

In the current section $G$ is assumed to be simply connected.
We prove the Key Lemma of the current article which is a stronger version of Bak's key lemma.
The original statement obtained by A.\,Bak in~\cite{BakNonabelian} for $G=\GL_n$
is a special case of Key Lemma~\ref{key} with $R'=\q=R$ and $\ph=\operatorname{id}$.
Later Bak's key lemma was extended to all Chevalley groups by R.\,Hazrat and N.\,Vavilov
in~\cite{HazVav}. The relative version appeared in~\cite{BHVstrike} by the three above-mentioned
authors. In these articles it was proved using
double localization -- the method that required commutator calculus called
``yoga of commutators''. The lemma is a key computational step in the existing
proofs of the nilpotent structure of $K_1$ and the boundedness of commutator width.

If we replace $G(R,s^mR)$ by $E(R,s^mR)$, then the usual dilation principle works
(see~\cite{StepComput}).
Namely, it suffices to consider a generator of $E(s^mR)$ and,
since $\Z[X_\alpha]=\Z[t]$, we have a generic element
$x_\alpha(t)\in E(R[t],tR[t])$ for such a generator.
Afterwards we specialize the independent variable $t$ to $s^m$ to get the result.

For the general case we constructed a generic element of $G(R,s^mR)$ in the previous section.
But now $t$ is not an independent variable and we cannot apply the usual dilation principle.
To get around this problem we prove an extension of the principle.
We shall see that over localized ring the principle follows almost immediately from
Lemma~\ref{EE}. The main concern of the proof is to pull the result back to the group over
original ring. This is done in the following lemma which uses the method by A.\,Bak and the author
of struggling against zero divisors.

\begin{lem}\label{zerodiv}
Let $\q$ be an ideal of a ring $R$, $s\in R$, and $a\in G(R,s\q)$.
If $\lambda_s(a)\in\lambda_s\bigl(E(R,s\q)\bigr)$, then $a\in E(R,s\q)$.
\end{lem}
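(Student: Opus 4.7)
The plan is to combine the versal element of Section~\ref{KeyConstruction} with the hypothesis to land in a ring where a suitable ideal is nilpotent, so that Corollary~\ref{radical} finishes the proof.

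First I would reduce the statement to an assertion about a single element: since $E(R,s\q)$ is a subgroup of $G(R)$, the hypothesis gives $b\in E(R,s\q)$ with $\lambda_s(a)=\lambda_s(b)$, and it suffices to prove that $c:=ab^{-1}\in E(R,s\q)$, knowing that $c\in G(R,s\q)$ and $\lambda_s(c)=e_{R_s}$. Next, apply Proposition~\ref{versal} with $r=s$ and ideal $\q$ to obtain $\theta:D\to R$ with $\theta(t)=s$, $\theta(f)=c$, and $\theta(y_{ij})\in\q$ for all $i,j$. Because $\pi$ is a closed immersion, $\pi^*$ is surjective and $I$ is generated by the classes of the $z_{ij}-\delta_{ij}$; since $f(\pi^*(z_{ij}-\delta_{ij}))=ty_{ij}$, the subset $c(I)$ of $R$ is the $c(A)$-submodule generated by the elements $s\,\theta(y_{ij})$. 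The equality $\lambda_s(c)=e_{R_s}$ means $c(I)\subseteq\ker\lambda_s$, so $\lambda_s\bigl(s\,\theta(y_{ij})\bigr)=0$ for all $i,j$; because $s$ is invertible in $R_s$, this forces $\lambda_s\bigl(\theta(y_{ij})\bigr)=0$. Only finitely many $y_{ij}$ appear, so a single $N\in\N$ satisfies $s^N\theta(y_{ij})=0$, i.e.\ $\theta(t^Ny_{ij})=0$, for all $i,j$. Hence $\theta$ annihilates the ideal $t^NY$ and factors through $\bar\theta:\bar D\to R$ for $\bar D:=D/(t^NY)$.

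Finally, in $\bar D$ the image $\overline{tY}$ of $tY$ satisfies $(\overline{tY})^N\subseteq\overline{t^NY^N}=0$, so it is nilpotent and in particular lies in the Jacobson radical of $\bar D$. Since $G$ is simply connected, Corollary~\ref{radical} gives $G(\bar D,\overline{tY})\le E(\bar D,\overline{tY})$. The image $\bar f$ of the versal element $f\in G(D,tY)$ lies in $G(\bar D,\overline{tY})$, hence in $E(\bar D,\overline{tY})$. Applying $\bar\theta$ and using $\bar\theta(\overline{tY})=s\,\theta(Y)\subseteq s\q$ together with functoriality of the relative elementary subgroup yields $c=\bar\theta(\bar f)\in E(R,s\q)$, and therefore $a=cb\in E(R,s\q)$.

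The main subtlety is the passage from $s\,\theta(Y)\subseteq\ker\lambda_s$ to $\theta(Y)\subseteq\ker\lambda_s$: one cannot cancel $s$ in $R$ itself (where it may be a zero divisor, which is exactly why the lemma is non-trivial), but invertibility of $s$ in $R_s$ makes the cancellation legitimate. The rest of the argument rests on the elementary inclusion $(tY)^N\subseteq t^NY$, which converts the $s$-torsion provided by the hypothesis into nilpotency of a single ideal; this is precisely what allows the clean Jacobson-radical Corollary~\ref{radical} to do the work traditionally done by the two-denominator ``yoga of commutators''.
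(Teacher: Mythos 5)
Your proof is correct, and it takes a genuinely different route from the paper's. The paper's argument passes to the reduced quotient $R/N$ (where $N$ is the nilradical), invokes an external result (Lemma~5.5 of the Bak--Stepanov paper on nets) that the localization map is injective on $\bar sR/N$ because $R/N$ is reduced, concludes $\bar a\in E(R/N,\bar s\bar\q)$, lifts $\bar a$ back to $E(R,s\q)$ using that $E$ preserves surjections, and absorbs the remaining discrepancy in $G(R,N\cap s\q)$ via Corollary~\ref{radical}. You instead stay inside the machinery of Section~\ref{KeyConstruction}: after the standard reduction to $c=ab^{-1}$ with $\lambda_s(c)=e$, you feed $c$ into Proposition~\ref{versal}, observe that $c(I)\subseteq\ker\lambda_s$ forces $s$-torsion of the $\theta(y_{ij})$, and use that torsion to quotient $D$ down to $\bar D=D/(t^NY)$ where the ideal $\overline{tY}$ is nilpotent thanks to $(tY)^N\subseteq t^NY$; Corollary~\ref{radical} then applies in $\bar D$ and pushes forward along $\bar\theta$. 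The key technical point you rely on — that $I$ is generated by the $\pi^*(z_{ij}-\delta_{ij})$, so that $f(I)$ lands in $tY$ (not just $tD$) — is indeed available because $\pi$ is a closed immersion, and it is implicitly used elsewhere in the paper (``since $f$ vanishes modulo $Y$''). Both proofs invoke Corollary~\ref{radical}, and both exploit that $\ker\lambda_s$ consists of $s$-torsion, but you convert the torsion into nilpotency of an ideal of the versal ring rather than into injectivity over the reduced quotient. The gain in your version is self-containedness (no external injectivity lemma) and consistency with the paper's ``universal localization'' philosophy of reasoning once in a versal ring and specializing; the paper's version is a touch shorter and does not require unwinding the generators of $I$.
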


\begin{proof}
Let $N$ denote the nilpotent radical of $R$, $\bar s=\rho_N(s)$, $\bar\q=\rho_N(\q)$, and
$\bar a=\rho_N(a)\in G(R/N,\bar s\bar\q)$. Since $R_s/N_s$ is naturally
isomorphic to $(R/N)_{\bar s}$, we have
$\lambda_{\bar s}(\bar a)\in\lambda_{\bar s}\bigl(E(R/N,\bar s\bar\q)\bigr)$.
Since $R/N$ is a reduced ring, by~\cite[Lemma~5.5]{BakStepNet} the restriction of
$\lambda_{\bar s}$ on $\bar sR/N$ is injective. Therefore $\bar a\in E(R/N,\bar s\bar\q)$.
Since $E$ preserves surjective homomorphisms, there exists $b\in E(R,s\q)$ such that
$\rho_N(b)=\bar a$. Now, $a\in bG(R,N\cap s\q)$ and $G(R,N\cap s\q)$ is contained
in $E(R,s\q)$ by Lemma~\ref{radical}.
\end{proof}

\begin{lem}\label{exdilation}
Let $R$ be a ring, $s,t\in R$ and $a\in G(R,tR)$. Suppose that $\lambda_s(a)\in E(R_s,tR_s)$.
Then there exists $m\in\N$ such that $\rho(a)\in E\bigl(R/(t-s^m)\bigr)$, where
$\rho:R\to R/(t-s^m)$ is the reduction homomorphism.
\end{lem}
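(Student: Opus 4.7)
The plan is to transport the elementariness of $\lambda_s(a)$ over $R_s$ down to the quotient $R'=R/(t-s^m)$ for $m$ sufficiently large, and then apply Lemma~\ref{EE} inside $R'$ to convert the resulting deep relative elementariness into membership in $E(R')$.

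First, since $\lambda_s(a)\in E(R_s,tR_s)$, I would fix a finite product expression $\lambda_s(a)=\prod_k x_{\alpha_k}(tc_k/s^N)^{x_{-\alpha_k}(d_k/s^N)}$ using the standard generators of the relative elementary subgroup, with $c_k,d_k\in R$ and a common denominator $s^N$. Passing to $R'=R/(t-s^m)$ for $m\ge N$ and to $R'_s\cong R_s/(t-s^m)$, the identification $t=s^m$ rewrites each generator as $x_{\alpha_k}(s^{m-N}c_k)^{x_{-\alpha_k}(d_k/s^N)}$, so that the argument of $x_{\alpha_k}$ now lies in $s^{m-N}R'$.

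The core technical step is to construct, for $m$ sufficiently large, an element $b\in E(R',s^mR')$ with $\lambda_s(b)=\lambda_s(\rho(a))$. Using the identity $x^y=x\cdot[x^{-1},y^{-1}]$ together with Lemma~\ref{CommId} and the Chevalley commutator formulas, one expands each conjugate $x_{\alpha_k}(s^{m-N}c_k)^{x_{-\alpha_k}(d_k/s^N)}$ into a product of elementary root elements whose arguments are polynomials in $s^{m-N}c_k$ and $d_k/s^N$ in which every monomial carries a factor of $s^{m-N}$. For $m$ large enough in terms of $N$ and the root system, the $s^{m-N}$-factors absorb the $s^N$-denominators, producing root elements with arguments in $s^mR'$. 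I expect this to be the main obstacle: carefully tracking the accumulated $s$-denominators through iterated commutator expansion and, in particular, handling opposite-root commutators $[x_\alpha,x_{-\alpha}]$ --- where the standing assumption $\Phi\ne A_1$ provides enough additional roots to sidestep the appearance of torus elements.

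Once such a $b$ is constructed, $\rho(a)\cdot b^{-1}$ lies in $G(R',s^mR')$ and has trivial $\lambda_s$-image, so it trivially belongs to $\lambda_s\bigl(E(R',s^mR')\bigr)$. Lemma~\ref{zerodiv}, applied with $\q=s^{m-1}R'$, then yields $\rho(a)b^{-1}\in E(R',s^mR')$, whence $\rho(a)\in E(R',s^mR')$. Finally, for $m\ge 3$, Lemma~\ref{EE} applied to $s\in R'$ gives $E(R',s^mR')\subseteq E(R',s^3R')\le E(sR')\subseteq E(R')$, completing the proof.
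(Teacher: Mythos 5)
The core of the paper's proof is an auxiliary-variable trick that your proposal misses: pass to $R'=R[u]/(t-u^3)$, where $t$ becomes the cube $u^3$, so that Lemma~\ref{EE} applies verbatim to give $E(R'_s,u^3R'_s)\le E(uR'_s)$. This converts the conjugated (relative) generators into \emph{plain} root unipotents $x_{\alpha_i}(ur_i/s^n)$ whose arguments carry a single visible factor of $u$, and then specializing $u\mapsto s^{n+1}$ turns $ur_i/s^n$ into $sr'_i\in sR/(t-s^m)$ with no commutator calculus at all. Your sketch instead bypasses Lemma~\ref{EE} and proposes to re-expand each conjugate $x_{\alpha_k}(s^{m-N}c_k)^{x_{-\alpha_k}(d_k/s^N)}$ by hand via the Chevalley commutator formulas. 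But that is precisely re-deriving the content of Lemma~\ref{EE} (the Tits--Vaserstein dilation lemma) from scratch: the bookkeeping of accumulated $s$-denominators through nested commutator expansions and, worse, the opposite-root products $[x_\alpha,x_{-\alpha}]$ that produce torus factors and must be routed around via $\alpha=\alpha'+\alpha''$ decompositions. You flag this as ``the main obstacle'' and leave it unsolved --- yet the paper has already packaged exactly these computations once and for all in Lemma~\ref{EE}, whose availability is the reason the $u^3$ substitution works so cleanly. The whole design of the paper is to never redo such element-level calculations after that one lemma.

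There is also a quantitative error: you claim the expansion yields root-element arguments in $s^mR'$. A Chevalley commutator term is $x_{i\alpha+j\beta}\bigl(\pm N_{ij}a^ib^j\bigr)$ with $i,j\ge1$; taking $a=s^{m-N}c_k$ and $b=d_k/s^N$, the $s$-exponent is $(m-N)i-Nj$, which for $i=1$ and $j\ge1$ equals $m-N(j+1)<m$ whenever $N>0$. So the arguments land in some $s^{m'}R'$ with $m'<m$ depending on $N$ and the root system, not in $s^mR'$. The final part of your argument --- forming $\rho(a)b^{-1}$, noting it has trivial $\lambda_s$-image, applying Lemma~\ref{zerodiv} and then Lemma~\ref{EE} --- would in fact still work with any positive exponent $m'\ge3$, so this overstatement is repairable; but the central expansion that produces $b$ is neither carried out nor correctly bounded, and it duplicates a lemma the paper already supplies as a black box.
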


\begin{proof}
Let $R'=R[u]/(t-u^3)$, $\iota:R\to R'$ be the canonical inclusion, and $a'=\iota(a)\in G(R',tR')$.
Clearly, $\lambda_s(a')\in E(R'_s,u^3R'_s)$. By Lemma~\ref{EE} $E(R'_s,u^3R'_s)\le E(uR'_s)$,
hence $\lambda_s(a')=\prod_ix_{\alpha_i}(\frac{ur_i}{s^n})$ for some $\alpha_i\in\Phi$ and
$r_i\in R$. Put $m=3(n+1)$. Define the homomorphism $\rho$ to be the composition of $\iota$
with the reduction homomorphism modulo $(u-s^{n+1})R'$. Clearly,
$$
R'/(u-s^{n+1})=R[u]/(t-u^3,u-s^{n+1})=R[u]/(u-s^{n+1},t-s^{m})=R/(t-s^m)
$$
and $\rho(t)=\rho(s^m)$. It is easy to see that $\rho$ is
the reduction homomorphism modulo $(t-s^m)$. Put $s'=\rho(s)$
and $r'_i=\rho(r_i)$.
Since a localization homomorphism commutes with a reduction homomorphism,
we have
$$
\lambda_{s'}\Bigl(\rho(a)\Bigr)=
\prod_ix_{\alpha_i}(s'r'_i)\in\lambda_{s'}\Bigl(E\bigl(s'R/(t-s^m)\bigr)\Bigr).
$$
On the other hand, $\rho(a)\in G\bigl(R/(t-s^m),s'R/(t-s^m)\bigr)$.
Thus, by Lemma~\ref{zerodiv} we have $\rho(a)\in E\bigl(R/(t-s^m)\bigr)$.
\end{proof}

Now we are at the position to prove the key technical statement. 

\begin{keylem}\label{key}
Let $R$ be a ring, $a\in G(R)$, and $s\in R$. Suppose that $\lambda_s(a)\in E(R_s)$.
Then there exists $m\in\N$ such that for any ring $R'$, homomorphism $\ph:R\to R'$, and
ideal $\q$ of $R'$ we have $[\ph(a),G(R',\ph(s)^m\q)]\le E(R',\q)$.
\end{keylem}

\begin{proof}
First, we consider the commutator $[a,f]\in G(D\otimes R)$, where $D$ is the ring,
constructed in the previous section.
Rings $R$ and $D$ will be identified with their canonical images
in $D\otimes R$.
By standard commutator formula~\ref{stcommform}
$\lambda_s\bigl([a,f]\bigr)\in E\bigl(D\otimes R_s,tD\otimes R_s\bigr)$.
By Lemma~\ref{exdilation} there exists $m\in\N$ such that
$\rho\bigl([a,f]\bigr)\in E\bigl(D\otimes R/(t-s^m)\bigr)$, where $\rho=\rho_{(t-s^m)}$
is the reduction homomorphism.
Let $Y'$ be the ideal of $D\otimes R/(t-s^m)$, generated by the images of $y_{ij}$'s.
Since, $f$ vanishes modulo $Y$, we have
$\rho\bigl([a,f]\bigr)\in G\bigl(D\otimes R/(t-s^m),Y'\bigr)$.
By Corollary~\ref{KeySplitting}
we have $\rho\bigl([a,f]\bigr)\in E\bigl(D\otimes R/(t-s^m),Y'\bigr)$,
hence $[a,f]\in E\bigl(D\otimes R,Y\bigr)G\bigl(D\otimes R,(t-s^m)\bigr)$.

Now, let $b\in G(R',\ph(s)^m\q)$. By Proposition~\ref{versal} there exists a homomorphism
$\theta:D\to R'$ such that $\theta(t)=\ph(s)^m$, $\theta(f)=b$, and $\theta(Y)\subseteq\q$.
The homomorphism $\theta\tensor\ph:D\otimes R\to R'$ maps $t$ and $s^m$ to the same element
of $R'$, hence
$\theta\tensor\ph\Bigl(G\bigl(D\otimes R,(t-s^m)\bigr)\Bigr)=\{e\}$.
Thus,
$$
[\ph(a),b]=\theta\tensor\ph\bigl([a,f]\bigr)\in
\theta\tensor\ph\bigl(E(D\otimes R,Y)\bigr)\le E(R',\q).
$$
\end{proof}

\section{Extended elementary group}

In this section we construct an extended (relative) elementary group and
a generic element for this group.
Let $l$ be a natural number. Denote by $\Um_l(R)$ the set of all unimodular sequences
over $R$ of length $l$.

\begin{defn}\label{defEtilde}
Let $\mf a$ be an ideal of a ring $R$.
Define an extended relative elementary group $\wt E(R,\mf a)$ by the formula
$$
\wt E(R,\mf a)=\wt E^{(l)}(R,\mf a)=\bigcap_{(r_1,\dots,r_l)\in\Um_l(R)}\left(\prod_{k=1}^l G(R,r_k\mf a)\right).
$$
\end{defn}

The definition of $\wt E$ depends on $l$. In the sequel for simply connected groups we put $l=\#W$
as in Corollary~\ref{sc-loc}. On the other hand, the statements of the current section hold independently
of $l$ and of the type of the group.

\begin{lem}\label{Etilde}
$E(R,\mf a)\le\wt E(R,\mf a)$.
\end{lem}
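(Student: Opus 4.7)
The plan is to show, for an arbitrary unimodular sequence $(r_1,\dots,r_l)\in\Um_l(R)$, that $E(R,\mf a)$ sits inside the set-theoretic product $G_1G_2\cdots G_l$, where $G_k=G(R,r_k\mf a)$. Since the unimodular sequence was arbitrary, intersecting over all such sequences then yields $E(R,\mf a)\le\wt E(R,\mf a)$. Note that the decomposition indexed in the definition does not depend on the specific unimodular sequence $(s_1,\dots,s_l)$ chosen in Corollary~\ref{sc-loc}; we just need to handle every unimodular sequence of length $l$.

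First I would observe that each $G_k=G(R,r_k\mf a)$ is a normal subgroup of $G(R)$ (being the kernel of the reduction $G(R)\to G(R/r_k\mf a)$). Consequently, the product $H:=G_1G_2\cdots G_l$ is a subgroup of $G(R)$, and in fact normal: for normal subgroups one has $G_iG_j=G_jG_i$, so products can be rearranged without leaving $H$, and $H^{G(R)}=H$. This is exactly what makes the definition of $\wt E$ sensible.

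Next, I would verify that the generators of the unrelativized subgroup $E(\mf a)$ lie in $H$. Unimodularity of $(r_1,\dots,r_l)$ gives $c_1,\dots,c_l\in R$ with $1=\sum_{k=1}^l c_kr_k$. For any $\alpha\in\Phi$ and $p\in\mf a$, decompose
$$
p=\sum_{k=1}^l c_kr_kp,\qquad c_kr_kp\in r_k\mf a,
$$
and use additivity of the root subgroup to write
$$
x_\alpha(p)=\prod_{k=1}^l x_\alpha(c_kr_kp),
$$
where the $k$-th factor lies in $E(R,r_k\mf a)\le G_k$. Hence $x_\alpha(p)\in H$, so $E(\mf a)\subseteq H$. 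Because $H$ is normal in $G(R)$ (in particular normalized by $E(R)$), the normal closure of $E(\mf a)$ in $E(R)$, which is by definition $E(R,\mf a)$, is contained in $H$.

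There is no real obstacle: the only nontrivial ingredient is the standard fact that a product of normal subgroups is a (normal) subgroup, which circumvents any need to rearrange commutators by hand. The unimodularity of $(r_1,\dots,r_l)$ is used precisely once, to split elementary generators additively across the ideals $r_k\mf a$.
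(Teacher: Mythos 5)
Your proof is correct and follows essentially the same route as the paper: use unimodularity to split $x_\alpha(q)$ additively across the subgroups $G(R,r_k\mf a)$, then invoke normality to pass from generators of $E(\mf a)$ to the full relative elementary subgroup $E(R,\mf a)$. The only difference is cosmetic — you fix one unimodular sequence at a time and intersect at the end, and you spell out why the product of the normal subgroups $G(R,r_k\mf a)$ is a normal subgroup, a point the paper dismisses with ``Clearly.''
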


\begin{proof}
Clearly, $\wt E(R,\mf a)$ is normal in $G(R)$. Therefore, it suffices to show
that it contains $x_\alpha(q)$ for all $\alpha\in\Phi$ and $q\in\mf a$.
If $\sum_{k=1}^lp_kr_k=1$, then
$x_\alpha(q)=\prod_{k=1}^l x_\alpha(p_kr_kq)\in\prod_{k=1}^l G(R,r_k\mf a)$, which
implies the result.
\end{proof}

Let $D^{(k)}$ denote the ring isomorphic to $D$ generated by $t^{(k)}$ and $y_{ij}^{(k)}$
instead of $t$ and $y_{ij}$ (where $1\le i,j\le n$).
Similarly, we write $f^{(k)}\in G(D^{(k)})$ instead of $f\in G(D)$.
Let
$$
U=\bigotimes_{k=1}^lD^{(k)}.
$$
We identify the elements of each $D^{(k)}$ with their canonical images in $U$.
Similarly, the group elements of $G(D^{(k)})$ are identified with their images
in $G(U)$. Denote by $Y^{(k)}$ the ideal of $U$ generated by $y_{ij}^{(k)}$
for all $1\le i,j\le n$ and let $\wt Y=\sum_{k=1}^l Y^{(k)}$.
We show that the element $u=f^{(1)}\cdots f^{(l)}\in G(U)$ is a generic
element of $\wt E$ in the following sense.

\begin{lem}\label{tildeversal}
Given a ring $R$, a unimodular sequence $r_1,\dots,r_l\in R$, and $b\in\wt E(R,\mf q)$,
there exists a homomorphism $\eta:U\to R$ such that $\eta(u)=b$,
$\eta(t^{(k)})=r_k$ for all $k=1,\dots,l$ and $\eta(\wt Y)\subseteq\mf q$.
\end{lem}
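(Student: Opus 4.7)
The plan is simple: apply the versal property of $D$ (Proposition~\ref{versal}) to each tensor factor $D^{(k)}$ separately, and then glue the resulting ring homomorphisms via the universal property of the tensor product over $\Z$.

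First I unpack the hypothesis. Since $(r_1,\dots,r_l)\in\Um_l(R)$, the definition of $\wt E(R,\mf q)$ forces $b\in\prod_{k=1}^l G(R,r_k\mf q)$, i.\,e.\ there is a factorization $b=b_1\cdots b_l$ with $b_k\in G(R,r_k\mf q)$ for each $k=1,\dots,l$. This is the only place where the hypothesis $b\in\wt E(R,\mf q)$ is used, and it is used with respect to the specific unimodular sequence $(r_1,\dots,r_l)$ appearing in the statement.

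Next, for each fixed $k$ I apply Proposition~\ref{versal} to the ring $D^{(k)}$, the element $r_k\in R$, the ideal $\mf q$ of $R$, and the group element $b_k\in G(R,r_k\mf q)$. This yields a homomorphism $\theta_k:D^{(k)}\to R$ satisfying $\theta_k(t^{(k)})=r_k$, $\theta_k(f^{(k)})=b_k$, and $\theta_k(y_{ij}^{(k)})\in\mf q$ for all $i,j$; in particular $\theta_k(Y^{(k)})\subseteq\mf q$.

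Finally, since $U=\bigotimes_{k=1}^l D^{(k)}$, the universal property of the tensor product (over $\Z$) provides a unique ring homomorphism $\eta=\theta_1\tensor\cdots\tensor\theta_l:U\to R$ whose composition with each canonical inclusion $D^{(k)}\to U$ equals $\theta_k$. From this description one reads off everything required: $\eta(t^{(k)})=\theta_k(t^{(k)})=r_k$ for every $k$; $\eta(u)=\eta(f^{(1)})\cdots\eta(f^{(l)})=b_1\cdots b_l=b$; and since $\wt Y=\sum_{k=1}^l Y^{(k)}$ and each $\eta(Y^{(k)})=\theta_k(Y^{(k)})\subseteq\mf q$, also $\eta(\wt Y)\subseteq\mf q$. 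There is no genuine obstacle: the definition of $\wt E$ has been engineered precisely so as to supply a factorization along every unimodular sequence of length $l$, and Proposition~\ref{versal} handles each factor of the tensor product, so the lemma reduces to assembling these pieces via the tensor product universal property.
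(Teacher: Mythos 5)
Your argument is correct and follows the same route as the paper's own proof: factor $b$ as $b_1\cdots b_l$ with $b_k\in G(R,r_k\mf q)$ using the definition of $\wt E$, apply Proposition~\ref{versal} to each $D^{(k)}$ to obtain the $\theta_k$, and assemble them via the universal (coproduct) property of $U=\bigotimes_k D^{(k)}$. Nothing is missing.
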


\begin{proof}
By definition of $\wt E(R,\mf q)$ we can write $b$ as a product
of $b^{(k)}\in G(R,r_k\mf q)$ as $k$ ranges from $1$ up to $l$.
By Property~\ref{univ} for each $k=1,\dots,l$ there exists a homomorphism
$\theta_k:D^{(k)}\to R$ sending $t^{(k)}$ to $r_k$ and $f^{(k)}$ to $b^{(k)}$.
Moreover, by Proposition~\ref{versal} $\theta_k(y_{ij}^{(k)})\in\mf q$.
By the universal property of tensor product there exists a unique homomorphism
$\eta:U\to R$ such that each $\theta_k$ factors through $\eta$.
It follows that $\eta$ satisfies the conditions of the lemma.
\end{proof}

\section{Width of commutators}

Second important property of the extended elementary group is the
commutator formula from the following theorem.
We prove it together with the simplest applications of our constructions
for the width of commutators.
Let $\Sigma$ be a functorial generating set for the relative elementary subgroup
(it is defined before corollary~\ref{length0}).

\begin{thm}\label{CommForm}
Let $\q$ be an ideal of a ring $R$. Suppose that $G$ is simply connected. Then
$$
[G(R),\wt E(R,\q)]\le E(R,\q).
$$

Moreover, there exists a constant $L\in\N$ such that for any ring $R$,
ideal $\q$ of $R$, $a\in G(R)$, and $b\in \wt E(R,\q)$ the width of the
commutator $[a,b]$ with respect to $\Sigma(R,\q)$ is at most $L$.
\end{thm}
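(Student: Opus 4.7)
The plan is to exhibit a single generic commutator $[g,u]$ inside an elementary subgroup of bounded width, and then specialize via Lemma~\ref{tildeversal} and Proposition~\ref{versal} to obtain both the inclusion and the uniform length bound simultaneously.

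Let $s_1,\dots,s_l\in A$ be the unimodular sequence of Corollary~\ref{sc-loc}, and for each $k$ apply Key Lemma~\ref{key} to $(A,g,s_k)$ to obtain an exponent $m_k$; set $m=\max_k m_k$. Consider the quotient
$$\bar R=A\otimes U\big/\bigl(t^{(k)}-s_k^m:k=1,\dots,l\bigr),$$
and let $\bar Y$ denote the image of $\wt Y$ in $\bar R$. In $G(\bar R)$ one has $g$ coming from the natural map $A\to\bar R$ and $u=f^{(1)}\cdots f^{(l)}$ coming from $U\to\bar R$. Since the matrix entries of $\pi(f^{(k)})-e$ lie in $t^{(k)}Y^{(k)}$, in $\bar R$ we have $f^{(k)}\in G(\bar R,s_k^m Y^{(k)})\le G(\bar R,s_k^{m_k}\bar Y)$. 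Applying Key Lemma~\ref{key} with the natural map $\ph:A\to\bar R$ and ideal $\bar Y$ yields $[g,f^{(k)}]\in E(\bar R,\bar Y)$ for each $k$. Lemma~\ref{CommId} then gives
$$[g,u]=\prod_{k=1}^l[g,f^{(k)}]^{d_k},\qquad d_k=\Bigl(\prod_{i<k}f^{(i)}\Bigr)^{-1},$$
and the standard commutator formula~\ref{stcommform}, which ensures $E(\bar R,\bar Y)$ is normal in $G(\bar R)$, places this product in $E(\bar R,\bar Y)$. As a fixed element of this elementary subgroup, $[g,u]$ has some definite finite width $L$ with respect to $\Sigma(\bar R,\bar Y)$.

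To specialize, given $a\in G(R)$ and $b\in\wt E(R,\q)$, view $a$ as a homomorphism $a:A\to R$; then $(a(s_1)^m,\dots,a(s_l)^m)$ is unimodular in $R$, so by Definition~\ref{defEtilde} we can write $b=b_1\cdots b_l$ with $b_k\in G(R,a(s_k)^m\q)$. Proposition~\ref{versal} supplies $\theta_k:D^{(k)}\to R$ sending $t^{(k)}\mapsto a(s_k)^m$, $f^{(k)}\mapsto b_k$, and each $y^{(k)}_{ij}$ into $\q$. These assemble into $\theta:U\to R$, and the combined map $a\tensor\theta:A\otimes U\to R$ annihilates each $t^{(k)}-s_k^m$ and therefore factors through a homomorphism $\tau:\bar R\to R$ with $\tau(g)=a$, $\tau(u)=b$, $\tau(\bar Y)\subseteq\q$. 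Thus $\tau$ is a morphism of ideal pairs $(\bar R,\bar Y)\to(R,\q)$ carrying $[g,u]$ to $[a,b]$, and functoriality of $\Sigma$ delivers $[a,b]\in E(R,\q)$ with $\Sigma(R,\q)$-width at most $L$, proving both assertions.

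The main obstacle is arranging the generic setup so that the Key Lemma applies uniformly. The element $u$ does not lie in $\wt E$ over $A\otimes U$, but the identifications $t^{(k)}=s_k^m$ imposed in passing to $\bar R$ force each $f^{(k)}$ into a congruence subgroup of ideal $s_k^{m_k}\bar Y$ as required. Once this quotient is in hand the length bound comes for free by functoriality, avoiding any need to propagate widths through the splitting principle used inside the Key Lemma's proof.
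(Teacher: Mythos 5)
Your proof is correct and follows essentially the same strategy as the paper's. You take the same unimodular sequence from Corollary~\ref{sc-loc}, apply the Key Lemma to each $s_k$, pass to the quotient ring obtained by imposing $t^{(k)}=s_k^{m}$, show that each $[g,f^{(k)}]$ lands in the relative elementary group, and conclude via Lemma~\ref{CommId}, then specialize through Lemma~\ref{tildeversal}/Proposition~\ref{versal}. Two small streamlinings you make are both valid and slightly tidier than the paper: you replace the individual $m_k$ by $m=\max_k m_k$, which works because $s_k^m\q\subseteq s_k^{m_k}\q$; and you read off the constant $L$ directly as the width of $[g,u]$ in $(\bar R,\bar Y)$ rather than first lifting $[g,u]$ to an element $v\in E(U\otimes A,\wt Y\otimes A)$ as the paper does, then pushing $v$ forward --- your version avoids that intermediate lift since your map $\tau:\bar R\to R$ is a morphism of ideal pairs carrying the generic commutator to $[a,b]$.
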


\begin{proof}
By Lemma~\ref{sc-loc} there exists a unimodular sequence $s_1,\dots,s_l\in A$
such that $\lambda_{s_k}(g)\in E(A_{s_k})$. For each $k=1,\dots,l$ take $m_k\in\N$
satisfying conditions of Key Lemma~\ref{key} with $R=A$, $a=g$, and $s=s_k$.
Denote by $T$ the ideal of $U\otimes A$ generated by $t^{(k)}-s_k^{m_k}$ for
all $k=1,\dots,l$ (as usual, we identify elements of $U$ and $A$ with their
images in $U\otimes A$).
Denote by $R'$ the quotient ring $U\otimes A/T$.
Let $\ph:A\to R'$ and $\psi:U\to R'$ be the canonical homomorphisms.
Note that $\psi\tensor\ph$ is the reduction homomorphism modulo $T$,
in particular it is surjective.
Put $h^{(k)}=\psi(f^{(k)})$ and $Y'=\psi\tensor\ph(\wt Y\otimes A)$.
Since $f^{(k)}\in G(D^{(k)},t^{(k)}Y^{(k)})$ and $\psi(t^{(k)})=\ph(s_k)^{m_k}$,
we have $h^{(k)}\in G(R',\ph(s_k)^{m_k}Y')$. By the choice of $m_k$ we have
$[\ph(g),h^{(k)}]\in E(R',Y')$. Since $E(R',Y')$ is normal in $G(R')$, commutator
identity~\ref{CommId} implies that $[\ph(g),\psi(u)]\in E(R',Y')$.
Therefore, $[g,u]\in vG(U\otimes A,T)$ for some $v\in E(U\otimes A,\wt Y\otimes A)$.
Denote by $L$ the width of $v$ with respect to $\Sigma(U\otimes A,\wt Y\otimes A)$.

Now, we construct a homomorphism $\pi:U\otimes A\to R$ that
maps $g$ to $a$, $u$ to $b$, $\wt Y\otimes A$ to $\q$, and $T$ to zero.
Recall, that $a$ itself is a ring homomorphism $A\to R$ such that $a(g)=a$.
For all $k=1,\dots,l$ denote $r_k=a(s_k^{m_k})$. Since $s_1,\dots,s_l$ is a unimodular
sequence, $s_1^{m_1},\dots,s_l^{m_l}$ is also unimodular (otherwise they were contained in
a maximal ideal, but then $s_1,\dots,s_l$ were contained in the same ideal, a contradiction).
Hence, its image $r_1,\dots,r_l$ is unimodular as well.
By Lemma~\ref{tildeversal} there exists a homomorphism $\eta:U\to R$
such that $\eta(u)=b$,
$\eta(t^{(k)})=r_k$ for all $k=1,\dots,l$ and $\eta(\wt Y)\subseteq\mf q$.
Put $\pi=\eta\tensor a$. Then $\pi(g)=a(g)=a$, $\pi(u)=\eta(u)=b$,
$\pi(\wt Y\otimes A)=\eta(\wt Y)\subseteq\q$, and
$\pi(t^{(k)}-s_k^{m_k})=\eta(t^{(k)})-a(s_k^{m_k})=0$ as required.
Finally,
$$
[a,b]=\pi\bigl([g,u]\bigr)=\pi\bigl(v\bigr)\in\pi\bigl(E(U\otimes A,\wt Y\otimes A)\bigr)\le
E(R,\q)
$$
and the width of $[a,b]$ with respect to $\Sigma(R,\q)$ is bounded by the width of $v$ with respect to
$\Sigma(U\otimes A,\wt Y\otimes A)$.
\end{proof}

The following statement is an immediate corollary of the theorem.

\begin{cor}\label{commlength}
Let $G$ be a simply connected Chevalley group.
There exists an integer $L$ depending only on $G$ satisfying the following property:
Given $a\in G(R)$ and $b\in E(R)$ the commutator $[a,b]$ can be written as a product of
at most $L$ elementary root unipotent elements.
\end{cor}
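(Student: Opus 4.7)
The plan is to specialize Theorem~\ref{CommForm} to the case $\q=R$. Two observations make this immediate. First, by Lemma~\ref{Etilde} we have $E(R)=E(R,R)\le\wt E(R,R)$, so any $b\in E(R)$ automatically lies in $\wt E(R,R)$, and Theorem~\ref{CommForm} applies to $[a,b]$ for arbitrary $a\in G(R)$. Second, I would feed the theorem the standard functorial generating set exhibited before Corollary~\ref{length0}, namely
$$
\Sigma(R,\q)=\{x_\alpha(p)^{x_{-\alpha}(r)}\mid\alpha\in\Phi,\ p\in\q,\ r\in R\},
$$
which spans $E(R,\q)$ and is patently functorial in $(R,\q)$.

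The theorem will then produce a constant $L_0=L_0(G)$ such that for every ring $R$, every commutator $[a,b]$ with $a\in G(R)$ and $b\in E(R)$ is a product of at most $L_0$ elements of $\Sigma(R,R)$. To conclude I would expand each such generator via the identity
$$
x_\alpha(p)^{x_{-\alpha}(r)}=x_{-\alpha}(-r)\,x_\alpha(p)\,x_{-\alpha}(r),
$$
presenting it as a product of exactly three elementary root unipotents. Hence $[a,b]$ becomes a product of at most $3L_0$ elementary root unipotents, and setting $L=3L_0$ yields a constant depending only on~$G$.

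I do not expect any genuine obstacle here: the content of the corollary is already packaged inside Theorem~\ref{CommForm}, and the only bookkeeping left is the choice of a functorial generating set whose elements are themselves short products of elementary root unipotents. The choice above is the natural one, and any other functorial generating set for which every generator expands into a bounded number of root unipotents would work equally well.
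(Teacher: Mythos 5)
Your proposal is correct and is exactly the argument the paper intends: the paper states the corollary as an "immediate corollary" of Theorem~\ref{CommForm}, and you have simply spelled out the details — specializing to $\q=R$, invoking Lemma~\ref{Etilde} to see $E(R)\le\wt E(R,R)$, and expanding each generator $x_\alpha(p)^{x_{-\alpha}(r)}$ of the standard functorial generating set into three elementary root unipotents.
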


\section{Relative Key Lemma}

In this section we prepare technical tools for relative commutator formula~\ref{RelThm}
by proving a relative analog of Lemma~\ref{key}.
Let $\q$ be an ideal of  a ring $R$ and $t\in R$.
Denote by $E(tR,t\q)$ the normal closure of $E(t\q)$ in $E(tR)$.
The following result is a consequence of~Lemma~7.1 of~\cite{StepComput}.

\begin{lem}\label{relEE}
$\EE(R,\q,t^{27}R)\le E(t\q,tR)$.
\end{lem}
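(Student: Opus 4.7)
The plan is to treat the two pieces of $\EE(R,\q,t^{27}R)=E(R,\q t^{27}R)\cdot[E(R,\q),E(R,t^{27}R)]$ separately, using Lemma~\ref{EE} as the main tool and exploiting the exponent $27=3^{3}$ as a reservoir of three powers of $t$ to spend in turn.

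For the first piece, note that $\q t^{27}R=t^{27}\q\subseteq t^{3}\q$, hence $E(R,\q t^{27}R)\le E(R,t^{3}\q)$. It then suffices to establish an ``ideal-tracked'' refinement of Lemma~\ref{EE}: for any ideal $\mf a$ of $R$,
\[
E(R,t^{3}\mf a)\le E(tR,t\mf a).
\]
This follows by mimicking the argument of Lemma~\ref{EE} while keeping track of the extra ideal factor: a generator $x_\alpha(t^{3}q)^{g}$ with $q\in\mf a$ and $g\in E(R)$ is rewritten as a product of $x_\beta(tr')^{h}$ with $r'\in\mf a$ and $h\in E(tR)$, which are generators of $E(tR,t\mf a)$.

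For the commutator piece, first spend one factor of~$3$ by applying Lemma~\ref{EE} with $t^{9}$ in place of $t$ to obtain $E(R,t^{27}R)\le E(t^{9}R)$. Using the identity $[a^{g_{1}},b^{g_{2}}]=[a,b^{g_{2}g_{1}^{-1}}]^{g_{1}}$ together with Lemma~\ref{CommId}, any generator of $[E(R,\q),E(R,t^{27}R)]$ becomes a product of conjugates of pure commutators $[x_\alpha(q),x_\beta(t^{9}s)]$, all conjugated on the outside by $g_{1}\in E(R)$. The Chevalley commutator formula expands each such commutator as a product of $x_\gamma(c\cdot q^{i}(t^{9}s)^{j})$ with $i,j\ge 1$, i.e.\ an element of $E(t^{9}\q)$. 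Spending a second factor of~$3$, the ideal-tracked refinement applied to $t^{9}\q=t^{3}(t^{6}\q)$ puts this in $E(tR,t^{7}\q)$; the inner conjugations, being by elements of $E(t^{9}R)\le E(tR)$, preserve this subgroup by normality.

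The main obstacle, as always, is the outer conjugation by $g_{1}\in E(R)$, since $E(tR,t\q)$ is normal only in $E(tR)$, not in $E(R)$. This is where the third factor of~$3$ is spent: a generator $x_\gamma(t^{7}q')^{h}$ of $E(tR,t^{7}\q)$ conjugated by $g_{1}$ becomes $x_\gamma(t^{7}q')^{hg_{1}}\in E(R,t^{7}\q)$, and the ideal-tracked refinement applied once more -- with $\mf a=t^{4}\q$, so that $t^{7}\q=t^{3}\cdot t^{4}\q$ -- places this inside $E(tR,t^{5}\q)\le E(tR,t\q)$. This three-stage use of Lemma~\ref{EE} is the reason for the precise exponent~$27$, and the precise bookkeeping behind the ideal-tracked refinement -- exactly the ``yoga of commutators'' the introduction tries to bypass elsewhere -- is presumably the substance of the computation carried out in~\cite{StepComput}.
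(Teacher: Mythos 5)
The paper does not actually prove this lemma: it is imported verbatim from \cite{StepComput}, so there is no in-paper argument for your proposal to match and I can only judge it on its own merits.

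Your overall shape is plausible: split $\EE(R,\q,t^{27}R)$ into $E(R,\q t^{27}R)$ and $[E(R,\q),E(R,t^{27}R)]$, and spend the three cubes hidden in $27=3^{3}$ one at a time through Lemma~\ref{EE}. But the load-bearing step, the ``ideal-tracked refinement'' $E(R,t^{3}\mf a)\le E(tR,t\mf a)$, is asserted rather than proved, and it does not follow formally from Lemma~\ref{EE}: that lemma controls only the ring ideal $t^{3}R$ and says nothing about where an auxiliary ideal $\mf a$ ends up. Establishing the refinement means redoing the root-element conjugation calculus from scratch and tracking the factor of $\mf a$ through it --- in particular through the $\alpha+\beta=0$ case, which the Chevalley commutator formula you invoke for the second piece does not cover and which is exactly where the extra powers of $t$ get consumed. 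You concede as much in your closing sentence, but that concession means the proposal is a plan for a proof, not a proof: the one genuinely computational step is precisely the content of the cited reference, and deferring to it makes the argument circular. A secondary imprecision: after applying $[a^{g_{1}},b^{g_{2}}]=[a,b^{g_{2}g_{1}^{-1}}]^{g_{1}}$ and Lemma~\ref{CommId} you obtain \emph{conjugates} (by $c_{k}\in E(t^{9}R)$) of commutators $[x_{\alpha}(q),x_{\beta}(t^{9}s)]$, and reducing those to elements of $E(t^{9}\q)$ once more requires both the commutator formula and a separate argument for opposite roots --- the same hard case as above. So the structure is sound, but the essential computation is missing.
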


\begin{proof}
Let $x$ be an independent variable. By definition
$$
\EE(R[x],\q[x],x^{27}R)=E(R[x],x^{27}\q[x])\cdot[E(R[x],\q[x]),E(R[x],x^{27}R[x])].
$$
The first part of the proof of~\cite[Lemma~7.1]{StepComput} shows that
the second factor is contained in $E(R[x],x^3\q[x])$. Clearly, the first factor lies in
the same group. In the second part of the proof of~\cite[Lemma~7.1]{StepComput}
the inclusion $E(R[x],x^3\q[x])\le E(xR[x],x\q[x])$ is obtained.
Thus, $\EE(R[x],\q[x],x^{27}R[x])\le E(xR[x],x\q[x])$.

The evaluation map $R[x]\to R$ sending $x$ to $t$ is surjective and the functor $\EE$
preserves surjective maps. Applying the evaluation map to the above inclusion we get the result.
\end{proof}

The following statement is a suitable generalization of the relative dilation principle
proved in~\cite{ApteChattRao}, \cite{ApteStep}, and~\cite{StepComput}.

\begin{lem}\label{ExtRelDilation}
Let $\q$ be an ideal of  a ring $R$, $s,t\in R$, and $a\in G(R,tR)$.
Suppose that $\lambda_s(a)\in\EE(R_s,tR_s,\q)$.
Then there exists $m\in\N$ such that $\rho(a)\in E\bigl(R/(t-s^m),\rho(\q)\bigr)$, where
$\rho=\rho_{(t-s^m)}:R\to R/(t-s^m)$ is the reduction homomorphism.
\end{lem}

\begin{proof}
The proof is essentially the same as for Lemma~\ref{exdilation} using Lemma~\ref{relEE}
instead of Lemma~\ref{EE}.
\end{proof}

Now we prove the relative commutator formula which is a common generalization of
commutator formulas~\ref{stcommform}. By different methods and in different settings
it was obtained in~\cite{VavStepCommForm0}, \cite{VavStepCommForm},
and~\cite{HazZhangCommForm}. Since we do not want to exclude the cases where the elementary
group is not perfect, we give yet another proof based on splitting principle~\ref{splitting}.

\begin{lem}\label{RelCommForm}
Let $\mf a$ and $\mf b$ be ideals of a ring $R$. Then
$$
[E(R,\mf a),G(R,\mf b)]\le\EE(R,\mf a,\mf b).
$$
\end{lem}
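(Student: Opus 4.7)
The plan is to mimic the proof of Proposition~\ref{stcommform}, but instead of applying the splitting principle separately with respect to $tA[t]$ and $I[t]$ to land in $E$, use a single application of \ref{splitting} that combines both splittings to land directly in $\EE$.

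First I would form the generic commutator $h = [x_\alpha(t), g] \in G(A[t])$, where $t$ is an independent variable, $A = \Z[G]$, and $g$ is the generic element of $G$ viewed inside $G(A[t])$. Two applications of the standard commutator formula~\ref{stcommform} in the ring $A[t]$ give membership in $E$: since $x_\alpha(t) \in E(A[t])$ and $g \in G(A[t], I[t])$, we get $h \in E(A[t], I[t])$; symmetrically, since $x_\alpha(t) \in E(A[t], tA[t])$ and $g \in G(A[t])$, we get $h \in E(A[t], tA[t])$. In particular $h$ lies in $E(A[t], I[t]) \cap G(A[t], tA[t])$.

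The key step is to apply the splitting principle~\ref{splitting} with $C = A[t]$, splitting ideal $\mf a = tA[t]$ (whose complement $A$ is the image of the retraction $t \mapsto 0$), $\mf b' = I$, and $\mf b = I \cdot A[t] = I[t]$. This gives $h \in \EE(A[t], tA[t], I[t])$. Now, given any $r \in \mf a$ and $b \in G(R, \mf b)$, I would specialize via the homomorphism $\ph : A[t] \to R$ defined by $\ph|_A = b$ (viewing $b$ as a ring map $A \to R$ with $b(I) \le \mf b$) and $\ph(t) = r$. Then $\ph(tA[t]) \le rR \le \mf a$ and $\ph(I[t]) \le b(I) \le \mf b$, so monotonicity of $\EE$ in its ideal arguments forces $[x_\alpha(r), b] = \ph(h) \in \EE(R, \mf a, \mf b)$.

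Finally, to pass from individual generators $x_\alpha(r)$ with $r \in \mf a$ to arbitrary $a \in E(R, \mf a)$, I would combine the commutator identity~\ref{CommId} with normality of $\EE(R, \mf a, \mf b)$ in $G(R)$. The latter holds because each of $E(R, \mf a \mf b)$, $E(R, \mf a)$, $E(R, \mf b)$ is normal in $G(R)$ by the standard commutator formulas~\ref{stcommform}, so both the product and the commutator $[E(R, \mf a), E(R, \mf b)]$ of normal subgroups are again normal in $G(R)$. Using \ref{CommId} this reduces the commutator $[a_1 a_2, b]$ and the conjugate $[x_\alpha(r)^h, b] = ([x_\alpha(r), hbh^{-1}])^h$ (with $h \in E(R)$ and $hbh^{-1} \in G(R, \mf b)$) to the generator case already handled.

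The main obstacle, modest as it is, is the bookkeeping in the last paragraph: $E(R, \mf a)$ is only the \emph{normal closure} of $E(\mf a)$ in $E(R)$, so one must invoke both forms of the commutator identity (for products and for conjugates), and in each case the fact that $\EE(R, \mf a, \mf b)$ is stable under $G(R)$-conjugation is essential. Once normality of $\EE$ is in hand, the argument closes cleanly without any computation with individual root elements.
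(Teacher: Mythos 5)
Your proof is correct, but it takes a genuinely different route than the paper's. The paper works in $R\otimes A$ and genericizes only the $G(R,\mf b)$-side: it considers the mixed commutator subgroup $[E(R,\mf a),g]$ with $E(R,\mf a)$ left concrete and $g\in G(A,I)$ the generic element mapped into $G(R\otimes A)$. One application of the first standard commutator formula gives $[E(R,\mf a),g]\le E(R\otimes A,\mf a\otimes A)$, normality of congruence subgroups gives $[E(R,\mf a),g]\le G(R\otimes A,R\otimes I)$, and since $R\otimes I$ is a splitting ideal the splitting principle puts this in $\EE(R\otimes A,\mf a\otimes A,R\otimes I)$; applying $\operatorname{id}_R\tensor b$ then yields the inclusion for the \emph{whole} group $E(R,\mf a)$ at once, so no passage from generators is needed. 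You instead genericize both sides simultaneously, forming $[x_\alpha(t),g]\in G(A[t])$ and applying the splitting principle in $A[t]$ (with splitting ideal $tA[t]$ and $\mf b=I[t]$); this is a clean and natural extension of the method of Proposition~\ref{stcommform}, but because the first argument is reduced to a single root element you must then lift back from $x_\alpha(r)$ with $r\in\mf a$ to all of $E(R,\mf a)$. Your handling of that lift is correct and complete: you establish normality of $\EE(R,\mf a,\mf b)$ in $G(R)$ from the standard commutator formulas, handle products via the appropriate commutator identity, and handle conjugate generators via $[x_\alpha(r)^h,b]=\bigl[x_\alpha(r),hbh^{-1}\bigr]^h$ together with normality of $G(R,\mf b)$. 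In short, the paper's route buys a shorter argument with no lifting step, while yours buys tighter uniformity with the earlier generic-element proof at the cost of the final bookkeeping paragraph.
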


\begin{proof}
Consider the ring $R\otimes A$. As before, we identify elements of $R$, $A$, $G(R)$, and $G(A)$
with their canonical images in $R\otimes A$ and $G(R\otimes A)$.
By the first standard commutator formula of Lemma~\ref{stcommform} we have
$[E(R,\mf a),g]\le E(R\otimes A,\mf a\otimes A)$. On the other hand, since $g\in G(A,I)$, we have
$[E(R,\mf a),g]\le G(R\otimes A,R\otimes I)$. Since $R\otimes I$ is a splitting ideal of
$R\otimes A$, by Lemma~\ref{splitting} we have $[E(R,\mf a),g]\le\EE(R\otimes A,\mf a\otimes A,R\otimes I)$.

Now, let $b\in G(R,\mf b)$. Applying $\operatorname{id}_R\tensor b$ to the inclusion above we obtain
$[E(R,\mf a),b]\le\EE(R,\mf a,\mf b)$ which completes the proof.
\end{proof}

The following statement generalizes Key Lemma~\ref{key}.

\begin{lem}\label{relkey}
Let $R$ be a ring, $a\in G(R,\mf a)$, and $s\in R$. Suppose that $\lambda_s(a)\in E(R_s,\mf a_s)$.
Then there exists $m\in\N$ such that for any ring $R'$, homomorphism $\ph:R\to R'$, and ideals
$\mf a'\supseteq\ph(\mf a)$ and $\mf b'$ of $R'$ we have
$[\ph(a),G(R',\ph(s)^m\mf b')]\le\EE(R',\mf a',\mf b')$.
\end{lem}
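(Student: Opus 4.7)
The plan is to follow the argument of Key Lemma~\ref{key} step by step, substituting the relative commutator formula~\ref{RelCommForm} for the absolute one~\ref{stcommform} and the relative dilation principle~\ref{reldilation} for~\ref{exdilation}; the new hypothesis $a\in G(R,\mf a)$ is propagated through the construction as the ideal $D\otimes\mf a$ of $D\otimes R$, while the role previously played by the fundamental ideal $Y$ is unchanged.

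I would form the commutator $[a,f]\in G(D\otimes R)$, where $f\in G(D,tD)$ is the versal element of Section~\ref{KeyConstruction}. Identifying elements with their canonical images in $D\otimes R$, we have $a\in G(D\otimes R,D\otimes\mf a)$ and $[a,f]\in G(D\otimes R,t(D\otimes R))$. Since $\lambda_s(a)\in E(R_s,\mf a_s)$ by hypothesis, Lemma~\ref{RelCommForm} applied in $(D\otimes R)_s$ gives
$$
\lambda_s\bigl([a,f]\bigr)\in\EE\bigl((D\otimes R)_s,\,D\otimes\mf a_s,\,t(D\otimes R)_s\bigr).
$$
Exploiting the symmetry of $\EE$ in its two ideal arguments, the relative dilation principle~\ref{reldilation} then produces $m\in\N$ such that, writing $\rho=\rho_{(t-s^m)}$,
$$
\rho\bigl([a,f]\bigr)\in E\bigl(D\otimes R/(t-s^m),\,\rho(D\otimes\mf a)\bigr).
$$

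Since $f\in G(D,Y)$ by Lemma~\ref{splitY}, we also have $\rho([a,f])\in G(D\otimes R/(t-s^m),Y')$ with $Y'=\rho(Y\otimes R)$. Intersecting via Corollary~\ref{KeySplitting} places $\rho([a,f])$ in $\EE(D\otimes R/(t-s^m),\rho(D\otimes\mf a),Y')$, so that $[a,f]=v\cdot g_0$ for some $v\in\EE(D\otimes R,D\otimes\mf a,Y\otimes R)$ and $g_0\in G(D\otimes R,(t-s^m))$. Given any $b\in G(R',\ph(s)^m\mf b')$, the ``moreover'' clause of Proposition~\ref{versal} supplies a homomorphism $\theta:D\to R'$ with $\theta(t)=\ph(s)^m$, $\theta(f)=b$, and $\theta(Y)\subseteq\mf b'$. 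Applying $\theta\tensor\ph$: it kills the ideal $(t-s^m)$ and so annihilates $g_0$; it sends $D\otimes\mf a$ into $\ph(\mf a)R'\subseteq\mf a'$ and $Y\otimes R$ into $\mf b'$; and hence it carries $\EE(D\otimes R,D\otimes\mf a,Y\otimes R)$ into $\EE(R',\mf a',\mf b')$. This yields $[\ph(a),b]=(\theta\tensor\ph)([a,f])\in\EE(R',\mf a',\mf b')$, as required.

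The main obstacle is not any new idea but careful bookkeeping: one must consistently interpret the ideals inside $D\otimes R$ rather than inside $D$ or $R$ separately, and use the symmetry of $\EE$ to match the input format required by Lemma~\ref{reldilation}. Everything else is a direct relativization of the proof of Key Lemma~\ref{key}.
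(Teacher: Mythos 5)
Your proof is correct and follows essentially the same route as the paper: relativize the proof of Key Lemma~\ref{key} by replacing the standard commutator formula with Lemma~\ref{RelCommForm} and the absolute dilation principle with Lemma~\ref{reldilation}, track the extra ideal $D\otimes\mf a$ through the reduction and splitting steps, and finish with $\theta\tensor\ph$. The only cosmetic slip is attributing $f\in G(D,Y)$ to Lemma~\ref{splitY}; that fact actually comes from $\pi(f)=e+ty$ (so the entries of $\pi(f)-e$ lie in $Y$), while Lemma~\ref{splitY} merely establishes that $Y$ splits — but this does not affect the argument.
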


\begin{proof}
First, we consider the commutator $[a,f]\in G(D\otimes R)$, where $D$ is the ring,
constructed in section~\ref{KeyConstruction}. The rings $R$ and $D$ will be identified with
there canonical images in $D\otimes R$. Since $f\in tD\otimes R$,
by the relative commutator formula~\ref{RelCommForm} we have
$\lambda_s\bigl([a,f]\bigr)\in\EE\bigl(D\otimes R_s,tD\otimes R_s,D\otimes\mf a\bigr)$.
By Lemma~\ref{ExtRelDilation} there exists $m\in\N$ such that
$\rho\bigl([a,f]\bigr)\in E\bigl(D\otimes R/(t-s^m),\rho(D\otimes \mf a)\bigr)$, where
$\rho=\rho_{(t-s^m)}$ is the reduction homomorphism.
Let $Y'$ be the ideal of $D\otimes R/(t-s^m)$, generated by the images of $y_{ij}$'s.
Since, $f$ vanishes modulo $Y$, we have
$\rho\bigl([a,f]\bigr)\in G\bigl(D\otimes R/(t-s^m),Y'\bigr)$.
By Corollary~\ref{KeySplitting}
we have $\rho\bigl([a,f]\bigr)\in\EE\bigl(D\otimes R/(t-s^m),\rho(D\otimes \mf a),Y'\bigr)$,
hence $[a,f]\in\EE\bigl(D\otimes R,D\otimes \mf a,Y\bigr))G\bigl(D\otimes R,(t-s^m)\bigr)$.

Now, let $b\in G(R',\ph(s)^m\mf b')$. By Proposition~\ref{versal} there exists a homomorphism
$\theta:D\to R'$ such that $\theta(t)=\ph(s)^m$, $\theta(f)=b$, and $\theta(Y)\subseteq\mf b'$.
The homomorphism $\theta\tensor\ph:D\otimes R\to R'$ maps $t$ and $s^m$ to the same element
of $R'$, hence $\theta\tensor\ph\Bigl(G\bigl(D\otimes R,(t-s^m)\bigr)\Bigr)=\{e\}$.
Thus,
$$
[\ph(a),b]=\theta\tensor\ph\bigl([a,f]\bigr)\in
\theta\tensor\ph\bigl(\EE(D\otimes R,D\otimes \mf a,Y)\bigr)\le\EE(R',\mf a',\mf b').
$$
\end{proof}

\section{Relative commutator formulas}

In this section for a simply connected Chevalley group we prove a stronger version of
relative commutator formula~\ref{RelCommForm} together with a bound for the width of
commutators. As a consequence we prove the multi-commutator formula from~\cite{HazZhangMulti}.

First, we define a functorial generating set for the group functor $\EE$.
Clearly, $\EE$ is a functor on the category of pairs of ideals which is defined as follows.
An object is a triple $(R,\mf a,\mf b)$, where $\mf a$ and $\mf b$ are ideals of a ring $R$.
A morphism $\ph:(R,\mf a,\mf b)\to(R',\mf a',\mf b')$ in this category is a ring
homomorphism $\ph:R\to R'$ such that $\ph(\mf a)\subseteq\mf a')$ and
$\ph(\mf b)\subseteq\mf b')$. Let $\Sigma$ be a functor from the category of pairs of ideals
to the category of sets. We call $\Sigma$ a \emph{functorial generating set} for $\EE$ if
there is a natural inclusion of $\Sigma(R,\mf a,\mf b)$ to the underlying
set of $\EE(R,\mf a,\mf b)$ and $\Sigma(R,\mf a,\mf b)$ spans $\EE(R,\mf a,\mf b)$
for any object $(R,\mf a,\mf b)$.
A natural functorial generating set for $[E(R,\mf a),E(R,\mf b)]$ is obtained in~\cite[Theorem~3]{HVZgen}
under additional assumptions, which are necessary exactly for the equality
$\EE(R,\mf a,\mf b)=[E(R,\mf a),E(R,\mf b)].$
Actually, the proof of~\cite[Theorem~3]{HVZgen} works for $\EE(R,\mf a,\mf b)$ instead of
$[E(R,\mf a),E(R,\mf b)]$ and for $\Phi\ne A_1$ without any conditions.

\begin{thm}\label{RelThm}
Let $\mf a$ and $\mf b$ be ideals of a ring $R$. Suppose that $G$ is simply connected. Then
$$
[\wt E(R,\mf a),G(R,\mf b)]\le\EE(R,\mf a,\mf b).
$$

Moreover, given a functorial generating set $\Sigma$ for $\EE$ there exist a constant $L\in\N$
such that for any ring $R$,
ideals $\mf a$ and $\mf b$ of $R$, $a\in G(R,\mf b)$, and $b\in \wt E(R,\mf a)$
the width of the commutator $[a,b]$ with respect to $\Sigma(R,\mf a,\mf b)$ is at most $L$.
\end{thm}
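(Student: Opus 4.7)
The plan is to mimic the proof of Theorem~\ref{CommForm}, substituting the relative Key Lemma~\ref{relkey} for Key Lemma~\ref{key} and tracking which of the two ideals plays which role.  By Corollary~\ref{sc-loc} pick a unimodular sequence $s_1,\dots,s_l\in A$ with $\lambda_{s_k}(g)\in E(A_{s_k},I_{s_k})$.  Since $g\in G(A,I)$, Lemma~\ref{relkey} applied with source data $(A,I,g,s_k)$ furnishes $m_k\in\N$ such that, for every homomorphism $\psi\colon A\to S'$ and every pair of ideals $\mf J'\supseteq\psi(I)$ and $\mf K'$ of $S'$,
\[
  [\psi(g),G(S',\psi(s_k)^{m_k}\mf K')]\le\EE(S',\mf J',\mf K').
\]

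Form the universal ring $R'=(U\otimes A)/T$ exactly as in the proof of Theorem~\ref{CommForm}, where $T$ is the ideal of $U\otimes A$ generated by the elements $t^{(k)}-s_k^{m_k}$; let $\varphi\colon A\to R'$ and $\psi\colon U\to R'$ be the canonical maps, let $\mf J'$ be the ideal of $R'$ generated by $\varphi(I)$, and set $Y'=(\psi\tensor\varphi)(\wt Y\otimes A)$.  Because $\psi(t^{(k)})=\varphi(s_k)^{m_k}$ and $f^{(k)}\in G(D^{(k)},t^{(k)}Y^{(k)})$, the element $h^{(k)}=\psi(f^{(k)})$ lies in $G(R',\varphi(s_k)^{m_k}Y')$, so the displayed inclusion yields $[\varphi(g),h^{(k)}]\in\EE(R',\mf J',Y')$.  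Since $E(R',\mf J')$, $E(R',Y')$, and $E(R',\mf J'Y')$ are all normalized by $G(R')$ by Proposition~\ref{stcommform}, the subgroup $\EE(R',\mf J',Y')$ is itself normal in $G(R')$, and commutator identity~\ref{CommId} promotes the previous inclusion to $[\varphi(g),\psi(u)]\in\EE(R',\mf J',Y')$.  Fixing a functorial generating set $\Sigma$ for $\EE$ (taken symmetric in its two ideal arguments, as in~\cite{HVZgen}) and lifting a product-of-generators expression of this commutator from $R'$ up to $U\otimes A$ produces $v\in\EE(U\otimes A,U\otimes I,\wt Y\otimes A)$, of length at most a constant $L=L(G)$ with respect to $\Sigma(U\otimes A,U\otimes I,\wt Y\otimes A)$, such that $[g,u]\in v\cdot G(U\otimes A,T)$.

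For the specialization, given $a\in G(R,\mf b)$ and $b\in\wt E(R,\mf a)$, note that $(a(s_k^{m_k}))_{k=1}^l$ is unimodular in $R$ since $(s_k)$, hence $(s_k^{m_k})$, is unimodular in $A$.  Lemma~\ref{tildeversal} therefore yields $\eta\colon U\to R$ with $\eta(u)=b$, $\eta(t^{(k)})=a(s_k^{m_k})$, and $\eta(\wt Y)\subseteq\mf a$.  The combined homomorphism $\pi=\eta\tensor a\colon U\otimes A\to R$ sends $T$ to $0$, $g$ to $a$, $u$ to $b$, $U\otimes I$ into $\mf b$, and $\wt Y\otimes A$ into $\mf a$; thus $\pi$ induces a morphism $(U\otimes A,U\otimes I,\wt Y\otimes A)\to(R,\mf b,\mf a)$ of triples and kills $G(U\otimes A,T)$, giving
\[
  [a,b]=\pi([g,u])=\pi(v)\in\EE(R,\mf b,\mf a)=\EE(R,\mf a,\mf b)
\]
with length at most $L$ with respect to $\Sigma(R,\mf a,\mf b)$ by symmetry of $\Sigma$.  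The main subtlety is the lifting step: one must ensure that an explicit $\Sigma(R',\mf J',Y')$-expression for the commutator can be rewritten in $\Sigma(U\otimes A,U\otimes I,\wt Y\otimes A)$ with no increase in length, which is a functorial matter once a convenient generating set is chosen.
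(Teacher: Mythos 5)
Your proof follows the paper's own argument almost verbatim: same unimodular sequence from Corollary~\ref{sc-loc}, same application of the relative Key Lemma~\ref{relkey} with $(A,I,g,s_k)$, same universal ring $U\otimes A/T$, same generic commutator $[g,u]$, same lift to $v$, and same specialization $\pi=\eta\tensor a$. The only cosmetic deviation is that you place $U\otimes I$ in the first slot of $\EE$ and $\wt Y\otimes A$ in the second (the paper uses the opposite order), which forces you to invoke symmetry of both $\EE$ and the generating functor $\Sigma$; keeping the paper's ordering, $\Sigma(U\otimes A,\wt Y\otimes A,U\otimes I)\to\Sigma(R,\mf a,\mf b)$, removes that extra assumption. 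Your closing worry about ``lifting a $\Sigma$-expression with no increase in length'' is also unnecessary: the paper simply takes any $v\in\EE(U\otimes A,\wt Y\otimes A,U\otimes I)$ in the preimage of $[\varphi(g),\psi(u)]$ under the surjection $\psi\tensor\varphi$ (possible because $\EE$ is generated by functorial elements and hence preserves surjections), declares $L$ to be the $\Sigma$-length of that one fixed $v$, and then the length bound on $[a,b]=\pi(v)$ follows from functoriality of $\Sigma$ exactly as in Corollary~\ref{length0}. In short: correct, and essentially the paper's proof.
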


\begin{proof}
By Lemma~\ref{sc-loc} there exists a unimodular sequence $s_1,\dots,s_l\in A$
such that $\lambda_{s_k}(g)\in E(A_{s_k},I_{s_k})$. For each $k=1,\dots,l$ take $m_k\in\N$
satisfying the conditions of Key Lemma~\ref{relkey} with $R=A$, $a=g$, and $s=s_k$.
Denote by $T$ the ideal of $U\otimes A$ generated by $t^{(k)}-s_k^{m_k}$ for
all $k=1,\dots,l$ (as usual, we identify elements of $U$ and $A$ with their
images in $U\otimes A$).
Denote by $R'$ the quotient ring $U\otimes A/T$.
Let $\ph:A\to R'$ and $\psi:U\to R'$ be the canonical homomorphisms.
Note that $\psi\tensor\ph$ is the reduction homomorphism modulo $T$,
in particular it is surjective.
Put $h^{(k)}=\psi(f^{(k)})$ and $Y'=\psi\tensor\ph(\wt Y\otimes A)$
and $I'=\psi\tensor\ph(U\otimes I)$.
Since $f^{(k)}\in G(D^{(k)},t^{(k)}Y^{(k)})$ and $\psi(t^{(k)})=\ph(s_k)^{m_k}$,
we have $h^{(k)}\in G(R',\ph(s_k)^{m_k}Y')$. By the choice of $m_k$ we have
$[\ph(g),h^{(k)}]\in\EE(R',Y',I')$. Since $\EE(R',Y',I')$ is normal in $G(R')$, commutator
identity~\ref{CommId} implies that $[\ph(g),\psi(u)]\in E(R',Y')$.
Therefore, $[g,u]\in vG(U\otimes A,T)$ for some $v\in\EE(U\otimes A,\wt Y\otimes A,U\otimes I)$.
Denote by $L$ the width of $v$ with respect to $\Sigma(U\otimes A,\wt Y\otimes A,U\otimes I)$.

Now, we construct a homomorphism $\pi:U\otimes A\to R$ that
maps $g$ to $a$, $u$ to $b$, $\wt Y\otimes A$ into $\mf a$, $U\otimes I$ into $\mf b$,
and $T$ to zero. Recall, that $a$ itself is a ring homomorphism $A\to R$ such that $a(g)=a$
and $a(I)\subseteq\mf b$.
For all $k=1,\dots,l$ denote $r_k=a(s_k^{m_k})$. Since $s_1,\dots,s_l$ is a unimodular
sequence, the sequence $r_1,\dots,r_l$ is unimodular as well.
By Lemma~\ref{tildeversal} there exists a homomorphism $\eta:U\to R$
such that $\eta(u)=b$,
$\eta(t^{(k)})=r_k$ for all $k=1,\dots,l$ and $\eta(\wt Y)\subseteq\mf q$.
Put $\pi=\eta\tensor a$. Then $\pi(g)=a(g)=a$, $\pi(u)=\eta(u)=b$,
$\pi(\wt Y\otimes A)=\eta(\wt Y)\subseteq\mf a$, $\pi(U\otimes I)=a(I)\subseteq\mf b$
and $\pi(t^{(k)}-s_k^{m_k})=\eta(t^{(k)})-a(s_k^{m_k})=0$ as required.
Finally,
$$
[a,b]=\pi\bigl([g,u]\bigr)=\pi\bigl(v\bigr)\in
\pi\bigl(\EE(U\otimes A,\wt Y\otimes A,U\otimes I)\bigr)\le E(R,\mf a,\mf b)
$$
and the width of $[a,b]$ with respect to $\Sigma(R,\mf a,\mf b)$ is bounded by the width of
$v$ with respect to $\Sigma(U\otimes A,\wt Y\otimes A,U\otimes I)$.
\end{proof}

Next, we observe a handy property of $\wt E$. Here we do not require $G$ to be simply connected.

\begin{lem}\label{tildeEbig}
Let $\mf a$ and $\mf b$ be ideals of a ring $R$. Then
$[\wt E(R,\mf a),G(R,\mf b)]$ and $\EE(R,\mf a,\mf b)$ are contained
in $\wt E(R,\mf a\mf b)$.
\end{lem}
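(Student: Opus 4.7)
The plan is to reduce both inclusions to the single statement $[\wt E(R,\mf a),G(R,\mf b)]\le\wt E(R,\mf a\mf b)$. Once this key inclusion is established, $\EE(R,\mf a,\mf b)\le\wt E(R,\mf a\mf b)$ follows at once: the generator $E(R,\mf a\mf b)$ lies in $\wt E(R,\mf a\mf b)$ by Lemma~\ref{Etilde}, while
$$
[E(R,\mf a),E(R,\mf b)]\le[\wt E(R,\mf a),G(R,\mf b)]\le\wt E(R,\mf a\mf b),
$$
since $E(R,\mf a)\le\wt E(R,\mf a)$ (Lemma~\ref{Etilde}) and $E(R,\mf b)\le G(R,\mf b)$.

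Before attacking the key inclusion, I would record a preliminary normality observation: $\wt E(R,\mf c)$ is a normal subgroup of $G(R)$ for any ideal $\mf c$. Indeed, each principal congruence subgroup $G(R,r_k\mf c)$ is normal in $G(R)$, so the set product $\prod_{k=1}^l G(R,r_k\mf c)$ coincides with the subgroup of $G(R)$ generated by these normal subgroups and is itself normal; intersecting over all unimodular $(r_1,\dots,r_l)$ then preserves normality. This allows me to freely absorb conjugates by elements of $G(R)$ in what follows.

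To prove the key inclusion, fix $x\in\wt E(R,\mf a)$, $y\in G(R,\mf b)$, and an arbitrary unimodular sequence $(r_1,\dots,r_l)\in\Um_l(R)$. By definition of $\wt E$, write $x=\prod_{k=1}^l x_k$ with $x_k\in G(R,r_k\mf a)$. Iterating the identity $[ab,c]=[b,c]^{a^{-1}}[a,c]$, which is the left-argument analogue of Lemma~\ref{CommId}, yields an expansion $[x,y]=\prod_{k=1}^l [x_k,y]^{d_k}$ for suitable elements $d_k\in G(R)$. By Lemma~\ref{GG} each factor $[x_k,y]$ lies in $[G(R,r_k\mf a),G(R,\mf b)]\le G(R,r_k\mf a\mf b)$, and normality of this principal congruence subgroup absorbs the conjugation by $d_k$. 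Hence $[x,y]\in\prod_{k=1}^l G(R,r_k\mf a\mf b)$. As the unimodular sequence was arbitrary, $[x,y]\in\wt E(R,\mf a\mf b)$.

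The only bookkeeping issue is the commutator expansion in the left argument, and this is routine precisely because every term is absorbed into the normal principal congruence subgroup provided by Lemma~\ref{GG}; there is no genuine obstacle.
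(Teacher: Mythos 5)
Your proof is correct and follows essentially the same route as the paper's: fix an arbitrary unimodular sequence, decompose $x\in\wt E(R,\mf a)$ accordingly, expand the commutator, invoke Lemma~\ref{GG} on each factor, and absorb conjugates into the normal principal congruence subgroups $G(R,r_k\mf a\mf b)$; then $\EE\le\wt E$ follows via Lemma~\ref{Etilde}. The only difference is presentational: the paper compresses the commutator expansion and normality bookkeeping into a single chain of subgroup (in)equalities, while you spell out the element-level computation and the normality of $\wt E$ explicitly, which is arguably a bit more careful since the paper's displayed ``equalities'' are really inclusions.
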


\begin{proof}
Let $r_1,\dots,r_l$ be a unimodular sequence. Using Lemma~\ref{GG} we have
$$
[\prod_{k=1}^lG(R,r_k\mf a),G(R,\mf b)]=\prod_{k=1}^l[G(R,r_k\mf a),G(R,\mf b)]\le
\prod_{k=1}^lG(R,r_k\mf a\mf b).
$$
This proves that $[\wt E(R,\mf a),G(R,\mf b)]\le\wt E(R,\mf a\mf b)$.
By Lemma~\ref{Etilde} $E(R,\mf a\mf b)\le\wt E(R,\mf a\mf b)$,
hence $\EE(R,\mf a,\mf b)\le\wt E(R,\mf a\mf b)$.
\end{proof}

The following multi-commutator formula for $G=\GL_n$ was obtained by R.Hazrat and Z.Zuhong
in~\cite{HazZhangMulti} (see Corollary~17 and its proof):
If $\mf a_1,\dots,\mf a_m$ are ideals of a ring $R$, then
\begin{multline*}
[E(R,\mf a_1),G(R,\mf a_2),\dots,G(R,\mf a_m)]=[E(R,\mf a_1),\dots,E(R,\mf a_m)]=\\
[E(R,\mf a_1\dots\mf a_{m-1}),E(R,\mf a_m)].
\end{multline*}
We prove even better result for any simply connected Chevalley group $G$ replacing
$E(R,\mf a_1)$ by $\wt E(R,\mf a_1)$ in the left hand side of the formula.
However, this stronger formula probably does not hold if the torus of $G$ does not belong
to the elementary group.

\begin{thm}\label{multi}
Suppose that $G$ is simply connected. Let $\mf a_1,\dots,\mf a_m$ be ideals of a ring $R$. Then
$$
[\wt E(R,\mf a_1),G(R,\mf a_2),\dots,G(R,\mf a_m)]\le\EE(R,\mf a_1\dots\mf a_{m-1},\mf a_m).
$$
\end{thm}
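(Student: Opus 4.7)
The plan is to prove the theorem by induction on $m$, using Theorem~\ref{RelThm} as the base case and Lemma~\ref{tildeEbig} to pass $\EE$ back into $\wt E$ between inductive steps.

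For $m=2$, the claim reduces to
$[\wt E(R,\mf a_1),G(R,\mf a_2)]\le\EE(R,\mf a_1,\mf a_2),$
which is exactly the first assertion of Theorem~\ref{RelThm}. So there is nothing to prove in the base case.

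For the inductive step, assume $m\ge 3$ and that the formula holds for $m-1$, namely
$[\wt E(R,\mf a_1),G(R,\mf a_2),\dots,G(R,\mf a_{m-1})]\le\EE(R,\mf a_1\dots\mf a_{m-2},\mf a_{m-1}).$
The key observation is that Lemma~\ref{tildeEbig} upgrades an $\EE$-containment into a $\wt E$-containment with the product ideal: it gives
$\EE(R,\mf a_1\dots\mf a_{m-2},\mf a_{m-1})\le\wt E(R,\mf a_1\dots\mf a_{m-1}).$
Combining these two inclusions, the $(m-1)$-fold iterated commutator already sits in $\wt E(R,\mf a_1\dots\mf a_{m-1})$. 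Then I take the outermost commutator with $G(R,\mf a_m)$ and apply Theorem~\ref{RelThm} one more time with the ideals $\mf a=\mf a_1\dots\mf a_{m-1}$ and $\mf b=\mf a_m$, which yields
$[\wt E(R,\mf a_1\dots\mf a_{m-1}),G(R,\mf a_m)]\le\EE(R,\mf a_1\dots\mf a_{m-1},\mf a_m),$
completing the induction.

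There is no substantial obstacle: the entire content has already been packaged into Theorem~\ref{RelThm} (for the single commutator step) and Lemma~\ref{tildeEbig} (for re-entering the extended elementary group at each stage). The only subtle point worth flagging is that the inductive hypothesis delivers its conclusion in $\EE(\cdot,\cdot)$ rather than in $\wt E(\cdot)$, and without Lemma~\ref{tildeEbig} one could not feed it back into the left slot of Theorem~\ref{RelThm}; this is precisely why the definition of $\wt E$ was crafted to be closed under the operations appearing in Lemma~\ref{tildeEbig}. The proof is a few lines of formal bookkeeping once these two ingredients are in hand.
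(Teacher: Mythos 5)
Your proof is correct and uses the same two ingredients as the paper (Lemma~\ref{tildeEbig} and Theorem~\ref{RelThm}), differing only in the packaging: you run a formal induction in which Theorem~\ref{RelThm} is invoked at every step and the second assertion of Lemma~\ref{tildeEbig}, $\EE(R,\mf a,\mf b)\le\wt E(R,\mf a\mf b)$, is used to re-enter $\wt E$, whereas the paper iterates the first assertion of Lemma~\ref{tildeEbig}, $[\wt E(R,\mf a),G(R,\mf b)]\le\wt E(R,\mf a\mf b)$, $m-2$ times (which only rests on the elementary Lemma~\ref{GG}) and then invokes the heavy Theorem~\ref{RelThm} exactly once at the end. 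Both are sound; the paper's version is marginally more economical in its reliance on the localization machinery.
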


\begin{proof}
Applying Lemma~\ref{tildeEbig} $m-2$ times we get
$$
[\wt E(R,\mf a_1),G(R,\mf a_2),\dots,G(R,\mf a_m)]\le[\wt E(R,\mf a_1\dots\mf a_{m-1}),G(R,\mf a_m)].
$$
Now the result follows from Theorem~\ref{RelThm}.
\end{proof}

It follows from the proof of the previous theorem that the width of multi-commutators is
bounded by at most the same constant as the width of relative commutators in Theorem~\ref{RelThm}.

\section[Nilpotent structure of K1]{Nilpotent structure of $\K_1$}

In this section $G$ is assumed to be simply connected.
We prove a multi-relative version of Bak's theorem on the nilpotent structure of
$K_1$. Actually, we concentrate our attention on the induction step of the proof.
The base of induction
follows from bi-relative version of surjective stability which is available at the time only for
the special linear group by A.\,Mason and W.\,Stothers~\cite{MasonStothers}.
Therefore, we use an axiomatic notion
of dimension function istead of actual dimension of a ring. The set of axioms is similar to
those of dimension function of A.\,Bak~\cite{BakDimension,HazDim} for a particular kind of
infrastructure and structure squares.

\begin{defn}\label{DimFunc}
Let $\delta=\delta_G$ be a function from the class of rings to $\Z\cup\{\infty\}$ satisfying
the following properties.

\begin{enumerate}
\item
If $\delta(R)\le1$, then for any ideals $\mf a$ and $\mf b$ of $R$ we have
$[G(R,\mf a),G(R,\mf b)]\le\EE(R,\mf a,\mf b)$.
\item
Suppose that $0<\delta(R)<\infty$.
Given an ideal $\mf a$ of a ring $R$ there exists a multiplicative subset $S$ of $R$ such that
$G(S^{-1}R,S^{-1}\mf a)=E(S^{-1}R,S^{-1}\mf a)$ and $\delta(R/rR)<\delta(R)$ for all $r\in S$.
\item
If $R$ is not Noetherian, then $\delta(R)=\infty$.
\end{enumerate}

Then $\delta$ is called a dimension function for $G$.
\end{defn}

For example, the Bass--Serre dimension $\operatorname{BS-dim} R$ of a Noetherian ring $R$ is a dimension function
in this sense. To be more precise let us recall some definitions. A topological space is called
Noetherian if its closed subsets satisfy the descending chain condition. A space is irreducible if
it is not a union of two nonempty proper closed subsets.
The Krull (or combinatorial) dimension of a topological space is the supremum
of the lengths of proper chains of nonempty closed irreducible subsets.
The following definition was introduced by A.\,Bak in~\cite{BakNonabelian}.
Bass--Serre dimension $d=\operatorname{BS-dim} R$ of a ring $R$ is the smallest integer such that the
maximal spectrum $\Max R$ is a union of a finite number of irreducible subspaces of Krull dimension
not greater than $d$. If there is no such integer, then $\operatorname{BS-dim} R=\infty$. 
Clearly, $\operatorname{BS-dim} R\le\dim\Max R$.
An example where $\operatorname{BS-dim} R\ne\dim\Max R$ is given in~\cite[Ch.~3, Proposition~3.13]{BassBook}.
Put
$$
\delta_{\mathrm{BS}}(R)=\begin{cases}
\operatorname{BS-dim}R,&\text{ if }R\text{ is Noetherian}\\
\infty,&\text{ otherwise}\end{cases}
$$
To check that $\delta_{\mathrm{BS}}$ is a dimension function in our sense we prove the following statement.

\begin{lem}
Let $\mf a$ be an ideal of a semilocal ring $R$.
If $G$ is simply connected, then $G(R,\mf a)=E(R,\mf a)$.
\end{lem}

\begin{proof}
Let $a\in G(R,\mf a)$. Denote by $\mf q$ the Jacobson radical of $R$. Then
$R/\q=\bigoplus_{i=1}^n F_i$ for some fields $F_1,\dots,F_n$. Without loss of generality we may assume
that $\rho_\q(\mf a)=\bigoplus_{i=1}^m F_i$ for some $m\le n$.
Since $G$ is simply connected, $G(F)=E(F)$ for any field $F$.
Since $G$ and $E$ preserve direct sums,
\begin{multline*}
\rho_\q(a)\in G\bigl(R/\q,\rho_\q(\mf a)\bigr)=G(F_1)\times\dots\times G(F_m)=\\
E(F_1)\times\dots\times E(F_m)=E\bigl(R/\q,\rho_\q(\mf a)\bigr)
\end{multline*}
The functor $E$ preserves surjective maps, therefore there exists $a'\in E(R,\mf a)$ such that
$\rho_\q(a)=\rho_\q(a')$. It follows that $a\in a'b$ for some $b\in G(R,\q)$.
On the other hand, $b=a{a'}^{-1}\in G(R,\mf a)$. Thus, $b\in G(R,\q)\cap G(R,\mf a)=G(R,\mf a\cap\q)$.
By~\cite[Proposition~2.3]{AbeSuzuki} $G(R,\mf b)=E(R,\mf b)$ for any radical ideal $\mf b$,
which implies that $b\in E(R,\mf a\cap\q)$ and $a\in E(R,\mf a)$.
\end{proof}

\begin{cor}
The function $\delta_{\mathrm{BS}}$ is a dimension function in the sense of Definition~$\ref{DimFunc}$
for any simply connected Chevalley--Demazure group scheme $G$.

If $G=\SL_n$, then $\delta'_{\mathrm{BS}}(R)=\delta_{\mathrm{BS}}(R)-n+3$ already is a dimension
function for $G$.
\end{cor}

\begin{proof}
Property~\ref{DimFunc}(2) is verified in the proof of~\cite[Lemma~4.17]{BakNonabelian}
and~\ref{DimFunc}(3) holds by definition.
If $\operatorname{BS-dim}R=0$, then $\Max R$ is discrete and finite, i.\,e. $R$
is semilocal. Thus, property~\ref{DimFunc}(1) follows from the previous lemma
and Corollary~\ref{IndStep} below.

Clearly, if~\ref{DimFunc}(2) holds for $\delta$ then it holds also for $\delta'=\delta-k$, where $k$
is a given positive integer. The same is true for the third property of dimension function.
Let $G=\SL_n$. By~\cite[Ch.3, Theorem~3.5]{BassBook}
the stable rank $\operatorname{sr} R$ is bounded by $\operatorname{BS-dim}R+1$.
If $\delta'_{\mathrm{BS}}(R)\le1$, then $\operatorname{BS-dim}R\le n-2$ and $\operatorname{sr} R\le n-1$.
Now, property~\ref{DimFunc}(1) for $\delta'_{\mathrm{BS}}$ follows
from~\cite[Theorem~1.2]{MasonCommBis} by A.\,Mason and W.\,Stothers, see
also~\cite[Corollary~3.9]{MasonStothers}.
\end{proof}

It is expected that in general $\delta_{\mathrm{BS}}(R)-\rank G+3$ is a dimension function
for $G$, but we cannot prove this now.

Fix $G$ and a dimension function $\delta$  for $G$.
For ideals $\mf a$ and $\mf b$ of a ring $R$ define the following group
\begin{multline*}
S_\delta^{(k)}G(R,\mf a,\mf b)=\{h\in G(R,\mf a\mf b)\mid \ph(h)\in\EE(R',\mf a',\mf b')\\
\text{ for all morphisms }\ph:(R,\mf a,\mf b)\to(R',\mf a',\mf b')\text{ such that }\delta(R')\le k\}.
\end{multline*}

Let $d\ge\delta(R)$. Clearly, $S_\delta^{(d)}G(R,\mf a,\mf b)=\EE(R,\mf a,\mf b)$.
A crucial role in the induction step of the proof of Theorem~\ref{NilpotentThm}
plays the group $S_\delta^{(d-1)}G(R,\mf a,\mf b)$. Here we enlarge this group in the
same style as we have enlarged the relative elementary group.

An element $r$ of a ring $R$ is called $\delta$-regular if $\delta(R/r^kR)<\delta(R)$
for all positive integers $k$.
For an ideal $\mf a$ of a ring $R$ define the group
$$
\wt E_\delta(R,\mf a)=\bigcap_r \Bigl(G(R,r\mf a)\wt E(R,\mf a)\Bigr),
$$
where the intersection is taken over all $\delta$-regular elements $r$.

\begin{lem}\label{tildeEdelta}
Let $\mf a$ and $\mf b$ be ideals of a ring $R$ and $d=\delta(R)>0$.
Then $S_\delta^{(d-1)}G(R,\mf a,\mf b)\le\wt E_\delta(R,\mf a\mf b)$.
\end{lem}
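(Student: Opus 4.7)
The plan is to fix a $\delta$-regular element $r \in R$ and produce a decomposition $h = uv$ with $u \in G(R, r\mf a\mf b)$ and $v \in \wt E(R, \mf a\mf b)$; taking the intersection over all such $r$ then yields $h \in \wt E_\delta(R, \mf a\mf b)$ by definition.

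First, since $d = \delta(R)$ is finite, property~(3) of Definition~\ref{DimFunc} forces $R$ to be Noetherian, and the Artin--Rees lemma applied to the ideals $rR$ and $\mf a\mf b$ produces $k \in \N$ with
$$
r^k R \cap \mf a\mf b \subseteq r\,\mf a\mf b.
$$
Because $r$ is $\delta$-regular, $\delta(R / r^k R) < d$, so the defining condition of $S_\delta^{(d-1)}G(R, \mf a, \mf b)$ applied to the reduction morphism
$$
\rho_{r^k}\colon (R, \mf a, \mf b)\to (R/r^k R, \rho_{r^k}\mf a, \rho_{r^k}\mf b)
$$
gives $\rho_{r^k}(h) \in \EE(R/r^k R, \rho_{r^k}\mf a, \rho_{r^k}\mf b)$. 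The generators of this group -- conjugates $w^{-1} x_\alpha(p) w$ with $p$ in $\rho_{r^k}(\mf a\mf b)$, together with commutators of elements of $E(R/r^k R, \rho_{r^k}\mf a)$ with elements of $E(R/r^k R, \rho_{r^k}\mf b)$ -- admit termwise lifts to $\EE(R, \mf a, \mf b)$, simply by lifting each occurring ring element from the relevant quotient ideal to its preimage in $\mf a$, $\mf b$ or $\mf a\mf b$. Hence one can pick $v \in \EE(R, \mf a, \mf b)$ with $\rho_{r^k}(v) = \rho_{r^k}(h)$.

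Set $u = h v^{-1}$. Both $h$ and $v$ lie in $G(R, \mf a\mf b)$ (using Lemma~\ref{GG} to handle the commutator piece of $\EE$), so $u \in G(R, \mf a\mf b)$, while $\rho_{r^k}(u) = e_R$ places $u$ in $G(R, r^k R)$. The identity $G(R, \p) \cap G(R, \q) = G(R, \p \cap \q)$ -- immediate from the characterisation $h \in G(R, \q) \Leftrightarrow h(I) \subseteq \q$ recorded in Section~\ref{schemes} -- together with the Artin--Rees step then places $u$ inside $G(R, r^k R \cap \mf a\mf b) \le G(R, r\mf a\mf b)$. Finally, Lemma~\ref{tildeEbig} gives $v \in \EE(R, \mf a, \mf b) \le \wt E(R, \mf a\mf b)$, completing the decomposition.

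The main obstacle is the mismatch between $r^k R \cap \mf a\mf b$ and $r\,\mf a\mf b$: a direct lift of $\rho_r(h)$ with $k = 1$ only puts $u = hv^{-1}$ in $G(R, rR \cap \mf a\mf b)$, which need not be contained in $G(R, r\mf a\mf b)$. Passing to $\rho_{r^k}$ for $k$ sufficiently large -- legitimate because $r$ is $\delta$-regular -- and invoking Artin--Rees -- legitimate because of the Noetherian hypothesis hidden in property~(3) of Definition~\ref{DimFunc} -- is precisely the move that bridges this gap and makes the lift go through.
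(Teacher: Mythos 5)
Your proposal is correct and follows the same route as the paper: reduce modulo $r^kR$, use $\delta$-regularity to invoke the defining property of $S_\delta^{(d-1)}G$, lift the image back to $\EE(R,\mf a,\mf b)$ using that $\EE$ is surjective on quotients, and observe that the remaining factor lies in $G(R,r^kR)\cap G(R,\mf a\mf b)\le G(R,r\mf a\mf b)$ by the choice of $k$. The only difference is cosmetic: you obtain $r^kR\cap\mf a\mf b\subseteq r\mf a\mf b$ via the Artin--Rees lemma, while the paper derives the same containment by a hands-on argument with the $\bar r$-torsion ideal of $R/\mf a\mf b$, citing the Noetherian hypothesis from Definition~\ref{DimFunc}(3) in exactly the way you do.
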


\begin{proof}
If $\delta(R)=\infty$, then using Lemma~\ref{tildeEbig} we get
$$
S_\delta^{(d-1)}G(R,\mf a,\mf b)=S_\delta^{(\infty)}G(R,\mf a,\mf b)=EE(R,\mf a,\mf b)\le
\wt E(R,\mf a\mf b)\le\wt E_\delta(R,\mf a\mf b).
$$

Now, suppose that $\delta(R)<\infty$.
Let $r\in R$ be a $\delta$-regular element.
Put $\bar r=\rho_{\mf a\mf b}(r)$. Let
$\mf q=\{q\in R/\mf a\mf b\mid q{\bar r}^i=0\text{ for some }i\in\N\}$.
It is easy to see that $\mf q$ is an ideal. Since $\delta(R)<\infty$,
by~\ref{DimFunc}(3) $R/\mf a\mf b$ is Noetherian, therefore $\q$
is finitely generated. Hence there exists $k\in\N$ such that
${\bar r}^kq=0\implies {\bar r}^{k-1}q=0$ (this is the same as saying that ${\bar r}^kR/\mf a\mf b$
injects into $(R/\mf a\mf b)_{\bar r}$, cf.~\cite[Lemma~4.10]{BakNonabelian}).
It follows that if $r^kp\in\mf a\mf b$, then $r^{k-1}p\in\mf a\mf b$ and
$r^kp\in r\mf a\mf b$. Therefore, $\mf a\mf b\cap r^kR\subseteq r\mf a\mf b$.

Let $\rho=\rho_{r^kR}$ be the reduction homomorphism. Since $\delta(R/r^kR)\le d-1$,
we have
\begin{align*}
\rho\bigl(&S_\delta^{(d-1)}G(R,\mf a,\mf b)\bigr)\le\EE\bigl(R/r^kR,\rho(\mf a),\rho(\mf b)\bigr)
\text{ and}\\
&S_\delta^{(d-1)}G(R,\mf a,\mf b)\le G(R,r^kR)\EE(R,\mf a,\mf b).
\end{align*}

By definition, $S_\delta^{(d-1)}G(R,\mf a,\mf b)$ and $\EE(R,\mf a,\mf b)$ are contained in
$G(R,\mf a\mf b)$, therefore we may replace $G(R,r^kR)$ by $G(R,r^kR\cap\mf a\mf b)$
in the inclusion above. By the choice of $k$ and by Lemma~\ref{tildeEbig} we have
$$
S_\delta^{(d-1)}G(R,\mf a,\mf b)\le G(R,r\mf a\mf b)\wt E(R,\mf a\mf b).
$$
Since $r$ was chosen arbitrarily, this inclusion implies the result.
\end{proof}

The following result improves the commutator formula from Theorem~\ref{RelThm}.

\begin{lem}\label{nilpotent}
Let $\mf a$ and $\mf b$ be ideals of a ring $R$ and $\delta(R)<\infty$. Suppose that $G$ is simply connected.
Then
$$
[\wt E_\delta(R,\mf a),G(R,\mf b)]\le \EE(R,\mf a,\mf b).
$$
\end{lem}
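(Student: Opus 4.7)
The plan is to combine Key Lemma~\ref{relkey} (applied to $b$, not $a$) with the defining decomposition of $\wt E_\delta(R,\mf a)$ and Theorem~\ref{RelThm}. First, I would apply Definition~\ref{DimFunc}(2) to the ideal $\mf b$: it yields a multiplicative subset $S\subseteq R$ such that $G(S^{-1}R, S^{-1}\mf b) = E(S^{-1}R, S^{-1}\mf b)$ and $\delta(R/rR)<\delta(R)$ for every $r\in S$. Because $S$ is multiplicative, each $r\in S$ is automatically $\delta$-regular (since $r^k\in S$ for all $k$). As $b\in G(R,\mf b)$, the image $\lambda_S(b)$ lies in $E(S^{-1}R, S^{-1}\mf b)$, and a standard colimit/denominator-clearing argument produces a single $s\in S$ with $\lambda_s(b)\in E(R_s,\mf b_s)$.

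Next, I would feed this $s$ and $b$ into Key Lemma~\ref{relkey}, with the roles of the two ideals interchanged (legitimate because $\EE(R,\mf a,\mf b)=\EE(R,\mf b,\mf a)$). This produces $m\in\N$ with
$$
[b, G(R, s^m\mf a)]\le\EE(R,\mf a,\mf b).
$$
Since $s^m$ is again $\delta$-regular, the definition of $\wt E_\delta(R,\mf a)$ furnishes a factorization $a = a_1 e_1$ with $a_1\in G(R, s^m\mf a)$ and $e_1\in\wt E(R,\mf a)$.

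To finish, I would expand via the identity $[xy,z] = x[y,z]x^{-1}\cdot[x,z]$:
$$
[a,b] = a_1[e_1,b]a_1^{-1}\cdot[a_1,b].
$$
By Theorem~\ref{RelThm}, $[e_1,b]\in\EE(R,\mf a,\mf b)$, and since $\EE(R,\mf a,\mf b)$ is $G(R)$-normal (the factor $E(R,\mf a\mf b)$ is normalized by $G(R)$ via Proposition~\ref{stcommform}, while $[E(R,\mf a),E(R,\mf b)]$ is normalized by $G(R)$ because both $E(R,\mf a)$ and $E(R,\mf b)$ are), the conjugate $a_1[e_1,b]a_1^{-1}$ remains in $\EE(R,\mf a,\mf b)$. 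The factor $[a_1,b] = [b,a_1]^{-1}$ lies in $\EE(R,\mf a,\mf b)$ by the displayed Key Lemma consequence. Hence $[a,b]\in\EE(R,\mf a,\mf b)$, as required.

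The main obstacle is the bookkeeping around Definition~\ref{DimFunc}(2): I must ensure that a single denominator $s\in S$ suffices to realize the localized expression of $b$, and that $s^m$ is still $\delta$-regular so that the Key Lemma's output congruence level $s^m\mf a$ matches exactly the flexibility built into $\wt E_\delta(R,\mf a)$. Note that no induction on $\delta(R)$ is needed; the argument trades one dimension drop for a single denominator in $R$, with all of the dimension-dependent work already packaged inside Key Lemma~\ref{relkey} and the definition of $\wt E_\delta$.
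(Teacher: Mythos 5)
Your proof is correct and follows essentially the same route as the paper: apply Definition~\ref{DimFunc}(2) to $\mf b$, clear a denominator via the direct-limit argument to get a single $s\in S$ with $\lambda_s(b)\in E(R_s,\mf b_s)$, invoke Key Lemma~\ref{relkey} with $\ph=\operatorname{id}$, observe that $s^m$ is $\delta$-regular so $\wt E_\delta(R,\mf a)\le G(R,s^m\mf a)\wt E(R,\mf a)$, and finish with Theorem~\ref{RelThm}. The only difference is that you spell out the commutator identity $[a_1e_1,b]=a_1[e_1,b]a_1^{-1}[a_1,b]$ and the normality of $\EE(R,\mf a,\mf b)$, which the paper leaves implicit in ``the result follows from Theorem~\ref{RelThm}''.
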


\begin{proof}
If $\delta(R)\le 1$, then the claim follows from~\ref{DimFunc}(1).
Otherwise, by condition~\ref{DimFunc}(2) we have $G(S^{-1}R,S^{-1}\mf b)=E(S^{-1}R,S^{-1}\mf b)$
for some multiplicative subset $S$ of $R$ consisting of $\delta$-regular elements.
Given $b\in G(R,\mf b)$ we have $\lambda_S(b)\in E(S^{-1}R,S^{-1}\mf b)$. In other
words $\lambda_S(b)$ is a product of elements of the form $b_i=x_{\alpha_i}(t_i/s_i)^{c_i}$, where
$c_i=\prod_j x_{\beta_{ij}}(t_{ij}/s_{ij}$, $t_i\in\mf b$, $t_{ij}\in R$, and
$s_i,s_{ij}\in S$. Let $s$ be the product of all denominators $s_i$ and $s_{ij}$.
Then $b'=\prod_ib_i\in E(R_s,\mf b_s)$. Put $b''=\lambda_s(b)$. Clearly, $b'$ and $b''$
have the same image in $S^{-1}R$. Since $G$ is an affine scheme,
there exist $t\in S$ such that $\lambda_t(b')=\lambda_t(b'')$. Thus, for $r=st$ we have
$\lambda_r(b)=\lambda_t(b'')\in E(R_r,\mf b_r)$.

By Lemma~\ref{relkey} with identity homomorphism $\ph$ there exists a positive integer $m$ such
that $[G(R,r^m\mf a),b]\le\EE(R,\mf a,\mf b)$.
It follows from~\ref{DimFunc}(2) that $r^m$ is $\delta$-regular.
Therefore, $\wt E_\delta(R,\mf a)\le G(R,r^m\mf a)\wt E(R,\mf a)$.
Now, the result follows from Theorem~\ref{RelThm}.
\end{proof}

\begin{cor}\label{IndStep}
Let $\mf a,\mf b,\mf c$ be ideals of a ring $R$. Then
$$
[S_\delta^{(k)}G(R,\mf a,\mf b),G(R,\mf c)]\le S_\delta^{(k+1)}G(R,\mf a\mf b,\mf c).
$$

In particular, if $\delta(R)=0\implies G(R,\mf a)=E(R,\mf a)$ for any pair $(R,\mf a)$,
then conditions~$\ref{DimFunc}(2)$ and~$(3)$ imply condition~$\ref{DimFunc}(1)$.
\end{cor}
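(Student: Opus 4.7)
My plan is to verify the defining condition of $S_\delta^{(k+1)}$ by testing against an arbitrary morphism of ideal triples and splitting on the dimension of the target. Fix $a\in S_\delta^{(k)}G(R,\mf a,\mf b)$ and $b\in G(R,\mf c)$. Lemma~\ref{GG} immediately puts $[a,b]$ into $G(R,\mf a\mf b\mf c)$, which is the correct ambient group, so it remains to check that for every morphism $\varphi\colon(R,\mf a\mf b,\mf c)\to(R',\mf a'',\mf c')$ with $\delta(R')\le k+1$ one has $[\varphi(a),\varphi(b)]\in\EE(R',\mf a'',\mf c')$. Set $\mf a'=\varphi(\mf a)R'$ and $\mf b'=\varphi(\mf b)R'$; then $\mf a'\mf b'=\varphi(\mf a\mf b)R'\subseteq\mf a''$, so the same ring homomorphism can also be regarded as a morphism $(R,\mf a,\mf b)\to(R',\mf a',\mf b')$. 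Composing this with an arbitrary test morphism out of $(R',\mf a',\mf b')$ at level $k$ shows that $\varphi(a)$ itself lies in $S_\delta^{(k)}G(R',\mf a',\mf b')$.

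Now I split on $\delta(R')$. If $\delta(R')\le k$, the definition of $S_\delta^{(k)}$ gives $\varphi(a)\in\EE(R',\mf a',\mf b')\le E(R',\mf a')\le E(R',\mf a'')$, using normality of $E(R',\mf a')$ in $E(R')$ together with the trivial $E(R',\mf a'\mf b')\le E(R',\mf a')$; the relative commutator formula (Lemma~\ref{RelCommForm}) then produces $[\varphi(a),\varphi(b)]\in\EE(R',\mf a'',\mf c')$. If $\delta(R')=k+1$, Lemma~\ref{tildeEdelta} applied in $R'$ with $d=k+1$ upgrades $\varphi(a)\in S_\delta^{(k)}G(R',\mf a',\mf b')$ to $\varphi(a)\in\wt E_\delta(R',\mf a'\mf b')$, and Lemma~\ref{nilpotent} finishes via
$$
[\wt E_\delta(R',\mf a'\mf b'),G(R',\mf c')]\le\EE(R',\mf a'\mf b',\mf c')\le\EE(R',\mf a'',\mf c').
$$
I expect this second case to be the conceptual heart of the argument: the main obstacle is recognising that Lemma~\ref{tildeEdelta} is exactly the tool that converts an element which is only ``elementary one dimension lower'' into something the nilpotent commutator formula can digest.

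For the ``in particular'' clause I would first observe that, under the hypothesis $\delta(R')=0\Rightarrow G(R',\mf a')=E(R',\mf a')$, the equalities $\EE(R',\mf a',R')=E(R',\mf a')=G(R',\mf a')$ hold for every $R'$ with $\delta(R')=0$, so every element of $G(R,\mf a)$ automatically passes every dimension-zero test and $S_\delta^{(0)}G(R,\mf a,R)=G(R,\mf a)$. The main inclusion, applied to the triple $(\mf a,R,\mf b)$ with $k=0$, then yields $[G(R,\mf a),G(R,\mf b)]\le S_\delta^{(1)}G(R,\mf a,\mf b)$; and if $\delta(R)\le 1$ the right-hand side coincides with $\EE(R,\mf a,\mf b)$ by the general identity $S_\delta^{(d)}=\EE$ at $d=\delta(R)$ recorded after Definition~\ref{DimFunc}. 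This is exactly condition~\ref{DimFunc}(1).
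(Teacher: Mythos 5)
Your reduction to testing $\varphi(a)$ against morphisms of triples and showing $\varphi(a)\in S_\delta^{(k)}G(R',\mf a',\mf b')$ is right, and the case $\delta(R')=k+1$ is handled exactly as the paper does. But the case $\delta(R')\le k$ has a genuine gap. You write $\EE(R',\mf a',\mf b')\le E(R',\mf a')\le E(R',\mf a'')$, but the second inclusion would require $\mf a'\subseteq\mf a''$, whereas the morphism $\varphi\colon(R,\mf a\mf b,\mf c)\to(R',\mf a'',\mf c')$ only guarantees $\mf a'\mf b'=\varphi(\mf a\mf b)R'\subseteq\mf a''$. When $\mf b$ is a proper ideal, $\mf a'$ need not be contained in $\mf a''$ (already $\mf a=\mf b=2\Z$, $\mf a''=4\Z$ over $R'=\Z$ gives a counterexample). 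Moreover $\EE(R',\mf a',\mf b')$ is not in general contained in $E(R',\mf a'\mf b')$ --- the whole reason for introducing $\EE$ is that $[E(R',\mf a'),E(R',\mf b')]$ can be strictly larger than $E(R',\mf a'\mf b')$ for $\Phi=C_l,G_2$ --- so you cannot push $\varphi(a)$ into $E(R',\mf a'')$, and the appeal to Lemma~\ref{RelCommForm} does not deliver $\EE(R',\mf a'',\mf c')$.

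The fix is easy and shows the case split is unnecessary, which is why the paper omits it. Having $\varphi(a)\in S_\delta^{(k)}G(R',\mf a',\mf b')$, apply Lemma~\ref{tildeEdelta} with $d=\delta(R')$; since $d\le k+1$ one has $d-1\le k$, hence $S_\delta^{(k)}G(R',\mf a',\mf b')\le S_\delta^{(d-1)}G(R',\mf a',\mf b')\le\wt E_\delta(R',\mf a'\mf b')$, and Lemma~\ref{nilpotent} gives $[\varphi(a),\varphi(b)]\in\EE(R',\mf a'\mf b',\mf c')\le\EE(R',\mf a'',\mf c')$ uniformly in $d$. (Alternatively, if you do want a separate low-dimension case, the route there is Lemma~\ref{tildeEbig}, giving $\EE(R',\mf a',\mf b')\le\wt E(R',\mf a'\mf b')$, followed by Theorem~\ref{RelThm}.) Your treatment of the ``in particular'' clause is correct and agrees with the paper.
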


\begin{proof}
Let $\ph:R\to R'$ be a ring homomorphism, $\delta(R')=k+1$, $\mf a'=\ph(\mf a)R'$,
$\mf b'=\ph(\mf b)R'$ and $\mf c'=\ph(\mf c)R'$. By Lemma~\ref{tildeEdelta}
$S_\delta^{(k)}G(R',\mf a',\mf b')\le\wt E_\delta(R',\mf a',\mf b')$.
By Lemma~\ref{nilpotent} we have
\begin{multline*}
\ph\bigl([S_\delta^{(k)}G(R,\mf a,\mf b),G(R,\mf c)]\bigr)\le
[S_\delta^{(k)}G(R',\mf a',\mf b'),G(R',\mf c')]\le\\
[\wt E_\delta(R',\mf a'\mf b'),G(R',\mf c')]\le \EE(R',\mf a'\mf b',\mf c').
\end{multline*}
Now the required inclusion follows from the definition of $S_\delta^{(k+1)}G$.

If $G(R,\mf a)=E(R,\mf a)$ provided $\delta(R)=0$, then
$S_\delta^{(0)}G(R,\mf a,\mf b)=G(R,\mf a\mf b)$. We have already noticed that
$S_\delta^{(1)}G(R,\mf a,\mf b)=\EE(R,\mf a,\mf b)$ if $\delta(R)=1$.
Note that we did not use condition~\ref{DimFunc}(1) up to now, thus the inclusion
in the statement follows from~\ref{DimFunc}(2) and~(3).
Now, with $k=0$ and $\mf b=R$ the inclusion turns to condition~\ref{DimFunc}(1).
\end{proof}

\begin{thm}\label{NilpotentThm}
Let $\mf a_0,\dots,\mf a_d$ be ideals of a ring $R$, where $d\ge1$,
and let $\delta$ be a dimension function for $G$.
Then
$$
[G(R,\mf a_0),G(R,\mf a_1),\dots,G(R,\mf a_d)]\le S_\delta^{(d)}G(R,\mf a_0\dots\mf a_{d-1},\mf a_d).
$$
If $\delta(R)\le d$, then
$$
[G(R,\mf a_0),G(R,\mf a_1),\dots,G(R,\mf a_d)]\le\EE(R,\mf a_0\dots\mf a_{d-1},\mf a_d).
$$
\end{thm}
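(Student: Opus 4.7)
The plan is to prove, by induction on $k\in\{1,\dots,d\}$, the auxiliary inclusion
$$[G(R,\mf a_0),G(R,\mf a_1),\dots,G(R,\mf a_k)]\le S_\delta^{(k)}G(R,\mf a_0\cdots\mf a_{k-1},\mf a_k).$$
Specializing to $k=d$ then finishes the proof, because the identity morphism on $R$ lies in the intersection defining $S_\delta^{(d)}G(R,\mf a,\mf b)$ whenever $\delta(R)\le d$, so $S_\delta^{(d)}G(R,\mf a,\mf b)=\EE(R,\mf a,\mf b)$ (this equality is noted immediately after the definition of $S_\delta^{(k)}G$).

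For the base case $k=1$, Lemma~\ref{GG} already gives $[G(R,\mf a_0),G(R,\mf a_1)]\le G(R,\mf a_0\mf a_1)$, so only functoriality needs to be checked. For any morphism $\ph:(R,\mf a_0,\mf a_1)\to(R',\mf a_0',\mf a_1')$ with $\delta(R')\le 1$ one has
$$\ph\bigl([G(R,\mf a_0),G(R,\mf a_1)]\bigr)\le[G(R',\mf a_0'),G(R',\mf a_1')]\le\EE(R',\mf a_0',\mf a_1')$$
by Definition~\ref{DimFunc}(1) applied to $R'$, so every element of the commutator subgroup lies in $S_\delta^{(1)}G(R,\mf a_0,\mf a_1)$.

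The inductive step from $k$ to $k+1$, with $k+1\le d$, amounts to a single application of Corollary~\ref{IndStep}: since multi-commutators are left-normed, one writes
$$[G(R,\mf a_0),\dots,G(R,\mf a_{k+1})]=\bigl[[G(R,\mf a_0),\dots,G(R,\mf a_k)],G(R,\mf a_{k+1})\bigr],$$
bounds the inner bracket by $S_\delta^{(k)}G(R,\mf a_0\cdots\mf a_{k-1},\mf a_k)$ using the inductive hypothesis, and then invokes Corollary~\ref{IndStep} with $\mf a=\mf a_0\cdots\mf a_{k-1}$, $\mf b=\mf a_k$, $\mf c=\mf a_{k+1}$ to land in $S_\delta^{(k+1)}G(R,\mf a_0\cdots\mf a_k,\mf a_{k+1})$, closing the induction. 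The genuine difficulty has already been absorbed into the preceding lemmas: Corollary~\ref{IndStep} packages Key Lemma~\ref{relkey} together with the properties of the enlarged elementary subgroup $\wt E_\delta$ from Lemma~\ref{tildeEdelta}, so the present theorem reduces to this routine induction.
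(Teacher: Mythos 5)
Your proposal is correct and takes essentially the same approach as the paper. The paper's three-line argument ("induction on $d$; base case is condition~\ref{DimFunc}(1); inductive step is Corollary~\ref{IndStep}") implicitly passes through exactly the auxiliary inclusion $[G(R,\mf a_0),\dots,G(R,\mf a_k)]\le S_\delta^{(k)}G(R,\mf a_0\cdots\mf a_{k-1},\mf a_k)$ that you prove explicitly, since the theorem for parameter $k$ applied over all rings with $\delta\le k$ is equivalent, by functoriality and the definition of $S_\delta^{(k)}G$, to that auxiliary containment over all rings; your write-up simply makes this step visible.
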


\begin{proof}
We prove the first formula by induction on $d$. If $d\le1$ the claim coincides with condition~\ref{DimFunc}(1).
The induction step follows from Lemma~\ref{IndStep}. As we already noticed after the definition of
the group $S_\delta^{(k)}G(R,\mf a,\mf b)$, if $\delta(R)\le d$, then
$S_\delta^{(d)}G(R,\mf a,\mf b)=\EE(R,\mf a,\mf b)$. Thus, the first formula implies the second one.
\end{proof}

The following straightforward corollary generalizes the results from~\cite{BakNonabelian,HazVav,BHVstrike}
obtained by A.\,Bak, R.\,Haz\-rat, and N.\,Va\-vi\-lov.

\begin{cor}
Let $\mf a_1,\dots,\mf a_d$ be ideals of a Noetherian ring $R$, where $d\ge1$.
Suppose that one of the following holds.
\begin{itemize}
\item
$G$ is simply connected and Bass--Serre dimension of $R$ is not greater than $d$.
\item
$G=\SL_n$, $n\ge3$, and Bass--Serre dimension of $R$ is not greater than $d+n-3$.
\end{itemize}

Then
$
[G(R,\mf a_0),G(R,\mf a_1),\dots,G(R,\mf a_d)]\le\EE(R,\mf a_0\dots\mf a_{d-1},\mf a_d).
$
\end{cor}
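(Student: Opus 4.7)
The plan is to reduce both bullets of the corollary to Theorem~\ref{NilpotentThm} by exhibiting, in each case, a dimension function $\delta=\delta_G$ (in the sense of Definition~\ref{DimFunc}) with $\delta(R)\le d$. Since Theorem~\ref{NilpotentThm} asserts precisely the desired commutator inclusion whenever such a $\delta$ is given, the whole argument collapses to checking that the hypothesis of each bullet supplies such a~$\delta$.

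For the first bullet, I would take $\delta=\operatorname{BS-dim}$. The paper has already asserted, immediately after Definition~\ref{DimFunc}, that Bass--Serre dimension is itself a dimension function for every simply connected Chevalley group, with the verification outsourced to~\cite{BakNonabelian,HazDim}. Granting this citation, $\delta(R)\le d$ is literally the hypothesis of the first bullet, and Theorem~\ref{NilpotentThm} fires off the conclusion at once.

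For the second bullet, I would use the Mason--Stothers shift $\delta(R)=\operatorname{BS-dim}(R)-n+2$. Conditions~\ref{DimFunc}(2) and~(3) are manifestly invariant under an integer shift of~$\delta$ (as noted explicitly in the paper), and Mason--Stothers~\cite{MasonStothers} was cited there as showing that this particular shift fulfills condition~\ref{DimFunc}(1) for $G=\SL_n$ with $n\ge3$. Under the hypothesis $\operatorname{BS-dim}(R)\le d+n-3$ we obtain $\delta(R)\le d-1\le d$, so Theorem~\ref{NilpotentThm} again delivers the desired commutator inclusion.

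The only nontrivial input is thus the black-boxed fact that Bass--Serre dimension (respectively its $\SL_n$-shift) is a dimension function in the sense of Definition~\ref{DimFunc}, and both are handled by the cited literature. Beyond that, the corollary really is, as billed, a straightforward bookkeeping exercise built on top of Theorem~\ref{NilpotentThm}, and no additional obstacle is anticipated.
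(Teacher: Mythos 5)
Your proof is correct and matches exactly what the paper intends: the corollary is labeled ``straightforward'' precisely because both bullets reduce to Theorem~\ref{NilpotentThm} once one cites the relevant dimension function ($\operatorname{BS-dim}$ in the simply connected case, its $n-2$ shift for $\SL_n$ via Mason--Stothers). Your arithmetic in the second bullet, giving $\delta(R)\le d-1\le d$, is right; the apparent slack of one unit is in the paper's statement, not in your argument.
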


\bigskip\bigskip
{\bf Acknowledgement}

\bigskip
The work is partially supported by RFBR projects~11-01-00811,
13-01-00709, 13-01-91150, and 13-01-92699, State Financed research task 6.38.74.2011 at the
St.Petersburg State University and by the Ministry of Education and Science of Russia
(Contracts 114031340002 and 2.136.2014/K).
The idea of this work was obtained in Lahore during visiting Abdus Salam School of
Mathematical Sciences at GC University.


\end{document}